\title{A polynomial-time algorithm to determine (almost) Hamiltonicity of dense regular graphs}
\author{Viresh Patel and Fabian Stroh}
\address{University of Amsterdam, Korteweg-de Vries Institute (KdVI), Amsterdam, The Netherlands.}
\email{\{V.S.Patel,F.J.M.Stroh\}@uva.nl}
\thanks{V. Patel and F. Stroh are supported by the Netherlands Organisation for Scientific Research (NWO) through the Gravitation Programme Networks (024.002.003) and the NWO TOP grant (613.001.601).}
\def\COMMENT#1{}
\def\NEWCHANGE#1{}
\newcommand{\vol}{\text{vol}}
\newcommand{\RN}{\text{RN}}
\newtheorem{theorem}{Theorem}[section]
\newtheorem{thm} {Theorem}
\newtheorem{thm2} {Theorem}
\newtheorem{lemma}[theorem]{Lemma}
\newtheorem{corollary}[theorem]{Corollary}
\newtheorem{proposition}[theorem]{Proposition}
\newtheorem{claim}[theorem]{Claim}
\theoremstyle{definition}
\newtheorem*{remark*}{Remark}
\newtheorem{remark}[theorem]{Remark}
\newtheorem*{problem*}{Problem}
\numberwithin{equation}{section}
\def\COMMENT#1{}
\begin{document}

\maketitle

\vspace{-0.5 cm}

\begin{abstract}
We give a polynomial-time algorithm for detecting very long cycles in dense regular graphs. Specifically, we show that, given $\alpha\in (0,1)$, there exists a $c=c(\alpha)$ such that the following holds: there is a polynomial-time algorithm that, given a $D$-regular graph $G$ on $n$ vertices with $D\geq \alpha n$, determines whether $G$ contains a cycle on at least $n - c$ vertices. 
%If such a cycle exists, we give a (randomised) polynomial-time algorithm to find it. 
%The algorithm has an expected runtime polynomial in $n$, and 
The problem becomes NP-complete if we drop either the density or the regularity condition.
The algorithm combines tools from extremal graph theory and spectral partitioning as well as some further algorithmic ingredients.
\end{abstract}

\section{Introduction}
\label{sec:intro}

The study of Hamilton cycles in graphs is a classical part of graph theory. It has been studied intensely from structural, extremal and algorithmic perspectives and is especially relevant due to its connection with the travelling salesman problem. A Hamilton cycle in a graph is a spanning cycle, i.e.\ a cycle that contains every vertex of a graph. This paper is concerned with the algorithmic question of determining whether a graph contains an (almost) Hamilton cycle. The Hamiltonicity problem is NP-hard in general \cite{Garey}, and so there is a  lot of interest in understanding the problem for restricted graph classes. In this paper, we will focus on dense graphs, that is graphs in which the minimum degree is linear in the number of vertices. 

Dirac's theorem \cite{dirac1952some} guarantees the existence of a Hamilton cycle in any $n$-vertex graph of minimum degree at least $n/2$, so this immediately gives a (trivial) algorithm to determine existence in such graphs (and its proof also gives a polynomial-time algorithm for finding a Hamilton cycle). On the other hand, for each $\varepsilon>0$, the problem of determining Hamiltonicity in $n$-vertex graphs of minimum degree $(\frac{1}{2} - \varepsilon)n$ is NP-complete \cite{de1999approximation} (see also Proposition~\ref{pr:hard}).
Our main result, given below, shows that the situation is quite different if we also insist the graphs are regular: we show that determining almost Hamiltonicity in dense regular graphs is polynomial-time solvable.
\begin{thm}
\label{thm:main}
	For every $\alpha\in(0,1]$,  there exists $c = c(\alpha) = 100 \alpha^{-2}$ and a (deterministic) polynomial-time algorithm that, given an $n$-vertex $D$-regular graph $G$  with $D\geq \alpha n$ as input, determines whether $G$ contains a cycle on at least $n - c$ vertices. 
Furthermore	there is a (randomised) polynomial-time algorithm to find such a cycle if it exists.	
\end{thm}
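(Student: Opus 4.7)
The plan is to combine a spectral dichotomy with an extremal structural analysis of $G$. First, if $\alpha>1/2$, Dirac's theorem and its standard constructive proof (iterated rotation--extension on an initial path) deliver a Hamilton cycle in polynomial time, so assume throughout that $\alpha\le 1/2$. I would begin by computing the second-largest eigenvalue $\lambda_2$ of the adjacency matrix $A(G)$ in polynomial time. If the spectral gap $D-\lambda_2\ge\eta(\alpha)n$ for a suitable $\eta(\alpha)>0$, then the expander mixing lemma implies that $G$ is a robust expander in the sense of K\"uhn--Osthus, hence Hamiltonian; the rotation--extension proof, carried out from a random initial path, is a randomised polynomial-time algorithm that actually produces the cycle, so the algorithm outputs ``yes''.

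Otherwise $G$ has a small spectral gap, and spectral partitioning via Cheeger's inequality produces in polynomial time a cut $(A,B)$ with $e(A,B)\le\delta(\alpha)n$ and both parts nontrivial. Double counting gives $2e(A)=D|A|-e(A,B)$, and combining this with $e(A)\le\binom{|A|}{2}$ forces $|A|,|B|\ge(1-o(1))D\ge(1-o(1))\alpha n$. Applying this step iteratively to any part that is still internally non-expanding, and terminating after at most $k\le\lceil 1/\alpha\rceil$ iterations (any more parts would violate the $D$-regularity and density), I obtain a partition $V(G)=V_1\cup\cdots\cup V_k$ into sets of size roughly $D+1$, each inducing a near-clique, with only $O(\delta n)$ edges crossing between the parts. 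This is the extremal structure I then exploit.

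Given the near-clique decomposition, the near-Hamilton cycle problem reduces to a polynomial-time solvable matching/flow question. Any cycle covering all but $c$ vertices uses only the crossing edges to move between near-cliques and must cover each $V_i$ by a small number of vertex-disjoint paths, which can always be realised inside a near-clique on $m$ vertices whenever the prescribed endpoints are distinct and the number of paths is at most $m/2$. Existence of such a cycle therefore corresponds to the existence of an Euler-tour-like structure on the auxiliary multigraph whose vertices are the $V_i$ and whose edges are the crossing edges, with the correct parity/matching constraints at each $V_i$; this can be decided by bipartite matching or maximum flow. The hardest step I foresee is the structural analysis: proving a sharp quantitative stability theorem that a non-expanding dense regular graph must decompose into near-cliques with only $O(\alpha^{-2})$ total defect vertices, and carrying out the accounting carefully enough to yield the explicit constant $c=100\alpha^{-2}$ (a natural source of the $\alpha^{-2}$ is that one has $\le 1/\alpha$ near-cliques each contributing $O(1/\alpha)$ defect vertices). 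A secondary technical issue is absorbing the few exceptional, non-clique-conforming vertices into the final cycle, which likely requires a bespoke local adjustment argument.
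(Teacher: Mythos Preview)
Your outline has a genuine structural gap: you assume that after iterated spectral partitioning the parts are ``near-cliques'' of size roughly $D+1$, but this is false in general. A dense regular graph can just as well decompose into pieces that are close to \emph{complete bipartite} graphs of size roughly $2D$; for instance, take a few disjoint copies of $K_{D,D}$ and sprinkle a small number of edges between them to restore regularity. Cheeger's inequality alone cannot distinguish a near-clique from a near-bipartite piece (both have good edge expansion), which is why the paper also invokes Trevisan's bipartite analogue (Theorem~\ref{th:trevisan}) and ends up with a robust partition whose parts are either robust expanders \emph{or} bipartite robust expanders with a specified bipartition $A_j,B_j$.

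Missing the bipartite case is not a cosmetic omission: it is precisely what generates the obstruction and the constant $c$. In a bipartite robust expander component $W_j=A_j\cup B_j$, any long cycle restricted to $G[A_j,B_j]$ must alternate sides, so the path system supplying the crossing edges has to correct the imbalance $||A_j|-|B_j||$; this is the \emph{balancing} condition in the paper, handled via the Gruslys--Letzter lemma (Theorem~\ref{th:balance}). Your Euler-tour/matching reduction captures only the \emph{connecting} condition and would wrongly declare a long cycle in, say, a graph built from slightly imbalanced near-$K_{D,D}$ pieces. In the paper the constant $c=100\alpha^{-2}$ does not arise from ``defect vertices per near-clique'' but from the fact that a connecting path system needs at most $m^2-m$ edges (with $m\le 2/\alpha$), and merging it with a balancing path system costs at most $5m^2+m-1$ in the balance, which is the number of vertices the final cycle may miss.
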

Note that the problem of determining the existence of a very long cycle (as in the result above) becomes NP-complete if we drop either the density or the regularity condition on $G$; see Proposition~\ref{pr:hard}. The question of whether Theorem~\ref{thm:main} holds for $c= c(\alpha) = 0$ (i.e. the Hamilton cycle problem) remains open and is discussed in Section~\ref{sec:conc}. Also, see Remark~\ref{rem:runtime} for a discussion of the explicit running time of the algorithm.

Arora, Karger, and Karpinski \cite{AroraSTOC, Arora} initiated the systematic study of NP-hard problems on dense graphs and this continues to be an active area of research.
The closest result to ours (to the best of our knowledge) is an approximation algorithm for the longest cycle problem in dense (not necessarily regular) graphs that is due to Csaba, Karpinski and Krysta \cite{Csaba2002Approx}. For each $\alpha \in (0, 1/2)$, they give a polynomial-time algorithm which, given an $n$-vertex graph $G$ of minimum degree $\alpha n$, finds a cycle of length at least $(\frac{\alpha}{1 - \alpha}) \ell$, where $\ell$ is the length of the longest cycle in $G$.\footnote{The actual approximation ratio here is $(\frac{\alpha}{1 - \alpha}) - \varepsilon$ for arbitrarily small $\varepsilon$. As mentioned, for $\alpha \geq 1/2$, Dirac's theorem gives a trivial algorithm for the longest cycle problem.} They also show one cannot replace $(\frac{\alpha}{1 - \alpha})$ with $(1 - \varepsilon_0(1-2\alpha))$ where $\varepsilon_0 = 1/742$ unless $P = NP$. The two algorithms are not directly comparable: while theirs works on all dense graphs, ours achieves a much better approximation ratio for dense regular Hamiltonian graphs. In Section~\ref{sec:conc}, we discuss how our methods can be used for the longest cycle problem to achieve an approximation ratio very close to one for general dense regular graphs.
%Their algorithm works on a larger class of graphs than ours while ours is more effective at finding very long cycles (when they exist) in dense regular graphs. The two results are not directly comparable, but this illustrates again the contrast between the regular and non-regular cases
%
%So, in the case of dense (not necessarily regular) graphs, there is probably no hope for a polynomial-time algorithm to find very long (or even long) cycles and moreover there remains a large gap between the best algorithmic and best hardness results. This is in  contrast to Theorem~\ref{} for the case of dense regular graphs.

Our algorithm is inspired by questions and results about Hamiltonicity in extremal graph theory; here one is interested in various types of conditions that guarantee Hamiltonicity such as in Dirac's theorem. 
%Indeed the first such result of Dirac \cite{dirac1952some} guarantees a Hamilton cycle in any $n$-vertex graph of minimum degree at least $n/2$. This gives a trivial algorithm for determining Hamiltonicity in such graphs and the constructive proof of Dirac's theorem also gives a polynomial-time algorithm for finding the Hamilton cycle. 
There are two extremal examples that show $n/2$ is tight in Dirac's theorem: a slightly imbalanced complete bipartite graph and a graph consisting of two disjoint cliques. One might hope to eliminate these barriers to Hamiltonicity by imposing some connectivity and regularity conditions. 
In this direction, Bollob\'as \cite{bollobasconj} and H{\"a}ggkvist (see \cite{jackson1980hamilton}) independently conjectured that a $t$-connected regular graph with degree at least $n/(t+1)$ is Hamiltonian. 
Jackson \cite{jackson1980hamilton} proved the conjecture for $t=2$, while Jung \cite{Jung} and Jackson, Li, and Zhu \cite{JacksonLiZhu} gave an example showing the conjecture does not hold for $t \geq 4$. Finally, K\"uhn, Lo, Osthus, and Staden \cite{kuhn2014robust, kuhn2016solution} resolved the conjecture by proving the case $t=3$ asymptotically.
Although the conjecture does not hold in general, it suggests that questions of Hamiltonicity (and long cycles) might be easier in some sense for (dense) regular graphs, and our result seems to confirm this.

%While it holds for $t=3$ and large $n$, it does not hold in general. The example in Figure~\ref{fig:ex1} has degree just under $n/4$ and can be made to have arbitrarily large connectivity. However, it is not Hamiltonian: it's longest cycle misses one vertex. 

%\begin{figure}
%	\centering
%	\includegraphics[width=0.9\textwidth]{examples1.pdf}
%	\caption{Left: The graph consists of three almost-cliques and two vertices. It is $2$-connected and has degree just below $n/3$ but has no Hamilton cycle. Right: the $D$-regular graph consists of two almost-cliques $C_1$ and $C_2$ as well as an almost balanced bipartite part with bipartition $A,B$. $C_1$ and $C_2$ have $D+1$ vertices each, $A$ has $D$ vertices and $B$ has $D-1$. Vertices in $B$ are connected to all vertices in $A$, vertices in $A$ are connected to all vertices in $B$ and one vertex in either $C_1$ or $C_2$. The graph is not Hamiltonian, as removing the $D$ vertices in $A$ leaves $D+1$ connected components.}
%	\label{fig:ex1}
%\end{figure}

Our algorithm relies heavily on the notion of robust expansion, a notion of expansion for dense (directed) graphs introduced and applied by K\"uhn and Osthus together with several co-authors to resolve and make progress on a number of long-standing conjectures in extremal graph theory; see for example \cite{Rob0,Rob2,Rob4,Robb2}. In particular, K\"uhn, Lo, Osthus and Staden \cite{kuhn2014robust, kuhn2016solution}, in their proof of the $t=3$ case of the Bollob{\'a}s-H{\"a}ggkvist conjecture showed that all dense regular graphs have a vertex partition into a small number of parts where each part induces a (bipartite) robust expander. This decomposition is central to our algorithm, and by combining their argument with some spectral partitioning techniques, we are able to construct such a partition algorithmically in polynomial time; this may be of independent interest.
A further by-product of this is that we can partially answer a question of K{\"u}hn and Osthus~\cite{Robb2} about algorithms to check whether a graph is a robust expander in polynomial time; this result and its background are presented in Section~\ref{sec:digression} after robust expansion has been formally defined.

Once we have the algorithm for constructing the robust expander partition, we will also require a result of Letzter and Gruslys \cite{CyclePartitions} for finding certain structures between the parts in this partition. Combining all of this with some further algorithmic ingredients will yield the desired algorithm.

Below we give a more detailed account of our algorithm as well as the proof of the hardness results (Proposition~\ref{pr:hard}) mentioned above.
In Section~\ref{sec:notation} we give some general notation and we formally define robust expansion, as well stating some of the results from spectral graph theory that we will need in later sections.
In Section~\ref{sec:robustparts}, we give  the algorithm for finding the robust expander partition mentioned above, and Section~\ref{sec:findingcycles} is about utilizing the structure of a robust partition to find a long cycle. This is where the proof of Theorem~\ref{thm:main} is given.

\subsection{Proof outline}

We now go into more detail about our algorithm. 
The first step of the algorithm,  given in Section~\ref{sec:robustparts}, is to obtain a so-called robust partition of our graph. This is a vertex partition in which each part induces a robust expander or a bipartite robust expander and where there are few edges between parts. We give the precise definitions below, but informally we can think of (bipartite) robust expanders as dense (bipartite) graphs with good connectivity properties that are resilient to small alterations. In \cite{kuhn2014robust}, it was shown that such a robust partition exists for dense regular graphs, and crucially, the number of parts is independent of the number of vertices and depends only on the density. The idea of the proof in \cite{kuhn2014robust} is to iteratively refine the vertex partition as follows. Given a vertex partition $\mathcal{P} = \{U_1, \ldots, U_k\}$, if some $U_i$ is not a (bipartite) robust expander, then they show there exists a  partition $U_i = A \cup B$ of $U_i$ where there are few edges between $A$ and $B$;  $U_i$ is then replaced with $A,B$ in $\mathcal{P}$ and this is repeated with the new partition. This process must end after a finite number of steps since the density inside parts increases at each step (since there were not many edges between $A$ and $B$). We follow this argument closely, except that the existence of $A,B$ is not enough for us: we need a polynomial-time algorithm to find $A$ and $B$. We make use of spectral algorithms to achieve this.

In the second step, given in Section~\ref{sec:findingcycles}, we make use of the robust partition to decide whether a very long cycle exists.
Using further results from \cite{kuhn2014robust}, it turns out that a very long cycle exists if and only if a certain type of structure exists between the parts of our robust partition. With the help of a result from \cite{CyclePartitions}, we give a fast algorithm to determine whether such a structure is present in our graph and to find it if it is. We will give a more detailed sketch of this at the start of Section~\ref{sec:findingcycles}.

%We exploit two important properties of robust partitions: Firstly, their structure allows embedding spanning subgraphs efficiently, with the caveat that the parts in bipartite robust expanders need to have the same size. Secondly, as there are finitely many robust components the problem of connecting them up to build a cycle becomes a finite problem. We use a result by \cite{CyclePartitions} to help ensure the bipartite robust expanders are of the same size. The problem of determining if a long cycle exists effectively boils down to whether we can connect the robust parts up cleanly. 
%

%One key ingredient is the decomposition into (bipartite) robust components. We give the precise definition below, but we can think of (bipartite) robust expanders as dense (bipartite) graphs that have properties resilient to small alterations. K\"uhn  et al. \cite{kuhn2010hamiltonian} introduced the concept and showed \cite{kuhn2014robust} that any regular graph of degree at least (roughly) $\alpha n$ has a decomposition into at most $1/\alpha$ robust components, where bipartite robust components count as two. Notably, the maximum number of robust components depends on $α$, not on $n$. We give an algorithm that finds such a robust partition using Fiedler's algorithm and its bipartite analogue. The algorithm follows the same rough idea as the proof of existence in \cite{kuhn2014robust}, iteratively refining a partition.  

We end this subsection by proving the simple hardness results mentioned earlier in the introduction.

\begin{proposition}
\label{pr:hard}
For each fixed integer $C \geq 0$ and each real $\alpha \in (0, 1/2)$ the following holds.
\begin{itemize}[noitemsep]
\item[{\rm (i)}] The problem of deciding whether a regular $n$-vertex graph has a cycle of length at least $n - C$ is NP-complete.
\item[{\rm (ii)}] The problem of deciding whether an $n$-vertex graph of minimum degree at least $\alpha n$ has a cycle of length at least $n - C$ is NP-complete.
\end{itemize}
\end{proposition}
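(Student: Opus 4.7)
Membership in NP for both parts is immediate, since a proposed cycle of length at least $n-C$ can be verified in polynomial time. The substance of the proof is the NP-hardness, which I would argue separately for each of the two parts.

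For part~(i), when $C=0$ the problem coincides with Hamiltonicity of regular graphs, which is NP-hard already for cubic graphs. For $C\geq 1$, I plan to reduce from the (also NP-hard) variant of deciding, given a cubic graph $G$ and a specified edge $e=uv$, whether $G$ admits a Hamilton cycle through $e$; the NP-hardness of this variant follows from NP-hardness of Hamiltonicity in cubic graphs by trying all $O(n)$ edges. The main construction is a \emph{necklace}: take $C+1$ vertex-disjoint copies $G_1,\ldots,G_{C+1}$ of $G$, delete the corresponding copy $e_i=u_iv_i$ of $e$ in each copy, and add bridge edges $v_iu_{i+1}$ for $i=1,\ldots,C+1$ (indices modulo $C+1$). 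The result $G'$ is $3$-regular on $(C+1)n$ vertices. Since each copy has exactly two incident bridges (one at $u_i$, one at $v_i$), any cycle in $G'$ that visits more than one copy must traverse every copy $G_i$ via some $u_i$-$v_i$ path in $G_i-e_i$ of vertex count $k_i\leq n$. A cycle of length $\geq (C+1)n-C$ then satisfies $\sum_{i=1}^{C+1}(n-k_i)\leq C$, so by pigeonhole $k_i=n$ for some $i$, yielding a Hamilton $u$-$v$ path in $G-e$ and hence a Hamilton cycle through $e$ in $G$. The converse direction is immediate by concatenating copies of such a Hamilton cycle across the necklace.

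For part~(ii), the case $C=0$ is exactly the theorem of de~Werra, Demange and Paschos~\cite{de1999approximation}. For $C\geq 1$, my plan is to adapt the necklace construction to dense graphs, starting from a graph $G$ of minimum degree at least $\alpha' n$ with $\alpha<\alpha'<1/2$ for which Hamiltonicity-through-a-specified-edge is NP-hard (using an intermediate density $\alpha'$ close to $1/2$). The naive $(C+1)$-copy necklace dilutes the density by a factor of roughly $C+1$, so I would repair this either by attaching a clique $K_m$ of size $m=\Theta(n)$ fully joined to the necklace, or by adding carefully chosen inter-copy edges, until the minimum degree exceeds $\alpha n'$ where $n'$ is the new vertex count. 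One then reanalyses the cycle structure of the augmented graph to verify that any cycle of length $\geq n'-C$ must still traverse the necklace in the structured way and thus yield a Hamilton cycle through $e$ in one of the copies of $G$.

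The main obstacle lies in part~(ii): ensuring that the density-repairing gadget does not itself create spurious long cycles that bypass one of the copies of $G$ and avoid the Hamiltonicity requirement. In part~(i) this issue does not arise, because regularity is preserved cleanly by vertex-disjoint copies and bridges, whereas restoring density $\alpha n$ forces us to add many extra edges that could potentially be used as shortcuts. An alternative (and possibly cleaner) route is to observe that the reduction of de~Werra--Demange--Paschos already produces a Hamiltonicity gap in the constructed instance, from which NP-hardness of the long-cycle problem for each fixed $C$ follows with only minor modifications.
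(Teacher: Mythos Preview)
Your argument for part~(i) is correct and is a genuinely different construction from the one in the paper. The paper does not use a necklace: instead it takes a disjoint union of the cubic instance $G$ with a small auxiliary $3$-regular graph on roughly $C$ vertices, so that $G$ is Hamiltonian if and only if the union has a cycle of length at least $n-C$. To handle the small values of $C$ (where no $3$-regular padding graph of the right parity exists), the paper first replaces each vertex of $G$ by a triangle, exploiting that in the resulting cubic graph the existence of cycles of lengths $|V(G')|$, $|V(G')|-1$, $|V(G')|-2$ are all equivalent. Your necklace with the pigeonhole on $\sum (n-k_i)\le C$ is arguably cleaner, handles all $C\ge 1$ uniformly, and avoids the case analysis; the paper's approach has the minor advantage of not needing the ``Hamiltonicity through a specified edge'' variant.

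Part~(ii), however, is not a proof but a plan, and you yourself identify the obstacle: once you glue a large clique (or many inter-copy edges) onto the necklace to restore minimum degree $\alpha n'$, you create many alternative routes that can bypass an entire copy of $G$, and it is not at all clear how to rule out cycles of length $\ge n'-C$ that do so. The paper sidesteps this difficulty completely with a much simpler reduction, from \emph{Hamilton path} rather than Hamilton cycle. Given an arbitrary $k$-vertex graph $G$, it builds $H$ by starting from a complete bipartite graph with parts $A$, $B$ where $|A|=1+r$, $|B|=(C+1)k+r$ and $r$ is chosen large enough that $|A|/|V(H)|>\alpha$; it then places $C+1$ vertex-disjoint copies of $G$ inside $B$. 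The minimum degree condition is automatic from the complete bipartite backbone, with no repair needed. The point is that any cycle in $H$ enters and leaves $B$ at most $|A|$ times, so it traces out at most $|A|$ paths inside $B$; but $B$ contains $C+1$ disjoint copies of $G$ together with $r$ isolated vertices, so a cycle covering all but $C$ vertices of $H$ must cover at least one copy of $G$ entirely by a single such path, forcing a Hamilton path in $G$. The converse is immediate. This is the missing idea in your part~(ii): rather than fighting to keep a necklace rigid while adding density, use a complete bipartite scaffold that supplies the density for free and whose small side acts as a bottleneck limiting how many separate excursions a long cycle can make into the side containing the copies of $G$.
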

\begin{proof}
For part (i), it is known that the problem of determining Hamiltonicity of $3$-regular graphs is NP-complete \cite{garey1976planar}. Fix $C$ even with $C \geq 4$.
Given a $3$-regular graph $G$, let $H$ be the disjoint union of $G$ with an arbitrary $3$-regular graph on $C$ vertices and assume $H$ has $n$ vertices. Then $G$ has a Hamilton cycle if and only if $H$ has a cycle of length at least $n-C$ and so a polynomial-time algorithm for the problem in part (i), for $C$ even and at least $4$, would give a polynomial-time algorithm for deciding Hamiltonicity in $3$-regular graphs.

For the remaining cases of $C$, given a $3$-regular graph $G$, consider the $3$-regular graph $G'$ on $3|V(G)|$ vertices obtained from $G$ by replacing each vertex of $G$ with a triangle in such a way that we recover $G$ by contracting each triangle to a vertex. It is not hard to see that the following are equivalent:
\begin{itemize}
\item[] $G$ has a Hamilton cycle;
\item[] $G'$ has a Hamilton cycle;
\item[] $G'$ has a cycle of length $|V(G')| - 1$;
\item[] $G'$ has a cycle of length $|V(G')| - 2$.
\end{itemize}
For $C =1$ and $C \geq 4$ odd, let $H$ be the disjoint union of $G'$ with an arbitrary $3$-regular graph on $C-1$ (even) vertices and for $C=2$ let $H = G'$. Then $H$ has a cycle of length at least $n - C$ if and only if $G$ has a Hamilton cycle. 

(ii) We reduce to the problem of deciding the existence of a Hamilton path in general graphs, which is known to be NP-complete \cite{Garey}. Given a graph $G$ on $k$ vertices, construct the graph $H$ as follows. Start by taking a complete bipartite graph with bipartition $V(H) = A \cup B$ where $|A| = 1 + r$ and $|B| = (C+1)k + r$ and $r$ is chosen so that  $|A| / (|A| + |B|) > \alpha$. Now we insert $C+1$ disjoint copies of $G$ into $B$ to form $H$. Note that $\delta(H) \geq \alpha |V(H)|$ and it is easy to see that $H$ has a cycle of length $|V(H)| - C$ if and only if $G$ has a Hamilton path. This gives the desired reduction since $|V(H)|$ is linear in $|V(G)|$.
\end{proof}

\section{Preliminaries}
\label{sec:notation}

We follow general graph theory notation found e.g.\ in \cite{diestel}.

Given a graph $G$, we denote its vertex and edge sets by $V(G)$ and $E(G)$ respectively. For a vertex $v \in V(G)$, we write $N(v)$ for the neighbours of $v$ in $G$ and write $d(v):=|N(v)|$ for the degree of $v$. Given $S \subseteq V(G)$, we also write $d_S(v):= |N(v) \cap S|$ for the degree of $v$ in $S$. We denote with $\delta(G)$ the smallest degree among vertices in $G$.

We write $H \subseteq G$ to mean that $H$ is a subgraph of $G$, i.e.\ $V(H) \subseteq V(G)$ and $E(H) \subseteq E(G)$.
We define $E_G(S):=\{ab \in E(G) \mid a,b \in S \}$ and
we write $G[S]$ for the graph induced by $G$ on $S$, i.e. the graph with vertex set $S$ and edge set $E_G(S)$.
	For $S,T\subseteq V(G)$, we define $E_G(S,T):=\{xy\in E(G)\mid x\in S, y\in T\}$ and $e_G(S,T):=|E_G(S,T)|$. We will sometimes omit the subscript if it is clear.  For $S,T\subseteq V(G)$ disjoint,  we write $G[S,T] := (S \cup T, E_G(S,T))$ for the bipartite graph induced by $G$ between $S$ and $T$.
We often denote the complement of $S \subseteq V(G)$ by $\overline{S}$ i.e.\ $\overline{S}:= V(G) \setminus S$.

We write $a \ll b$ to mean that $a \leq f(b)$ for some implicitly given non-decreasing function $f: (0,1] \rightarrow (0,1]$.	Informally, this is understood to mean that $a$ is small enough in relation to $b$. We sometimes also write $a \ll_f b$ when we wish to be specific about the function $f$.

\subsection{Spectral partitioning}

%Let $G=(V,E)$ be a graph. For $S\subseteq V$, write $\overline{S}=V\setminus S$ and $e_G(S,\overline{S}) = |\{ab\in E\mid a\in S, b\in \overline{S}\}|$. 
Given a graph $G$ and $S \subseteq V(G)$, 
the \textit{conductance} of $S$, written $\Phi(S) = \Phi_G(S)$, is given by 
$$\Phi(S) := \frac{e_G(S,\overline{S})}{\min(\vol_G(S),\vol_G(\overline{S}))},$$
where $\vol_G(S) = \vol(S):=\sum_{i\in S}d(i)$ refers to the volume of $S$.
The \emph{edge expansion} $\Phi(G)$ of $G$ is defined by $\Phi(G) := \min_{S\subseteq V(G)}\Phi(S)$. 

We write $A_G \in \mathbb{R}^{V(G) \times V(G)}$ for the adjacency matrix of $G$, where $A_G$ is the matrix whose rows and columns are indexed by vertices of $G$ and is defined by
\begin{align*}
(A_G)_{uv} := 
\begin{cases}
1 &\text{if } uv \in E(G); \\
0 &\text{otherwise}.
\end{cases}
\end{align*}
We write
$$ L_G := I-D^{-\frac{1}{2}}A_GD^{-\frac{1}{2}} $$ for the normalized Laplacian of $G$, where $I \in \mathbb{R}^{V(G) \times V(G)}$ is the identity matrix and $D$ is the diagonal matrix of degrees (where $D_{uu} = d(u)$ for each $u \in V(G)$ and $D_{uv}=0$ for $u \not= v$). 

%It is well known that the eigenvalues of $L_G$ lie in the interval $[0,2]$; see e.g.\ \cite{godsil2013algebraic}\COMMENT{VP: add citation, check e.g.\ Godsil/Royle; FS: It seems I don't have access to that book right now.}. 
Suppose the eigenvalues of $L_G$ are ordered $\lambda_1 \leq \lambda_2 \leq \ldots \leq \lambda_n$.
The following gives an algorithm for approximating the expansion of $G$ and giving a corresponding partition of the vertices.
\begin{theorem}[\cite{alon1986eigenvalues}, \cite{trevisan2012max}] %TODO fiedler's algorithm?
	\label{th:cheeger} For any graph $G$, 
	we have $\frac{\lambda_2}{2}\leq \Phi(G) \leq \sqrt{2 \lambda_2}$ and there is an algorithm that finds $S\subseteq V$ such that $\Phi(S) \leq \sqrt{2 \lambda_2}$ in time polynomial in $n=|V(G)|$. In particular, $\Phi(G) \geq \Phi(S)^2/4$. 
\end{theorem}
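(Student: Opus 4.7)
The plan is to establish Cheeger's inequality in two directions and then extract the polynomial-time algorithm from the constructive ``hard'' direction.

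For the easy direction $\lambda_2/2 \leq \Phi(G)$, I would use the Courant--Fischer variational characterization of $\lambda_2$. Since the all-ones vector scaled by $D^{1/2}$ is an eigenvector of $L_G$ with eigenvalue $0$, one has $\lambda_2 = \min \{ f^T L_G f / f^T f : f \perp D^{1/2}\mathbf{1},\ f\neq 0\}$. Equivalently, writing $g = D^{-1/2}f$, this becomes $\lambda_2 = \min \sum_{uv\in E}(g_u-g_v)^2 / \sum_u d(u) g_u^2$ over $g$ orthogonal to $D\mathbf{1}$. Given an optimal $S$ with $\vol(S) \leq \vol(\overline S)$, I plug in the test vector $g_u = 1/\vol(S)$ for $u\in S$ and $g_u = -1/\vol(\overline S)$ for $u\in\overline S$. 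A short computation shows the numerator equals $e(S,\overline S)(1/\vol(S) + 1/\vol(\overline S))^2$ and the denominator equals $1/\vol(S) + 1/\vol(\overline S)$, yielding $\lambda_2 \leq 2\Phi(S) = 2\Phi(G)$.

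For the hard direction $\Phi(G) \leq \sqrt{2\lambda_2}$ (and the algorithm), I would use the standard sweep-cut argument on the second eigenvector. Compute $v$ with $L_G v = \lambda_2 v$ and set $g := D^{-1/2}v$; note $g \perp D\mathbf{1}$ and $\sum_{uv\in E}(g_u-g_v)^2 = \lambda_2 \sum_u d(u)g_u^2$. Shift $g$ by a constant so that the ``median'' vertex (weighted by $d(u)$) has value $0$, i.e.\ $\vol(\{u:g_u>0\}), \vol(\{u:g_u<0\}) \leq \vol(V)/2$; this only decreases the denominator. Split $g$ into positive and negative parts and work with whichever contributes more to the Rayleigh quotient. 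After relabeling, sort the relevant vertices so that $g_{u_1}^2 \geq g_{u_2}^2 \geq \cdots$ and consider the $n$ sweep sets $S_i := \{u_1,\dots,u_i\}$. The algorithm simply outputs whichever $S_i$ minimizes $\Phi(S_i)$, and correctness will follow by showing $\min_i \Phi(S_i) \leq \sqrt{2\lambda_2}$.

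The technical heart, which I expect to be the main obstacle, is the Cauchy--Schwarz calculation bounding $\min_i \Phi(S_i)$. The classical trick is: if $\min_i \Phi(S_i) > \phi$ for all $i$, then a telescoping identity writes $\sum_{uv\in E}|g_u^2-g_v^2|$ as $\sum_i e(S_i,\overline{S_i})(g_{u_i}^2 - g_{u_{i+1}}^2)$, which exceeds $\phi \sum_i \vol(S_i)(g_{u_i}^2-g_{u_{i+1}}^2) = \phi \sum_u d(u) g_u^2$. On the other hand, factoring $|g_u^2-g_v^2| = |g_u-g_v|\cdot|g_u+g_v|$ and applying Cauchy--Schwarz bounds the left side by $\sqrt{\sum_{uv\in E}(g_u-g_v)^2}\cdot\sqrt{\sum_{uv\in E}(g_u+g_v)^2} \leq \sqrt{\lambda_2 \sum_u d(u)g_u^2}\cdot\sqrt{2 \sum_u d(u)g_u^2}$. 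Combining gives $\phi \leq \sqrt{2\lambda_2}$, which is exactly the required bound; the final inequality $\Phi(G) \geq \Phi(S)^2/4$ is a direct algebraic consequence.

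The algorithmic content is immediate: computing $v$ to sufficient precision reduces to approximate eigenvector computation (doable in polynomial time via, e.g., power iteration on a shifted matrix), sorting takes $O(n\log n)$, and evaluating $\Phi(S_i)$ for each prefix can be done incrementally in total time $O(n+m)$. Care is needed only in handling approximate eigenvectors — either cite a standard robust version of Cheeger's inequality that tolerates $\varepsilon$-approximate eigenvectors, or note that inverse polynomial precision suffices since all the inequalities above are robust to small perturbations of $g$.
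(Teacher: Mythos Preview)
The paper does not provide its own proof of this statement: Theorem~\ref{th:cheeger} is quoted as a preliminary result from \cite{alon1986eigenvalues} and \cite{trevisan2012max}, so there is nothing in the paper to compare against. Your outline is the standard textbook proof of Cheeger's inequality (variational test vector for the easy direction, Fiedler sweep plus the Cauchy--Schwarz/telescoping trick for the hard direction), and it is essentially correct and matches what one finds in the cited references.

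One small slip: after the median shift you write ``this only decreases the denominator.'' In fact, since $g=D^{-1/2}v$ already satisfies $\sum_u d(u)g_u=0$, shifting by any constant $c$ gives $\sum_u d(u)(g_u-c)^2 = \sum_u d(u)g_u^2 + c^2\vol(V)$, so the denominator \emph{increases}; this is what you want (the Rayleigh quotient does not increase), so the conclusion is unaffected but the wording is backwards. Everything else --- the split into positive/negative parts, the telescoping identity $\sum_{uv\in E}|g_u^2-g_v^2|=\sum_i e(S_i,\overline{S_i})(g_{u_i}^2-g_{u_{i+1}}^2)$, the Cauchy--Schwarz bound, and the observation that $\Phi(G)\ge \Phi(S)^2/4$ follows by chaining $\Phi(S)\le\sqrt{2\lambda_2}$ with $\lambda_2\le 2\Phi(G)$ --- is fine.
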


The inequality
$\frac{\lambda_2}{2}\leq \Phi(G) \leq \sqrt{2 \lambda_2}$ is often referred to as Cheeger's inequality.
There is an analogue of Cheeger's inequality for the largest eigenvalue $\lambda_n$ and the \emph{bipartiteness ratio} $\beta(G)$. For $y\in \{-1,0,1\}^{V(G)}\setminus \{\textbf{0}\}$ we define
$$\beta(y):= \frac{\sum_{uv\in E(G)} |y_u+y_v|}{\sum_{v\in V(G)} d_G(v)  |y_v|}  $$
and $\beta(G):= \min_{y\in \{-1,0,1\}^V\setminus \{\textbf{0}\}} \beta(y)$. We can think of a small value $\beta(G)$ to mean that $G$ is close to bipartite. In particular, if we set $A = \{v \in V(G): y_v = 1\}$ and $B = \{v \in V(G): y_v = -1\}$ then 
\begin{equation}
\label{eq:beta}
\beta(y)= \frac{2e_G(A) + 2e_G(B) + e_G(A\cup B, V(G) \setminus (A \cup B))} {\vol_G(A \cup B)}.
\end{equation}

\begin{theorem}[\cite{trevisanLectureNotes,trevisan2012max}]
	\label{th:trevisan} For any graph $G$, 
	we have $\frac{2-\lambda_n}{2}\leq\beta(G)\leq\sqrt{2(2-\lambda_n)}$ and there is an algorithm that finds $y\in \{-1,0,1\}^{V(G)}$ such that $\beta(y)\leq\sqrt{2(2-\lambda_n)}$ in time polynomial in $n=|V(G)|$. In particular, $\beta(G) \geq \beta(y)^2 /4$
\end{theorem}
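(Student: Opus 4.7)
The plan is to mirror the usual proof of Cheeger's inequality (Theorem~\ref{th:cheeger}), but applied to the largest eigenvalue of the normalised Laplacian instead of the second smallest. The starting point is the Rayleigh characterisation
\begin{equation*}
2 - \lambda_n \;=\; \min_{x \neq 0} \frac{x^{T}(2I - L_G) x}{x^{T} x} \;=\; \min_{y \neq 0} \frac{\sum_{uv \in E(G)} (y_u + y_v)^{2}}{\sum_{v \in V(G)} d(v)\, y_v^{2}},
\end{equation*}
which I would derive by substituting $x = D^{1/2} y$ and expanding $y^{T}(D + A_G)y = \sum_{uv \in E}(y_u+y_v)^{2}$.

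For the easier inequality $\tfrac{2-\lambda_n}{2} \leq \beta(G)$, I would plug a $\{-1,0,1\}$-valued minimiser $y$ of $\beta$ into the Rayleigh quotient above. Using $y_v^{2} = |y_v|$ together with $(y_u + y_v)^{2} \leq 2|y_u + y_v|$ (valid because $|y_u + y_v| \in \{0,1,2\}$), the quotient is bounded above by $2\beta(y) = 2\beta(G)$, which gives the bound.

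The harder algorithmic direction follows Trevisan's threshold-rounding approach. I would first compute (in polynomial time) a top eigenvector $f$ of $L_G$, set $x = D^{-1/2} f$, and normalise so that $\max_v x_v^{2} = 1$; by construction the Rayleigh quotient equals $2 - \lambda_n$. Writing $N(x) := \sum_{uv \in E}(x_u + x_v)^{2}$ and $D(x) := \sum_v d(v) x_v^{2}$, I would then enumerate the at most $n$ combinatorially distinct vectors $z^{(t)} \in \{-1,0,1\}^{V(G)}$ given by $z^{(t)}_v := \mathrm{sign}(x_v)$ if $x_v^{2} \geq t$ and $0$ otherwise, as $t$ ranges over $\{x_v^{2} : v \in V(G)\}$, and output the one minimising $\beta(z^{(t)})$. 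To certify that this output satisfies $\beta(z^{(t)}) \leq \sqrt{2(2-\lambda_n)}$, I would draw $t$ uniformly from $[0,1]$ and show the expected numerator of $\beta(z^{(t)})$ is at most $\sqrt{2(2-\lambda_n)}$ times the expected denominator. The expected denominator is exactly $D(x)$, and a short edgewise case analysis on whether $x_u, x_v$ have the same or opposite signs yields $\mathbb{E}_t\bigl|z^{(t)}_u + z^{(t)}_v\bigr|^{2} \leq 2(x_u^{2}+x_v^{2})(x_u+x_v)^{2}$, after which Cauchy--Schwarz combined with the identity $\sum_{uv \in E}(x_u^{2}+x_v^{2}) = D(x)$ furnishes $\mathbb{E}_t[\text{num}] \leq \sqrt{2N(x)D(x)} = \sqrt{2(2-\lambda_n)}\, D(x)$.

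The main obstacle is the per-edge inequality above: when $x_u, x_v$ share a sign, both endpoints crossing the threshold contribute $2$ to $|z_u + z_v|$, whereas with opposite signs the same event contributes $0$, so the expected contribution has qualitatively different forms in the two regimes. Reconciling these to obtain the common bound $\sqrt{2}\,|x_u + x_v|\sqrt{x_u^{2}+x_v^{2}}$ requires the careful case split (together with the elementary bound $(|x_u|+|x_v|)^{2} \leq 2(x_u^{2}+x_v^{2})$); once this is in place, the Cauchy--Schwarz step and the derandomisation over the $n$ threshold vectors are routine, and this yields the claimed deterministic polynomial-time algorithm.
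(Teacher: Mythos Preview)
The paper does not give its own proof of this statement: Theorem~\ref{th:trevisan} is quoted from \cite{trevisanLectureNotes,trevisan2012max} and used as a black box, just like Theorem~\ref{th:cheeger}. So there is no ``paper's own proof'' to compare against.

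That said, your proposal is essentially Trevisan's original argument from the cited references, and it is correct. The Rayleigh identity for $2-\lambda_n$, the easy direction via $(y_u+y_v)^2\le 2|y_u+y_v|$, and the threshold-rounding analysis are all exactly the standard ingredients. One small point of phrasing: the edgewise inequality you need is on the first moment, namely $\mathbb{E}_t\bigl|z^{(t)}_u+z^{(t)}_v\bigr|\le |x_u+x_v|\sqrt{2(x_u^2+x_v^2)}$ (your case split gives $x_u^2+x_v^2$ when the signs agree and $|x_v^2-x_u^2|=|x_u+x_v|(|x_u|+|x_v|)$ when they differ, and both are bounded by $|x_u+x_v|\sqrt{2(x_u^2+x_v^2)}$); writing it as a bound on $\mathbb{E}_t|z_u+z_v|^2$ is a little misleading. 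After that, Cauchy--Schwarz over edges and the averaging argument (there is some $t$ with numerator$/$denominator at most the ratio of expectations) finish the job. You might also note explicitly that an approximate top eigenvector suffices, since exact eigenvectors need not be computable exactly; this is routine and is how the cited references handle it.
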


\begin{remark}
	The algorithms from both Theorem~\ref{th:cheeger} and \ref{th:trevisan} run in time $O(|E(G)|+|V(G)|\log|V(G)|).$\NEWCHANGE{Added this remark. I looked around, but I couldn't figure out if this includes the time required to compute the Fiedler vector.}
\end{remark}

%Additional definitions e.g. in \cite{kuhn2014robust}.

%
%\begin{theorem}[e.g. \cite{watkinsQR}]
%	\label{th:qr}
%\COMMENT{VP: we will probably remove this theorem and the comment that follows}	For a symmetric $n\times n$ matrix $A$, the QR-algorithm can be used to approximate in time $O(n^3)$ the eigenvalues of $A$, in particular $\lambda_2$ and $\lambda_n$, with a cubic convergence rate.
%\end{theorem}

\subsection{Robust expanders}

The following definitions follow closely those in \cite{kuhn2014robust}. Throughout, assume $G$ is an $n$-vertex graph. 
%For common notation not found here we refer the reader to \cite{diestel}

%	\label{def:robustpartition}
%\vspace{0.2 cm}
%\noindent
%{\bf General notation} - 

\vspace{0.2 cm}
\noindent
{\bf Robust expanders and bipartite robust expanders} -
Given an $n$-vertex graph $G$, and $S \subseteq V(G)$ and parameters $0<\nu\leq \tau<1$, we define the \emph{$\nu$-robust neighbourhood} of $S$ to be $\RN_{\nu,G}(S):=\{v\in G\mid d_S(v)\geq \nu n\}$. We say $G$ is a \emph{robust $(\nu,\tau)$-expander} if for all $S\subseteq V(G)$ with $\tau n\leq |S| \leq (1-\tau)n$ we have $|\RN_{\nu, G}(S)|\geq |S|+\nu n$. We say $G$ is a \emph{bipartite robust $(\nu,\tau)$-expander} with bipartition $A,B$ if $A,B$ is a partition of $V(G)$ and for every $S\subseteq A$ with $\tau|A|\leq |S|\leq (1-\tau)|A|$ we have $|\RN_{\nu,G}(S)|\geq|S|+\nu n$. Note that the order of $A$ and $B$ matters here. 
	
\vspace{0.2 cm}
\noindent
{\bf Robust expander components and bipartite robust expander components} -
	Given $0<\rho<1$ and an $n$-vertex graph $G$, we say that $U\subseteq V(G)$ is a \emph{$\rho$-component} if $|U|\geq \sqrt{\rho}n$ and $e_G(U,\overline{U})\leq \rho n^2$, where as usual $\overline{U}:=V(G)\setminus U$. We say that $U$ is \emph{$\rho$-close to bipartite} with bipartition $A,B$ if $A,B$ is a partition of $U$, $|A|,|B|\geq \sqrt{\rho}n$, $||A|-|B||\leq \rho n$, and $e_G(A,\overline{B})+e_G(B,\overline{A})\leq \rho n^2$.
	We will sometimes call a graph a $\rho$-component or $\rho$-close to bipartite if this holds for its vertex set.
	We say that $G[U]$ is a \emph{$(\rho,\nu,\tau)$-robust expander component} of $G$ if $U$ is a $\rho$-component and $G[U]$ is a robust $(\nu,\tau)$-expander. We say that $G[U]$ is a \emph{bipartite $(\rho,\nu,\tau)$-robust expander component} with bipartition $A,B$ if $U$ is $\rho$-close to bipartite with bipartition $A,B$ and $G[U]$ is a bipartite robust $(\nu,\tau)$-expander with bipartition $A,B$.

	We now introduce the concept of a robust partition, which is central to our result.

\vspace{0.2 cm}
\noindent	
{\bf Robust partitions}	- 
	Let $k,\ell, D \in \mathbb{N}$ and $0<\rho\leq \nu\leq \tau<1$. Given an $n$-vertex, $D$-regular graph $G$, we say that $\mathcal{V}$ is a \emph{robust partition of $G$ with parameters $\rho,\nu,\tau,k,\ell$} if the following hold: 
	\begin{enumerate}[noitemsep]
		\item[(D1)] $\mathcal{V} = \{V_1,\dots,V_k,W_1,\dots,W_\ell\}$ is a partition of $V(G)$;
		\item[(D2)] for all $1 \leq i \leq k$, $G[V_i]$ is a $(\rho,\nu,\tau)$-robust expander component of $G$;
		\item[(D3)] for all $1 \leq j \leq \ell$, there exists a partition $A_j,B_j$ of $W_j$ such that $G[W_j]$ is a bipartite $(\rho,\nu,\tau)$-robust expander component with bipartition $A_j,B_j$;
		\item[(D4)] for all $X,X'\in \mathcal{V}$ and all $x\in X$, we have $d_X(x)\geq d_{X'}(x)$; in particular, $d_X(x)\geq D/m$, where $m:= k+\ell$;
		\item[(D5)] for all $1\leq j \leq \ell$, we have $d_{B_j}(u)\geq d_{A_j}(u)$ for all $u\in A_j$ and $d_{A_j}(v)\geq d_{B_j}(v)$ for all $v\in B_j$; in particular, $\delta(G[A_j,B_j])\geq D/2m$;
		\item[(D6)] $k+2\ell \leq \lfloor (1+\rho^{1/3})n/D\rfloor$;
		\item[(D7)] for all $X\in \mathcal{V}$, all but at most $\rho n$ vertices $x\in X$ satisfy $d_X(x)\geq D-\rho n$.
	\end{enumerate}

	For technical reasons, we also introduce weak robust subpartitions.	We will use this definition and the following result only in Section~\ref{sec:findingcycles}. A weak robust subpartition differs from a robust partition mainly in that the disjoint subsets need not be a partition of the vertices.
	Let $k,\ell\in \mathbb{N}_0$ and $0<\rho\leq\nu\leq\tau\leq \eta <1$. Given a graph $G$ on $n$ vertices, we say that $\mathcal{U}$ is a \emph{weak robust subpartition} of $G$ with parameters $\rho,\nu,\tau,\eta,k,\ell$ if the following conditions hold:
	\begin{enumerate}[noitemsep]
		\item[(D1$'$)] $\mathcal{U}= \{U_1,\dots,U_k,Z_1,\dots,Z_\ell\}$ is a collection of disjoint subsets of $V(G)$;
		\item[(D2$'$)] for all $1\leq i \leq k$, $G[U_i]$ is a $(\rho,\nu,\tau)$-robust expander component of $G$;
		\item[(D3$'$)] for all $1\leq j \leq \ell$, there exists a partition $A_j,B_j$ of $Z_j$ such that $G[Z_j]$ is a bipartite $(\rho,\nu,\tau)$-robust expander component with bipartition $A_j,B_j$;
		\item[(D4$'$)] $\delta(G[X])\geq \eta n$ for all $X\in \mathcal{U}$;
		\item[(D5$'$)] for all $1\leq j\leq \ell$, we have $\delta(G[A_j,B_j])\geq \eta n/2$.
	\end{enumerate}

	\begin{lemma}[Proposition 6.1 in \cite{kuhn2014robust}]
		\label{lem:weakrobustsubp}
		Let $k,\ell,D\in \mathbb{N}_0$ and suppose that $0< 1/n \ll \rho \leq \nu\leq \tau\leq \eta \leq \alpha^2/2 < 1$. Suppose that $G$ is a $D$-regular graph on $n$ vertices where $D\geq \alpha n$. Let $\mathcal{V}$ be a robust partition of $G$ with parameters $\rho,\nu,\tau,k ,\ell$. Then $\mathcal{V}$ is a weak robust subpartition of $G$ with parameters $\rho,\nu,\tau,\eta,k,\ell$.
	\end{lemma}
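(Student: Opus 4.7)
The plan is to verify the five conditions (D1$'$)–(D5$'$) of a weak robust subpartition directly from the corresponding conditions of a robust partition, checking that most are immediate and that only a small arithmetic step is required. Conditions (D1$'$), (D2$'$), and (D3$'$) require essentially no work: (D1$'$) only asks for a collection of disjoint subsets of $V(G)$, which is weaker than the partition property given by (D1); and (D2$'$), (D3$'$) are verbatim restatements of (D2), (D3). So the entire content of the lemma lies in deducing the minimum degree bounds (D4$'$) and (D5$'$) from (D4) and (D5), using (D6) to translate the bound $D/m$ into one that is linear in $n$.

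For (D4$'$), fix $X \in \mathcal{V}$ and $x \in X$. Condition (D4) gives $d_X(x) \geq D/m$ where $m = k+\ell$, while (D6) provides $m \leq k+2\ell \leq (1+\rho^{1/3})n/D$. Combining these and using $D \geq \alpha n$ yields
$$
d_X(x) \;\geq\; \frac{D}{m} \;\geq\; \frac{D^2}{(1+\rho^{1/3})n} \;\geq\; \frac{\alpha^2 n}{1+\rho^{1/3}}.
$$
Since $\rho \ll 1$ in particular gives $1 + \rho^{1/3} \leq 2$, one concludes $d_X(x) \geq \alpha^2 n/2 \geq \eta n$, as required by (D4$'$). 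For (D5$'$), condition (D5) gives $\delta(G[A_j,B_j]) \geq D/(2m)$, and the same computation divided by $2$ yields $\delta(G[A_j,B_j]) \geq \alpha^2 n/4 \geq \eta n/2$, as needed.

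Notice (D7) is not used — it plays a role elsewhere but is irrelevant for verifying the weaker conditions (D1$'$)–(D5$'$). The only real obstacle, if it can be called one, is recognising that (D6) is the mechanism that converts the abstract factor $D/m$ (whose value depends on how many parts there are) into a concrete linear-in-$n$ bound, and that the hierarchy $\rho \ll \eta \leq \alpha^2/2$ leaves exactly the right amount of slack for this conversion to succeed. Apart from this short calculation, the lemma is a direct unpacking of definitions.
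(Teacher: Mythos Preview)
Your proof is correct and is the natural direct verification. The paper does not actually give its own proof of this lemma; it is quoted as Proposition~6.1 from \cite{kuhn2014robust} and used as a black box, so there is nothing to compare against beyond confirming that your argument works. One small point of phrasing: you justify $1+\rho^{1/3}\le 2$ by writing ``$\rho\ll 1$'', but the hierarchy in the statement is $1/n\ll\rho$, not $\rho\ll 1$; the inequality you need follows simply from $\rho\le\eta\le\alpha^2/2<1$.
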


\section{Robust partitions}
\label{sec:robustparts}
\subsection{Statements of algorithms}
In this section we present an algorithm (Theorem~\ref{th:decompose}) that we use to find  robust partitions (see previous section for the definition) of regular graphs. As mentioned earlier, the main algorithm and its analysis are obtained by combining the robust expander decomposition of regular graphs from \cite{kuhn2014robust} together with spectral algorithms for graph partitioning from \cite{trevisan2012max, trevisanLectureNotes}.

We begin by presenting four algorithms in the following lemmas that will eventually be used together to obtain the main algorithm. The proofs appear in the following subsection. 

%The proofs immediately follow. The rest of this section is devoted to proving ms Theorem~\ref{th:decompose}, which gives an algorithm for finding a robust partition for a large enough regular graph.

\begin{lemma}
	\label{th:alg1}
	For each fixed choice of parameters $1/n_0 \ll \rho \ll \nu \ll \rho' \ll \tau \ll \alpha < 1$ there exists a polynomial-time algorithm that does the following.
	Given a $D$-regular $n$-vertex graph $G=(V,E)$ and $U \subseteq V$ as input, where $D \geq \alpha n$, $n \geq n_0$ and $G[U]$ is a $\rho$-component of $G$ that is not $\rho'$-close to bipartite, the algorithm determines that either
	\begin{enumerate}[noitemsep]
		\item [(i)] $G[U]$ is a robust $(\nu,\tau)$-expander, or
		\item [(ii)] $U$ has a partition $U_1$, $U_2$ such that $U_1$, $U_2$ are $\rho'$-components,
		%\item [(iii)] $G[U]$ is $\rho'$-close to bipartite
	\end{enumerate}
	and in the case of (ii) identifies the partition $U_1, U_2$. 		We call this Algorithm~1.
%	Note that it is implicit from the lemma that either (i) or (ii) must hold and in the case they both hold, the  algorithm may output either.
\end{lemma}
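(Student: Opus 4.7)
The plan is to apply the spectral partitioning algorithm from Theorem~\ref{th:cheeger} to $G[U]$, and, depending on the conductance of the returned cut, either output the partition (case (ii)) or declare $G[U]$ a robust $(\nu,\tau)$-expander (case (i)).

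Concretely, I first run Theorem~\ref{th:cheeger} on $G[U]$ in polynomial time to obtain a subset $S \subseteq U$ with $\Phi_{G[U]}(S) \le \sqrt{2\lambda_2}$, where $\lambda_2$ is the second smallest eigenvalue of the normalised Laplacian of $G[U]$. Fix a threshold $\phi_0 := \rho'/2$. If $\Phi_{G[U]}(S) \le \phi_0$, the algorithm outputs $U_1 := S$ and $U_2 := U \setminus S$; otherwise it declares $G[U]$ to be a robust $(\nu,\tau)$-expander.

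To verify correctness in the first case, the edges leaving each $U_i$ satisfy
\[
e_G(U_i, \overline{U_i}) \le e_G(S, U \setminus S) + e_G(U, \overline{U}) \le \phi_0 \vol_{G[U]}(S) + \rho n^2 \le (\phi_0 + \rho) n^2 \le \rho' n^2,
\]
as required (using $\vol_{G[U]}(S) \le n^2$ and $\rho \ll \rho'$). For the size condition $|U_i| \ge \sqrt{\rho'} n$, I would use that $G$ is $D$-regular with $D \ge \alpha n$, so $\sum_{v \in U}(D - d_U(v)) = e_G(U, \overline{U}) \le \rho n^2$, meaning all but $O(\sqrt{\rho}n)$ vertices of $U$ have $d_U(v) \ge D - O(\sqrt{\rho}n)$. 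A set $S \subseteq U$ with $|S| < \sqrt{\rho'}n$ would then have most of its vertices $v$ satisfying $d_{U \setminus S}(v) \ge D - |S| - O(\sqrt{\rho} n) \ge \alpha n /2$, which forces $e_G(S, U \setminus S) = \Omega(|S| \alpha n)$ while $\vol_{G[U]}(S) \le |S| D$, giving $\Phi_{G[U]}(S) = \Omega(1) \gg \phi_0$, a contradiction. The analogous bound for $U \setminus S$ follows identically.

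In the second case, Cheeger's inequality yields $\Phi(G[U]) \ge \Phi_{G[U]}(S)^2/4 > \phi_0^2/4$, and the final step is to invoke a structural fact implicit in the robust-partition analysis of \cite{kuhn2014robust}: a $\rho$-component of a $D$-regular graph with $D \ge \alpha n$ that has edge expansion at least $\phi_0^2/4$ and is not $\rho'$-close to bipartite must be a robust $(\nu,\tau)$-expander, provided the parameters satisfy $\rho \ll \nu \ll \rho' \ll \tau \ll \alpha$. The non-bipartite hypothesis is essential: graphs like $K_{|U|/2,|U|/2}$ have high edge expansion yet fail robust expansion (taking $S$ to be one side gives $|\RN_\nu(S)| = |S|$). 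Proving this implication is the main obstacle; morally, the non-bipartite assumption forces $2-\lambda_n$ to be bounded below (otherwise Theorem~\ref{th:trevisan} would yield a low-bipartiteness-ratio partition contradicting the hypothesis), large edge expansion forces $\lambda_2$ to be bounded below, and together they provide a two-sided spectral gap that one can convert into robust expansion via an expander-mixing-style count of the $\nu$-robust neighbourhood of any set of intermediate size. Carefully tracking the parameter hierarchy through this conversion, while handling the slight non-regularity of $G[U]$, is the most delicate part of the proof.
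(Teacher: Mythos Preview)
Your single-shot Cheeger plan has two gaps, one minor and one major.

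The minor gap is in case (ii). Your size argument assumes that a small set $S$ must contain mostly high-degree vertices of $G[U]$, but nothing forces this: $G[U]$ can have up to $O(\rho n/\alpha)$ vertices with arbitrarily small degree in $U$ (e.g.\ a pair of vertices joined only to each other inside $U$ gives $\Phi_{G[U]}(S)=0$ with $|S|=2$), and the Cheeger algorithm may well return such a set. The paper handles this by first deleting the set $U_0$ of vertices of degree at most $\alpha n/2$ in $G[U]$ and only then running Cheeger; once the minimum degree is $\Omega(\alpha n)$, a straightforward count shows both sides of any low-conductance cut have linear size. (The paper in fact iterates Cheeger, peeling off pieces until either they accumulate to a large set or the remaining graph has high conductance; one can argue a single call suffices after removing $U_0$, but you need that step.)

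The major gap is in case (i). The ``structural fact implicit in the robust-partition analysis of \cite{kuhn2014robust}'' that you invoke --- that a $\rho$-component with conductance bounded below which is not $\rho'$-close to bipartite must be a robust $(\nu,\tau)$-expander --- is not proved in \cite{kuhn2014robust}; it is precisely the content of the present lemma, and its proof occupies the bulk of the argument here. The paper's proof is combinatorial, not spectral: assuming a set $S$ with $\tau n' \le |S| \le (1-\tau)n'$ has $|\RN_\nu(S)| < |S|+\nu n$, one partitions $U$ into $X=S\cap N$, $Y=S\setminus N$, $Z=N\setminus S$, $W=U\setminus(S\cup N)$ (with $N=\RN_\nu(S)$), uses the conductance lower bound repeatedly to show $|W|$ and $|X\cup W|=|\overline{Y\cup Z}|$ are at most $\tfrac12\rho' n'$, and deduces that the bipartition $Y,\overline{Y}$ witnesses that $U$ is $\rho'$-close to bipartite, contradicting the hypothesis. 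Your proposed route via a two-sided spectral gap and expander mixing is not implausible, but it is not a proof: the definition of ``$\rho'$-close to bipartite'' carries size and balance constraints that do not translate directly into a lower bound on $\beta(G[U])$, and the mixing lemma you would need must handle the irregularity of $G[U]$. As written, the heart of the lemma is only sketched.
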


\begin{lemma}
	\label{th:alg2}
	For each fixed choice of parameters $1/n_0 \ll \rho \ll \rho'\ll\alpha <1$ there is a polynomial time algorithm that does the following. Given a $D$-regular, $n$-vertex graph $G=(V,E)$ and $U \subseteq V$ as input, where $D \geq \alpha n$, $n \geq n_0$, and $G[U]$ is a $\rho$-component of $G$, the algorithm determines that either
	\begin{enumerate}[noitemsep]
		\item[(i)] $G[U]$ is not $\rho$-close to bipartite, or
		\item[(ii)] $G[U]$ is $\rho'$-close to bipartite,
	\end{enumerate}
and in the case of (ii) identifies the corresponding bipartition. We call this Algorithm~2.
%	Note that it is implicit from the lemma that either (i) and (ii) must hold and in the case they both hold, the  algorithm may output either.
\end{lemma}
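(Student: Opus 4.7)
The plan is to apply Theorem~\ref{th:trevisan} to $H := G[U]$ and round its output into a full bipartition of $U$ that we then verify against the three $\rho'$-close-to-bipartite conditions directly. More precisely, running the algorithm of Theorem~\ref{th:trevisan} on $H$ produces in polynomial time a vector $y \in \{-1,0,1\}^U$ with $\beta_H(y) \leq \sqrt{2(2-\lambda_n(L_H))}$; set $A := \{v : y_v = 1\}$, $B := \{v : y_v = -1\}$ and $C := U \setminus (A \cup B)$. I would then build a bipartition $(A',B')$ of $U$ by placing each $v \in C$ on its minority side among $\{A,B\}$ (i.e.\ the side where $v$ has fewer $H$-neighbours), followed by swapping a small number of suitably chosen vertices to enforce $\bigl||A'|-|B'|\bigr| \leq \rho' n$. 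The algorithm then directly tests the three conditions defining $\rho'$-close to bipartite on $(A',B')$: it outputs case (ii) with witness $(A',B')$ if all hold, and case (i) otherwise.

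For correctness, the nontrivial direction is the contrapositive of case (i): if $G[U]$ is $\rho$-close to bipartite with witness $(A^*,B^*)$, then $(A',B')$ passes all three checks. Plugging $y^* \in \{-1,+1\}^U$ encoding $(A^*,B^*)$ into \eqref{eq:beta} upper-bounds $\beta(H)$ by $2(e_H(A^*)+e_H(B^*))/\vol_H(U)$: the numerator is at most $2\rho n^2$ (since $e_G(A^*,\overline{B^*}) + e_G(B^*,\overline{A^*}) \geq e_H(A^*)+e_H(B^*)$), while $\vol_H(U) = |U|D - e_G(U,\overline U) \geq \alpha\sqrt{\rho}\,n^2/2$ using $|U| \geq \sqrt{\rho}n$, $D \geq \alpha n$ and the $\rho$-component bound $e_G(U,\overline U) \leq \rho n^2$. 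Hence $\beta(H) \leq 4\sqrt{\rho}/\alpha$, and Theorem~\ref{th:trevisan} gives $\beta_H(y) \leq 2\sqrt{\beta(H)} \leq 4(\rho/\alpha^2)^{1/4}$. Expanding the definition of $\beta_H(y)$ then yields $2(e_H(A)+e_H(B)) + e_H(A \cup B, C) = O((\rho/\alpha^2)^{1/4}\,nD)$, which combined with $e_G(U,\overline U) \leq \rho n^2$ and the hierarchy $\rho \ll \rho'$ suffices to verify $e_G(A',\overline{B'}) + e_G(B',\overline{A'}) \leq \rho' n^2$ after the greedy extension and rebalancing.

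The main technical obstacle is controlling the internal contribution $e_H(C_A) + e_H(C_B)$ to the final edge count, since Trevisan's bound constrains edges between $A \cup B$ and $C$ but not those inside $C$, and gives no a priori bound on $|C|$. To handle this I would partition $C$ via a balanced cut of $H[C]$ obtained by a simple random or greedy procedure (cutting at least $e_H(C)/2$ edges), bounding $e_H(C_A) + e_H(C_B) \leq e_H(C)/2$; then $e_H(C)$ itself is controlled by combining the identity $e_H(A \cup B, C) = \vol_H(C) - 2e_H(C)$ with $D$-regularity of $G$ and the $\rho$-component hypothesis on $U$, via a case split on whether $|C|$ is small or large. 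Intuitively, a large dense $H[C]$ with few edges to $A \cup B$ would force $C$ to behave like a near-component of $H$, contradicting either the $\rho$-closeness of $G[U]$ (the witness bipartition $(A^*,B^*)$ would have too many bad edges at its boundary with $C$) or the smallness of $\beta_H(y)$. The final rebalancing step contributes a negligible number of bad edges, absorbed by the $\rho' n^2$ budget under $\rho \ll \rho'$.
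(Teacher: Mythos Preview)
Your overall scheme—run Theorem~\ref{th:trevisan} once on $H=G[U]$, round to a bipartition, and then directly test the $\rho'$-close-to-bipartite conditions—is logically sound as an algorithm, and your computation that $\beta(H)$ is small when $G[U]$ is $\rho$-close to bipartite is correct (indeed sharper, since Proposition~\ref{Claim1}(i) gives $|U|\geq (\alpha-\sqrt{\rho})n$, not just $\sqrt{\rho}\,n$). The genuine gap is in your treatment of the zero set $C$.

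Theorem~\ref{th:trevisan} gives no lower bound on the support of $y$: the guarantee that $\beta_H(y)$ is small controls only $2e_H(A)+2e_H(B)+e_H(A\cup B,C)$ relative to $\vol_H(A\cup B)$, and says nothing about $e_H(C)$. Your balanced-cut step on $H[C]$ yields only $e_H(C_A)+e_H(C_B)\leq e_H(C)/2$, and $e_H(C)$ can be of order $|C|D$, which is far above $\rho' n^2$ whenever $|C|$ is of order $n$. Your case-split argument does not close this: even under the hypothesis that $G[U]$ is $\rho$-close to bipartite with witness $(A^*,B^*)$, the only consequence for $C$ is that $H[C]$ is itself close to bipartite with the \emph{inherited} partition $(C\cap A^*,\,C\cap B^*)$—which the algorithm does not know, and cannot approximate via a random or greedy cut. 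There is no contradiction with ``$C$ behaving like a near-component'': a concrete instance is $G[U]$ built from two nearly complete balanced bipartite blobs with very few edges between them; Trevisan's rounding may return $y$ supported on one blob (achieving $\beta_H(y)\approx 0$), leaving the second blob entirely inside $C$, where your balanced cut will miscut roughly half of its $\Theta(n^2)$ edges.

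The paper closes exactly this gap by \emph{iterating} Theorem~\ref{th:trevisan}: at step $i$ one removes the current $A_i\cup B_i$, reapplies the algorithm to the residual $G_i=G[U\setminus U_i]$, and stops when either the residual has at most $\rho' n$ vertices or the returned $\beta_i$ exceeds a fixed threshold $\beta$ with $\rho\ll\beta\ll\rho'$. In the first case one assembles a global bipartition from the pieces; in the second, the lower bound $\beta(G_t)\geq \beta_t^2/4$ from Theorem~\ref{th:trevisan} on a residual of volume $\Omega(\rho' D n)$ forces any bipartition of $G[U]$ to have more than $\rho n^2$ bad edges. The iteration is precisely what drives $|C|$ down; a single application followed by a crude cut on $C$ cannot.
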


\begin{lemma}
	\label{th:alg3}	
	For each fixed choice of parameters $1/n_0 \ll \rho \ll \nu  \ll \rho'\ll \tau \ll \alpha < 1$ there is a polynomial-time algorithm that does the following. Given a   a $D$-regular, $n$-vertex graph $G=(V,E)$ and $U \subseteq V$ as input, where $D \geq \alpha n$, $n \geq n_0$, and $G[U]$ is $\rho$-close to bipartite with bipartition $A,B$, the algorithm determines that either
	\begin{enumerate}[noitemsep]
		\item[(i)] $G[U]$ is a bipartite robust $(\nu,\tau)$-expander with bipartition $A,B$, or
		\item[(ii)] $U$ has a partition $U_1$, $U_2$ such that $G[U_1]$, $G[U_2]$ are $\rho'$-components,
	\end{enumerate}
	and in the case of (ii) identifies the partition $U_1,U_2$ of $U$.
		We call this Algorithm~3.
%	Note that it is implicit from the lemma that either (i) and (ii) must hold and in the case they both hold, the  algorithm may output either.
\end{lemma}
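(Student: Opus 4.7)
My plan is to follow the template of Algorithm~1: apply the Cheeger-type spectral partitioning of Theorem~\ref{th:cheeger} to $G[U]$ in polynomial time to obtain $S \subseteq U$ with $\Phi_{G[U]}(S) \le \sqrt{2\lambda_2(G[U])}$, then branch on whether $\Phi_{G[U]}(S)$ lies below or above a threshold $\theta = \theta(\rho',\nu,\tau,\alpha)$ chosen to align the spectral dichotomy with that of the lemma. The bipartition $A, B$ is supplied as input, so no bipartite-detection step (Theorem~\ref{th:trevisan}) is needed.

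\medskip

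If $\Phi_{G[U]}(S) \le \theta$, output $(U_1, U_2) := (S, U\setminus S)$. The external-edge bound $e_G(U_i, \overline{U_i}) \le e_G(U, \overline{U}) + e_{G[U]}(U_1, U_2) \le \rho n^2 + \theta \cdot Dn \le \rho' n^2$ uses the $\rho$-componentness of $U$ together with $\theta \ll \rho'/\alpha$. Should Cheeger return a piece of size less than $\sqrt{\rho'}n$, we peel that piece off and iterate on the remaining vertices, exactly as in the standard template for Algorithm~1.

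\medskip

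The heart of the proof is the other branch: if $\Phi_{G[U]}(S) > \theta$, then $\Phi(G[U]) \ge \theta^2/4$, and we must deduce that $G[U]$ is a bipartite robust $(\nu,\tau)$-expander with bipartition $A, B$. Arguing contrapositively, let $S' \subseteq A$ with $\tau|A|\le|S'|\le(1-\tau)|A|$ violate bipartite robust expansion, so $R := \RN_{\nu,G[U]}(S')$ satisfies $|R| < |S'| + \nu|U|$. Form the cut $(C_1, C_2) := (S'\cup R,\, (A\setminus S')\cup (B\setminus R))$ of $U$ and bound $e_{G[U]}(C_1, C_2)$ by four contributions: the $A$-internal part $e(S', A\setminus S') \le e_G(A) \le \rho n^2/2$ and the $B$-internal part $e(R, B\setminus R) \le e_G(B) \le \rho n^2/2$, both by $\rho$-closeness to bipartite; the bipartite part $e(S', B\setminus R) \le \nu|U|\cdot|B|$ by the definition of $R$; and the delicate bipartite part $e(A\setminus S', R)$, which we control by the double-counting identity $e(A\setminus S', R) = e(A,R) - e(S', R) \le D|R| - (D|S'| - O(\rho+\nu)n^2) \le D\nu|U| + O(\rho+\nu)n^2$, using $e(S',B) \ge D|S'| - O(\rho)n^2$ from $\rho$-closeness to bipartite together with $D$-regularity. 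Summing the four contributions and dividing by $\vol_{G[U]}(C_1) \gtrsim \alpha\tau|A|\cdot n$ (from $|S'|\ge\tau|A|$ and near-regularity) gives a bound on $\Phi_{G[U]}(C_1)$ strictly smaller than $\theta^2/4$ under the hierarchy $\rho\ll\nu\ll\rho'\ll\tau\ll\alpha$, contradicting Cheeger.

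\medskip

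The main obstacle is the double-counting estimate for $e(A\setminus S', R)$ above; this is what distinguishes Algorithm~3 from the non-bipartite Algorithm~1 and essentially uses both the near-bipartite structure of $G[U]$ and the $D$-regularity of $G$. A secondary subtlety is that the $\rho$-component hypothesis only gives $|U| \ge 2\sqrt{\rho}n$, which is rather weak, so the threshold $\theta$ and the implicit functions defining the hierarchy must be chosen carefully to close the contradiction uniformly in $|U|$.
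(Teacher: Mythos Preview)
Your approach matches the paper's: it too reuses the iterative Cheeger peeling of Algorithm~1 verbatim (including the proof of Claim~\ref{Claim3} for outcome~(ii)) and, in the expander branch, forms the cut $(A^*\cup B^*,\hat A\cup\hat B)$ with $A^*=S'$, $B^*=R\cap B$, and bounds the four cross terms exactly as you do, the key term being $e(B^*,\hat A)\le D|B^*|-e(B^*,A^*)\le D\nu n+O(\rho+\nu)n^2$ via your double-counting identity. Two minor remarks: take $R\cap B$ rather than $R$ so that $(C_1,C_2)$ is genuinely a partition of $U$ (harmless, since $|R\cap A|\le(\rho/\nu)n$ by $\rho$-closeness); and your worry that $|U|$ might be only $2\sqrt{\rho}\,n$ is unfounded, since Remark~\ref{rem1} together with $D$-regularity gives $|A|,|B|\ge(\alpha-2\sqrt{\rho})n$, so all sizes are linear in $n$ and the constants close without difficulty.
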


\begin{lemma}
	\label{th:alg6}	
	For each fixed choice of parameters $1/n_0 \ll \rho \ll \nu \ll \rho'\ll \tau \ll \alpha < 1$ there exists a polynomial-time algorithm that does the following. Given  a $D$-regular $n$-vertex graph $G=(V,E)$ and $U \subseteq V$ as input, where $D \geq \alpha n$, $n \geq n_0$, and $G[U]$ is a $\rho$-component, the algorithm determines that either
	
	\begin{enumerate}[noitemsep]
		\item[(i)] $G[U]$ is a robust $(\nu,\tau)$-expander, or
		\item[(ii)] $G[U]$ is a bipartite robust $(\nu,\tau)$-expander, or		
		\item[(iii)] $U$ has a partition $U_1$, $U_2$ such that $G[U_1]$, $G[U_2]$ are $\rho'$-components,
		%\item[(iii)] $U$ is $\rho_2$-close to bipartite.
	\end{enumerate}
and in the case of (ii) and (iii) identifies the corresponding partition.	We call this Algorithm~4.
%	Note that it is implicit from the lemma that either (i), (ii), or (ii) must hold and in the case more than one of them hold, the  algorithm may output any of them that hold.
	%Furthermore there is an algorithm to determine which of $(i)$, $(ii)$, $(iii)$ holds and in the case that $(ii)$ or $(iii)$ hold, the algorithm returns the appropriate partition.
\end{lemma}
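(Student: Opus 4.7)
The plan is to combine Algorithms~1,~2, and~3: first use Algorithm~2 to decide whether $G[U]$ is close to bipartite, and then branch to Algorithm~1 (non-bipartite case) or Algorithm~3 (bipartite case) accordingly.

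In detail, I would pick intermediate parameters $\rho^\circ, \rho^\bullet$ with $\rho \ll \rho^\circ \ll \rho^\bullet \ll \rho'$ and apply Algorithm~2 to $G[U]$ with parameters $(\rho^\circ, \rho^\bullet)$ (i.e., Algorithm~2's ``$\rho$'' is $\rho^\circ$ and its ``$\rho'$'' is $\rho^\bullet$). If Algorithm~2 certifies that $G[U]$ is not $\rho^\circ$-close to bipartite, I then apply Algorithm~1 to $G[U]$ with its ``$\rho'$'' parameter set to $\rho^\circ$; Algorithm~1 either returns a certificate that $G[U]$ is a robust $(\nu_1, \tau_1)$-expander---in which case Algorithm~4 outputs~(i)---or returns a partition of $U$ into two $\rho^\circ$-components, which we output as~(iii). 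Otherwise, Algorithm~2 returns a bipartition $A,B$ of $U$ witnessing that $G[U]$ is $\rho^\bullet$-close to bipartite, and I apply Algorithm~3 with input $(G[U], A, B)$ and its ``$\rho$'' parameter equal to $\rho^\bullet$; Algorithm~3 either returns a certificate that $G[U]$ is a bipartite robust $(\nu_3, \tau_3)$-expander with bipartition $A,B$---in which case Algorithm~4 outputs~(ii)---or returns a partition of $U$ into two $\rho^\bullet$-components, which we output as~(iii).

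The main non-trivial point is parameter bookkeeping. I need to (a) choose the intermediate parameters of each sub-algorithm so that their preconditions hold when invoked, (b) ensure the returned expansion parameters satisfy $\nu_1, \nu_3 \ge \nu$ and $\tau_1, \tau_3 \le \tau$ so that outputs~(i) and~(ii) are robust $(\nu, \tau)$-expanders (using the monotonicity that smaller $\nu$ and larger $\tau$ weaken the expander condition), and (c) ensure the partition thresholds $\rho^\circ, \rho^\bullet$ are at most $\rho'$. A short observation handles the ``$\rho$-component'' preconditions at the enlarged thresholds $\rho^\circ, \rho^\bullet$: from $D$-regularity with $D \ge \alpha n$ together with $e_G(U,\bar U) \le \rho n^2$ one obtains $|U| \ge (\alpha - \sqrt{\rho})n \ge \alpha n/2$, so the size requirement $|U| \ge \sqrt{\rho_i}\,n$ is automatic whenever $\rho_i \le \alpha^2/4$, while the edge-boundary bound only weakens as $\rho_i$ grows. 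All these parameter constraints fit comfortably inside the given hierarchy $\rho \ll \nu \ll \rho' \ll \tau \ll \alpha$.

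Correctness and polynomial running time then follow immediately from the corresponding guarantees of Algorithms~1,~2, and~3, each of which is invoked at most once.
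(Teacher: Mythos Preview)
Your approach is essentially identical to the paper's: run Algorithm~2 with intermediate thresholds, then branch to Algorithm~1 or Algorithm~3; the paper uses the hierarchy $\rho \ll \nu \ll \rho_1 \ll \rho_2 \ll \nu_2 \ll \rho'$, which is exactly your scheme once the bookkeeping is made precise. Two small slips to fix: you need $\nu \ll \rho^\circ$ (not merely $\rho \ll \rho^\circ$) so that Algorithm~1 can be invoked with its $\nu$-parameter at least the target $\nu$, and Algorithm~3 (with its ``$\rho$'' set to $\rho^\bullet$) returns components at its own ``$\rho'$'' level---some $\rho_3' \gg \nu_3 \gg \rho^\bullet$---not $\rho^\bullet$-components, so you must also pick $\rho_3' \le \rho'$ (e.g.\ $\rho_3' = \rho'$).
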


\begin{remark}
In each of the four lemmas above, the algorithm distinguishes between various cases. It may be that more than one of these cases hold for the given input graph; if so then the algorithm will output any one case that holds for the given graph.

The running time of each of the algorithms is $O(n^3)$, where $n$ is the number of vertices of the input graph. The running time does not depend at all on the fixed parameters (not even as hidden constants in the `Big O' notation). However in each lemma, the hierarchy is necessary for the fixed parameters in order to guarantee that at least one of the outcomes occurs in the conclusion of the lemma.
\end{remark}

\subsection{Proofs of correctness of algorithms}
We now give the proofs of the preceding lemmas. We begin with a simple proposition.

	%We use Algorithm 2 to decide if (iii) holds, in which case we are done. We will assume (iii) does not hold.
	
	\begin{proposition}\label{Claim1}
	Let $G$ be an $n$-vertex $D$ regular graph with $D \geq \alpha n$ and let $U$ be a $\rho$-component of $G$. Then
	\begin{itemize}[noitemsep]
	\item[(i)]	$|U| \geq D - \sqrt{\rho}n \geq (\alpha - \sqrt{\rho})n$
	\item[(ii)] There are at most $\frac{2\rho}{\alpha(\alpha-\sqrt{\rho})}|U|$ vertices of
	degree at most $\frac{1}{2}\alpha n$ in $G[U]$.  
	\end{itemize}	
	\end{proposition}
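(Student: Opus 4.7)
The plan is to prove both parts using the two defining properties of a $\rho$-component, namely $|U|\geq \sqrt{\rho}n$ and $e_G(U,\overline{U})\leq \rho n^2$, combined with the regularity of $G$.

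For (i), I would use an averaging argument on the edges leaving $U$. Since $\sum_{v\in U} d_{\overline{U}}(v) = e_G(U,\overline{U}) \leq \rho n^2$ and $|U|\geq \sqrt{\rho}n$, the average value of $d_{\overline{U}}(v)$ over $v\in U$ is at most $\sqrt{\rho}n$. So there exists some $v\in U$ with $d_{\overline{U}}(v)\leq \sqrt{\rho}n$, and by $D$-regularity this vertex satisfies $d_U(v) \geq D - \sqrt{\rho}n$. Since $U$ contains $v$ together with all its $U$-neighbours, $|U|\geq d_U(v)+1\geq D-\sqrt{\rho}n$, and the bound $D\geq \alpha n$ yields $|U|\geq (\alpha-\sqrt{\rho})n$.

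For (ii), I would again count edges from $U$ to $\overline{U}$, but this time restricted to the set $X:=\{v\in U : d_U(v)\leq \tfrac{1}{2}\alpha n\}$. Each such vertex has $d_{\overline{U}}(v)=D-d_U(v)\geq \alpha n-\tfrac{1}{2}\alpha n=\tfrac{1}{2}\alpha n$ by $D$-regularity and $D\geq\alpha n$. Summing over $X$ gives
\[
\tfrac{1}{2}\alpha n\,|X| \;\leq\; \sum_{v\in X} d_{\overline{U}}(v) \;\leq\; e_G(U,\overline{U}) \;\leq\; \rho n^2,
\]
so $|X|\leq 2\rho n/\alpha$. Finally, applying part (i) in the form $n\leq |U|/(\alpha-\sqrt{\rho})$ converts this into $|X|\leq \tfrac{2\rho}{\alpha(\alpha-\sqrt{\rho})}|U|$, as required.

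There is no real obstacle here: both parts reduce to one-line edge counts together with regularity. The only small point to be careful about is that the bound in (ii) is stated in terms of $|U|$ rather than $n$, so one must remember to feed the conclusion of (i) back in to complete the estimate.
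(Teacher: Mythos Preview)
Your proof is correct. The approach is essentially the same as the paper's---a simple edge count exploiting $D$-regularity and the bound $e_G(U,\overline{U})\leq\rho n^2$---though you count edges leaving $U$ whereas the paper counts edges inside $U$ (via $\tfrac12|U|^2\geq e_G(U)\geq \tfrac12 D|U|-\rho n^2$ for (i), and a similar inequality for (ii)); the two computations are dual via $d_U(v)+d_{\overline{U}}(v)=D$, and yours is arguably slightly cleaner.
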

	\begin{proof}
		(i) Since $G$ is $D$-regular and $U$ is a $\rho$-component, we have $ \frac{1}{2}|U|^2 \geq e_G(U) \geq \frac{1}{2}D |U| - \rho n^2,$ from which we obtain
		$ |U| \geq D - \frac{\rho n^2}{|U|}\geq D - \sqrt{\rho}n,$
		where the second inequality uses that $|U|\geq \sqrt{\rho}n$ since it is a $\rho$-component.
		
		(ii) If the number of vertices of degree at most $\frac{1}{2} \alpha n$ is $\gamma |U|$, then we have 
		$$ (D/2)\gamma|U| + D(1-\gamma)|U| \geq 2e_G(U) \geq D|U|- \rho n^2, $$
		from which we get
		$\gamma \leq \frac{2\rho n^2}{ D |U|} \leq  \frac{2\rho}{\alpha(\alpha-\sqrt{\rho})}$ using part (i) and $D \geq \alpha n$ for the final inequality.
	\end{proof}

%	\begin{claim}\label{Claim2}
%		There are $\gamma n'$ vertices of degree at most $\frac{1}{2}\alpha n$ in $G[U]$ where 
%		$$\gamma \leq \frac{2\rho}{\alpha(\alpha-\sqrt{\rho})}\ll \nu. $$
%	\end{claim}
%	\begin{proof}
%		If the number of vertices of degree at most $\frac{1}{2} \alpha n$ is $\gamma n'$, then we have 
%		$$\gamma n' (\frac{1}{2}\alpha n) + (1-\gamma)n' (\alpha n)\geq e_G(U) \geq n'(\alpha n)-\rho n^2, $$
%		from which we get
%		$$\gamma \leq \frac{2\rho n}{\alpha n'} \overset{cl.\, \ref{Claim1}}{\leq}  \frac{2\rho}{\alpha(\alpha-\sqrt{\rho})}.$$
%	\end{proof}

	\begin{remark}
		\label{rem1}
 A similar calculation shows that if $U$ is $\sigma$-close to bipartite with bipartition $A,B$, we have $|A|,|B|\geq D - 2\sqrt{\sigma}n \geq (\alpha -2\sqrt{\sigma})n$.
	\end{remark}

\begin{proof}[Proof of Lemma~\ref{th:alg1}]

	We will use the algorithm in Theorem~\ref{th:cheeger} to iteratively find subgraphs of $G[U]$ that are not well connected to the rest of $U$ and remove them until this is no longer possible. %????
	%%This process stops when we can no longer find such a subgraph. 
	If this process continues to a point where the removed parts are large enough then we can show both the removed part and the remaining part each form a $\rho'$-component. If the process stops before the removed part becomes large  then we can show $G[U]$ is a robust expander.

Let $G=(V,E)$ and, in this proof, for any subset $S \subseteq U$ we will use $\overline{S}$ to denote $U \setminus S$ rather than our usual convention where it denotes $V \setminus S$. 

Let $n'=|U|$ so that $n' \geq (\alpha - \sqrt{\rho})n \geq \frac{1}{2} \alpha n$ (by the previous proposition).
	Let $U_0$ be the vertices of degree at most $\frac{1}{2}\alpha n$ in $G[U]$ so that $|U_0| \leq  \frac{2\rho}{\alpha(\alpha-\sqrt{\rho})} n' \leq \alpha \nu n'/2$ 
		also by the previous proposition. Note for later that 
\begin{equation}
\label{eq:volU0}
\vol_G(U_0) \leq  n |U_0| \leq (2n' / \alpha) (\alpha \nu n'/2) \leq \nu n'^2.
\end{equation}
	
	Set $U':=U\setminus U_0$ and choose $\phi$ such that $\nu\ll \phi \ll \rho'$.
	We apply Theorem~\ref{th:cheeger} to $G[U']$ as follows to construct $U_1, U_2, \ldots$. 	
	Given $U_i$, set $\overline{U_i}:=U\setminus U_i$ and $G_i:= G[\overline{U_i}]$ apply the algorithm of Theorem~\ref{th:cheeger} to $G_i$ to output some $S_i \subseteq \overline{U_i}$. By replacing $S_i$ with $U_i \setminus S_i$ if necessary, assume $|S_i| \leq |U_i \setminus S_i|$.
%	compute the second eigenvalue $\lambda^{(i)}$ of $L_{G_i}$. 
	If
	\begin{enumerate}[noitemsep]
		\item[] $\phi_i := \Phi_{G_i}(S_i)> \phi$ or $|U_i|\geq \frac{1}{3}|U|$
	\end{enumerate}
	then stop. Otherwise 
	%we use the algorithm of Theorem~\ref{th:cheeger} to output some $S_i \subseteq \overline{U_i}$ with $|S_i| \leq \frac{1}{2}|\overline{U_i}|$ and $e_G(S_i,\overline{U_i}\setminus S_i) \leq \sqrt{2 \lambda^{(i)}} \min(\vol_{G_i}(S_i),\vol_{G_i}(U_i\setminus S_i) )$.
	set $U_{i+1}=U_i \cup S_i $ and repeat.
	In this way we obtain sets $S_0, \ldots, S_{t-1}$ and $U_0, \ldots, U_t$ in polynomial time. Note that $|U_{t-1}| < \frac{1}{3} |U|$, so 
	%and $|S_{t-1}|\leq \frac{1}{2}|\overline{U_{t-1}}| < \frac{1}{2}(|U|-|U_{t-1}|)$ 
	\begin{equation}
	\label{eq:Ut}
	 |U_t| = |U_{t-1}|+|S_{t-1}| \leq |U_{t-1}| + \frac{1}{2} (|U| -|U_{t-1}|)  \leq \frac{2}{3} |U|.
\end{equation}
	%\vspace{0.3cm}
	
	There are two cases to consider:
	\begin{enumerate}[noitemsep]
		\item[(a)] $|U_t| > \frac{1}{4} \rho'n'$ and
		\item[(b)] $|U_t| \leq \frac{1}{4} \rho'n'$.
	\end{enumerate}
	
	\begin{claim}\label{Claim3}
		In case (a),  $U_t$, $\overline{U_t}$ are $\rho'$-components.
	\end{claim}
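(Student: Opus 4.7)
The plan is to verify the two defining properties of a $\rho'$-component, namely size at least $\sqrt{\rho'}\,n$ and cut at most $\rho' n^2$, for both $U_t$ and $\overline{U_t} = U \setminus U_t$.

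For the sizes, $|\overline{U_t}| \geq |U|/3$ follows from \eqref{eq:Ut}, and combined with Proposition~\ref{Claim1}(i) and the hierarchy $\sqrt{\rho'} \ll \alpha$ this comfortably exceeds $\sqrt{\rho'}\,n$. The lower bound on $|U_t|$ is exactly what the case~(a) hypothesis is designed to deliver, after using $n' \geq (\alpha - \sqrt{\rho})n$ and the parameter hierarchy.

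The main work is bounding the cut $e_G(U_t, V \setminus U_t)$. I would split it as $e_G(U_t, V \setminus U) + e_G(U_t, U \setminus U_t)$. The first term is at most $\rho n^2$ since $U$ is a $\rho$-component of $G$. For the second, I would exploit the iterative construction $U_t = U_0 \cup S_0 \cup \cdots \cup S_{t-1}$ (a disjoint union by construction of the $S_i$): each edge from $x \in S_i$ to $y \in U \setminus U_t$ satisfies $y \in \overline{U_{i+1}} \subseteq \overline{U_i} \setminus S_i$, and so is counted by the Cheeger cut $e_{G_i}(S_i, \overline{U_i} \setminus S_i)$. Since the algorithm did not stop at step $i < t$ we have $\phi_i \leq \phi$, giving
\[
e_{G_i}(S_i, \overline{U_i} \setminus S_i) \leq \phi_i\, \vol_{G_i}(S_i) \leq \phi\, \vol_G(S_i).
\]
Summing and using $\sum_i \vol_G(S_i) \leq \vol_G(U) \leq nD \leq n^2$ yields a contribution of at most $\phi n^2$. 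Edges from $U_0$ to the rest contribute at most $\vol_G(U_0) \leq \nu n^2$ by \eqref{eq:volU0}. Combining the three bounds and invoking $\rho \ll \nu \ll \phi \ll \rho'$ gives $e_G(U_t, V \setminus U_t) \leq \rho' n^2$; the analogous bound for $\overline{U_t}$ follows since $e_G(U_t, U \setminus U_t) = e_G(\overline{U_t}, U_t)$ and $e_G(\overline{U_t}, V \setminus U) \leq e_G(U, V\setminus U) \leq \rho n^2$.

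The main obstacle is the cut accounting: I have to verify that every $U_t$--$\overline{U_t}$ cross-edge is captured by exactly one of the Cheeger cuts in the iterative process, i.e., that the chain $U \setminus U_t \subseteq \overline{U_{i+1}} \subseteq \overline{U_i} \setminus S_i$ holds for each $i < t$, so that the telescoping total is $\phi \sum_i \vol_G(S_i)$ rather than something larger. A secondary subtlety is passing from the $|S_i|$-based ``smaller-side'' convention used when invoking Theorem~\ref{th:cheeger} to the $\vol_{G_i}(S_i)$-based one needed in the bound above, which is harmless because $\phi_i \leq \phi$ forces the cut to be at most $\phi$ times either side's volume.
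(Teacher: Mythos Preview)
Your cut-bound argument is correct and is essentially identical to the paper's: decompose $U_t$ as $U_0 \cup S_0 \cup \cdots \cup S_{t-1}$, bound each $e_G(S_i,\overline{U_t})$ by the Cheeger cut $e_{G_i}(S_i,\overline{U_i}\setminus S_i)\le \phi\,\vol_{G_i}(S_i)$, sum, add the $U_0$ and $e_G(U,V\setminus U)$ contributions, and absorb everything into $\rho' n^2$. Your observation about the $|S_i|$-vs-volume convention is also correct and harmless for the reason you give. The size bound for $\overline{U_t}$ via \eqref{eq:Ut} is fine.

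There is, however, a genuine gap in the size bound for $U_t$. Case~(a) only gives $|U_t|\ge \tfrac14\rho' n' \ge \tfrac18\rho'\alpha n$, and this is \emph{not} $\ge \sqrt{\rho'}\,n$: since $\rho'\ll 1$ we have $\tfrac18\rho'\alpha \ll \sqrt{\rho'}$, so no choice of the hierarchy rescues the inequality you claim. The paper deals with this by a bootstrap after the cut bound is established (it cites Proposition~\ref{Claim1}): from $D|U_t| = 2e_G(U_t) + e_G(U_t,V\setminus U_t) \le |U_t|^2 + e_G(U_t,V\setminus U_t)$ one gets a quadratic inequality in $|U_t|$ whose roots force $|U_t|$ to be either below a small threshold or at least roughly $D\ge \alpha n$. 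To make this threshold beat $\tfrac14\rho' n'$, use your \emph{intermediate} cut bound $e_G(U_t,V\setminus U_t)\le \nu n'^2 + \phi|U_t|n + \rho n^2$ (before replacing $|U_t|$ by $\tfrac23|U|$): then the quadratic becomes $|U_t|^2 - (D-\phi n)|U_t| + O(\nu)n^2 \ge 0$, whose small root is $O(\nu n/\alpha)$, which \emph{is} below $\tfrac14\rho' n'$ because $\nu\ll\rho'$. Hence $|U_t|$ must exceed the large root, giving $|U_t|\ge \tfrac12(D-\phi n)\ge \tfrac14\alpha n \ge \sqrt{\rho'}\,n$. You need to insert this step.
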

	
	\begin{claim}\label{Claim4}
		In case (b), $G[U]$ is a robust $(\nu, \tau)$-expander.
	\end{claim}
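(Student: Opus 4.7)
In case (b), $|U_t| \leq \rho' n'/4 < n'/3$, so the iterative loop terminated via $\phi_t > \phi$, and Theorem~\ref{th:cheeger} applied to $G[\overline{U_t}]$ yields $\Phi(G[\overline{U_t}]) \geq \phi^2/4$. Moreover, $U_0 \subseteq U_t$ by construction, so every $v \in \overline{U_t}$ has degree at least $\alpha n/2$ in $G[U]$, and hence at least $\alpha n/3$ in $G[\overline{U_t}]$ once the small set $U_t$ is removed.

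I proceed by contradiction: suppose there exists $S \subseteq U$ with $\tau n' \leq |S| \leq (1-\tau) n'$ and $|R| < |S| + \nu n'$, where $R := \RN_{\nu, G[U]}(S)$ and $T := U \setminus R$. Set $S_T := S \cap T$ and note $|R \setminus S| < |S_T| + \nu n'$, while every $v \in T$ has $d_S(v) < \nu n'$. Applying the edge expansion of $G[\overline{U_t}]$ to $S^* := S \cap \overline{U_t}$---whose size and that of $\overline{U_t} \setminus S^*$ are each at least $\tau n'/2$ since $|U_t|$ is tiny---together with the minimum-degree bound yields
\[
e_G(S, U \setminus S) \;\geq\; e_{G[\overline{U_t}]}(S^*, \overline{U_t} \setminus S^*) \;\geq\; \frac{\phi^2}{4}\cdot\frac{\alpha n}{3}\cdot\frac{\tau n'}{2} \;=\; \frac{\phi^2 \alpha \tau}{24}\, n n'.
\]
Conversely, $e_G(S, T \setminus S) \leq \nu n' |T \setminus S|$ (since each $v \in T \setminus S \subseteq T$ has $d_S(v) < \nu n'$) and $e_G(S, R \setminus S) \leq |S|\,|R \setminus S| \leq n'(|S_T| + \nu n')$, so
\[
e_G(S, U \setminus S) \;\leq\; n' |S_T| + 2\nu n'^2.
\]
Combining the two bounds (and absorbing the $O(\nu n'^2)$ term using $\nu \ll \phi$ in the hierarchy), one concludes $|S_T| \geq c n'$ for some positive constant $c = c(\phi, \alpha, \tau)$.

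The principal obstacle is the final step: converting this linear lower bound on $|S_T|$ into a contradiction with the hypothesis that $G[U]$ is not $\rho'$-close to bipartite. Intuitively, the existence of a large $S_T \subseteq S$ in which every vertex has fewer than $\nu n'$ $S$-neighbours forces ``one side of a bipartite structure'' on $G[U]$. To make this precise, I would first use the minimum-degree bound to show that $|R \setminus S|$ is also a linear fraction of $n$: each $v \in S_T \setminus U_0$ sends $\geq \alpha n/3$ edges into $U \setminus S$, and since $d_S(v) < \nu n'$ for $v \in T \setminus S$, essentially all of these edges must land in $R \setminus S$. One then exhibits a suitably chosen balanced bipartition $(A, B)$ of $U$ with $S_T$ and $R \setminus S$ as the cores of the two parts, for which $e(G[A]) + e(G[B]) \leq \rho' n^2/2$; combined with $e_G(U, V \setminus U) \leq \rho n^2$, this witnesses $G[U]$ as $\rho'$-close to bipartite, contradicting the hypothesis. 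Choosing $(A,B)$ carefully so that the pieces $S \setminus S_T$ and $(U \setminus S) \setminus (R \setminus S)$ are absorbed without destroying either the balance condition $||A|-|B|| \leq \rho' n$ or the internal-edge count is the key technical step that I expect to require the most care.
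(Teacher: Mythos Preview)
Your opening is correct and matches the paper: from case (b) you get $\Phi(G[\overline{U_t}])\geq\phi^2/4$ and $\delta(G[\overline{U_t}])\geq\alpha n/3$, hence the expansion inequality $e_{G[U]}(R,U\setminus R)\geq\frac{1}{12}\phi^2\alpha n\bigl(\min(|R|,|U\setminus R|)-\frac14\rho'n'\bigr)$ for every $R\subseteq U$. Your translation of the paper's $Y,Z,W,X$ into $S_T,\,R\setminus S,\,T\setminus S,\,S\cap R$ is also accurate.

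The gap is precisely where you say it is, and your sketch does not close it. Showing $|S_T|\geq c(\phi,\alpha,\tau)n'$ and $|R\setminus S|$ linearly large is not enough: to witness $\rho'$-closeness to bipartite you must show the \emph{leftover} pieces $X:=S\cap R$ and $W:=T\setminus S$ have size at most $O(\rho' n')$, not merely that the two ``cores'' are large. Nothing in your argument bounds $|X|$ or $|W|$, and your single application of expansion to $S$ cannot produce such bounds (the constant you obtain involves $\phi^2\tau$, which lives above $\rho'$ only because $\tau\gg\rho'$; it gives no control at scale $\rho'$).

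The paper supplies the missing mechanism by applying the expansion inequality \emph{again}, to $W$ and to $Y\cup Z$, after first squeezing their boundary edge counts down to $O(\nu n^2)$. The engine for that squeeze is $D$-regularity: since every vertex of $Y=S_T$ has fewer than $\nu n'$ neighbours in $S$ and fewer than $\nu n$ in $W$ (as $W\subseteq T$), one gets $e_G(Y,Z)\geq D|Y|-3\nu n^2$, which forces $|Z|=|Y|\pm O(\nu n/\alpha)$. This near-equality then yields $e_G(W,\overline W)\leq 5\nu n^2$ and $e_G(Y\cup Z,\overline{Y\cup Z})\leq 7\nu n^2$. Feeding these back into the expansion inequality and using $\nu\ll\phi$ (so $\nu/(\phi^2\alpha)\ll\rho'$) gives $|W|\leq\frac12\rho'n'$ and $|X\cup W|\leq\frac12\rho'n'$. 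With $X$ and $W$ this small, the bipartition $(Y,U\setminus Y)$ is forced to be $\rho'$-balanced with at most $\rho'n'^2$ bad edges, contradicting the hypothesis. The paper also inserts a short preliminary reduction to $\frac14\alpha n\leq|S|\leq|U|-\frac14\alpha n$, which you omit; without it the extreme cases of $|S|$ near $\tau n'$ or $(1-\tau)n'$ need separate (easy) treatment.
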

	
	Since we can output $U_t,\overline{U_t}$ in polynomial time, these two claims prove Lemma~\ref{th:alg1}.
	
	\begin{proof}[\textbf{Proof} of Claim~\ref{Claim3}]
Since we are in case (a), note that $\Phi_{G_i}(S_i) \leq \phi$ for all $i=1, \ldots, t$ and so 
\begin{equation}
\label{eq:vol2}
e_G(S_i, U_i \setminus S_i) \leq \phi \vol_{G_i}(S_i) \leq \vol_G(S_i).
\end{equation}	
		Recall also that $U_t = U_0 \cup (\bigcup_{i=0}^{t-1}S_i)$.
		Using that volume is additive, i.e.\ $\vol_{G}(U_t) = \vol_G(U_0) + \sum_{i=0}^{t-1}\vol_{G}(S_i)$, we have
		\begin{align*}
			e_G(U_t,\overline{U_t}) 
= e_G(U_0, \overline{U_t}) + \sum_{i=0}^{t-1}e_G&(S_i,\overline{U_t})
\leq \vol_G(U_0) + \sum_{i=0}^{t-1}e_G(S_i, U_i \setminus S_i) \\
%&\leq \nu n'^2 + \sum_{i=0}^{t-1}\phi \min(\vol_{G}(S_i),\vol_{G}(\overline{S_i})) \\
			&\overset{\eqref{eq:volU0}, \eqref{eq:vol2}}{\leq} \nu n'^2 + \sum_{i=0}^{t-1}\phi \; \vol_{G}(S_i) \\
			&\leq \nu n'^2 + \phi \; \vol_G(U_t) \leq  \nu n'^2 + \phi |U_t|  n.	
			\end{align*}			
Therefore
\begin{align*}	e_G(U_t,\overline{U_t}) \leq \nu n'^2 + \phi |U_t|  n	\overset{\eqref{eq:Ut}}{\leq} \nu n'^2 +  \frac{2}{3}\phi  |U|  n 
&\overset{{\rm Prop}\, \ref{Claim1}}{\leq} \nu n'^2 + \frac{\phi }{\alpha-\sqrt{\rho}}n'^2 \\
&\overset{\nu, \phi \ll \rho'}{\leq} \frac{1}{2}\rho'n'^2.
\end{align*}		
		Hence $e_G(U_t,V\setminus U_t)\leq \frac{1}{2} \rho' n'^2+\rho n^2\leq \rho' n^2$
		since $U_t \subseteq U$ and $U$ is a $\rho$-component. Similarly  $e_G(\overline{U_t},V\setminus\overline{U_t})\leq \rho' n^2$.
		Also, $|U_t|$, $|\overline{U_t}| \geq \frac{1}{4}\rho'n$ by (a) and \eqref{eq:Ut}. However, by Proposition~\ref{Claim1}, we in fact have $|U_t|$, $|\overline{U_t}| \geq (\alpha-\rho'^2)n \geq \sqrt{\rho'}n$, so $U_t$ and $\overline{U_t}$ are $\rho'$-components.
	\end{proof}
	
	\begin{proof}[\textbf{Proof} of Claim~\ref{Claim4}]
		First some observations. Since case (b) holds, $|U_t|\leq \frac{1}{4}\rho'n'\leq \frac{1}{2}\tau n'\leq \frac{1}{3}|U|$ and $\phi_t = \Phi_{G_t}(S_t)>\phi$.
		
		Also, $\delta(G_t) = \delta(G[\overline{U_t}])
		\geq \min_{x \in \overline{U_t}} d_U(x) - |U_t| \geq  \frac{1}{2}\alpha n - \frac{1}{2}\tau n' \geq \frac{1}{3} \alpha n$, where the penultimate inequality follows from our choice of $U_0$.
		By Theorem~\ref{th:cheeger}, for all $R \subseteq V(G_t)=U\setminus U_t$ we have $\Phi_{G_t}(R) \geq \Phi(G_t) \geq \phi_t^2/4 \geq \phi^2/4$, i.e.\ 
		$$ e_{G_t}(R,R')\geq \frac{\phi^2}{4} \min(\vol(R),\vol(R') )\geq \frac{1}{12}\phi^2 \alpha n \min(|R|,|R'|)$$
		where $R' = \overline{U_t}\setminus R = U\setminus (U_t \cup R)$.
	Furthermore, for $R\subseteq U$ and $\overline{R}:=U\setminus R$, we have 
		\begin{align}
		e_{G[U]}(R,\overline{R}) \geq e_{G[U]}(R\setminus U_t,\overline{R}\setminus U_t) 
		&\geq \frac{1}{12}\phi^2 \alpha n \min(|R\setminus U_t|,|\overline{R}\setminus U_t|) \notag \\
		&\geq\frac{1}{12} \phi^2 \alpha n \left( \min(|R|,|\overline{R}|) - \frac{1}{4} \rho'n' \right).
				\label{Star}
		\end{align}
		We will now show that $G[U]$ is a $(\nu,\tau)$-expander by assuming that $G[U]$ does not expand and deducing that $G[U]$ is $\rho'$-close to bipartite, contradicting the premise of the lemma.
		
		Suppose there exists $ S\subseteq U$ with $\tau n' \leq |S| \leq (1-\tau)n'$ such that
		$N=\RN_{\nu,G[U]}(S)\text{ satisfies } |N|<|S|+\nu n. $
		Since $\tau n' \leq |S| \leq (1-\tau)n'$, we have $\frac{1}{4}\rho'n'\leq \frac{1}{2}\tau n' \leq \frac{1}{2}\min(|S|,|\overline{S}|)$ so by \eqref{Star}, we have
		\begin{equation}\label{StarStar}
			e_{G[U]}(S,\overline{S})\geq \frac{1}{24}\phi^2 \alpha n\min(|S|,|\overline{S}|).
		\end{equation}
		\begin{claim}
			We may assume $\frac{1}{4}\alpha n\leq |S| \leq |U|-\frac{1}{4}\alpha n$.
		\end{claim}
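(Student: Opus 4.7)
\medskip

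\noindent\textbf{Plan.} I would prove the claim by treating the two extremes of $|S|$ separately, the ``upper'' extreme being essentially vacuous and the ``lower'' one requiring a replacement argument.

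\smallskip

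\noindent\emph{Upper extreme: $|S|>|U|-\tfrac{1}{4}\alpha n$.}
I claim this case cannot occur. Recall that $U_0$ is the set of vertices of $G[U]$-degree at most $\tfrac{1}{2}\alpha n$ and that the opening of the proof already gives $|U_0|\le \tfrac{1}{2}\alpha\nu n'\le \nu n$. For every $v\in U\setminus U_0$ we have
\[
 d_S(v)\;\ge\;d_U(v)-|U\setminus S|\;>\;\tfrac{1}{2}\alpha n-\tfrac{1}{4}\alpha n\;=\;\tfrac{1}{4}\alpha n\;\ge\;\nu n,
\]
using the hierarchy $\nu\ll\alpha$, so $v\in \RN_{\nu,G[U]}(S)$. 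Hence $|\RN_{\nu,G[U]}(S)|\ge n'-|U_0|$. Combined with the assumed violation $|\RN_{\nu,G[U]}(S)|<|S|+\nu n$, this forces $|U\setminus S|<|U_0|+\nu n\le 2\nu n$. But the hypothesis $|S|\le(1-\tau)n'$ says $|U\setminus S|\ge \tau n'\ge \tfrac{1}{2}\tau\alpha n$, and since $\tau\gg\nu$ this is a contradiction.

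\smallskip

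\noindent\emph{Lower extreme: $|S|<\tfrac{1}{4}\alpha n$.}
Here I would not try to derive a direct contradiction but instead replace $S$ with a larger bad set $S'$ of size in the desired range. Let $L:=U\setminus\bigl(S\cup \RN_{\nu,G[U]}(S)\bigr)$. Using $|S|+|\RN_{\nu,G[U]}(S)|<2|S|+\nu n<\tfrac{1}{2}\alpha n+\nu n$ together with Proposition~\ref{Claim1}(i) (so $|U|\ge(\alpha-\sqrt{\rho})n$), we have $|L|\ge \tfrac{1}{3}\alpha n$. Pick $Y\subseteq L$ with $|Y|=\lceil \tfrac{1}{4}\alpha n\rceil-|S|$ and set $S':=S\cup Y$, so $|S'|\in[\tfrac{1}{4}\alpha n,\tfrac{1}{4}\alpha n+1]$ and in particular $|S'|$ lies in the range required by the claim. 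Since every $v\in Y$ has $d_S(v)<\nu n$, only vertices that acquire many neighbours in $Y$ can newly join $\RN_{\nu,G[U]}(S')$; by choosing $Y$ carefully (greedily, or via a probabilistic argument and concentration), together with the fact that almost all vertices of $U$ have $G[U]$-degree close to $D$, one should be able to ensure the bound $|\RN_{\nu,G[U]}(S')|<|S'|+\nu n$ (possibly with a mild weakening of $\nu$ absorbed by the hierarchy $\nu\ll\rho'\ll\tau$). Then $S'$ is a bad set of the right size and we may work with it.

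\smallskip

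\noindent\emph{Main obstacle.} The upper extreme is a clean direct calculation. The delicate part is the lower extreme: controlling the growth of $\RN_{\nu,G[U]}$ when extending $S$ is not automatic, as each added vertex could in principle push many new vertices over the $\nu n$ threshold. The crux is to find a clean way to choose (or iteratively grow) $Y$ whose contribution to new robust-neighbours is bounded; this is the step where I expect the argument to require some actual work.
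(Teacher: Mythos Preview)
Your treatment of the upper extreme $|S|>|U|-\tfrac14\alpha n$ is correct and matches the paper's proof essentially verbatim.

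For the lower extreme $|S|<\tfrac14\alpha n$, however, your approach diverges from the paper's and contains a genuine gap. The paper does \emph{not} attempt to replace $S$ by a larger bad set; instead it derives a direct contradiction by double-counting $e_{G[U]}(S,\overline S)$. On one hand, regularity together with $U$ being a $\rho$-component gives
\[
e_{G[U]}(S,\overline S)\ \ge\ |S|(D-|S|)-\rho n^2\ \ge\ |S|(\alpha n-|S|)-\rho n^2,
\]
while on the other hand, splitting $\overline S$ according to membership in $N=\RN_{\nu,G[U]}(S)$ yields
\[
e_{G[U]}(S,\overline S)\ \le\ |N|\,|S|+\nu n^2.
\]
Combining and using $|S|\ge\tau n'$ gives $|N|\ge \alpha n-|S|-(\rho+\nu)n^2/|S|\ge |S|+\nu n$ when $|S|<\tfrac14\alpha n$, contradicting the assumed failure of expansion. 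Thus no such small $S$ exists at all.

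Your replacement strategy is both unnecessary and, as you yourself flag, incomplete. The sentence ``by choosing $Y$ carefully (greedily, or via a probabilistic argument and concentration)\dots one should be able to ensure'' is not a proof, and it is far from clear the step can be rescued: you are adding roughly $\tfrac14\alpha n$ vertices to $S$, and since almost every vertex of $U$ has $G[U]$-degree at least $\tfrac12\alpha n$, a set $Y$ of this size will typically push a linear number of vertices over the $\nu n$ threshold, regardless of how $Y$ is chosen inside $L$. Controlling this would require much more than concentration. The clean fix is to abandon the replacement idea and use the two-line edge count above.
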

		\begin{proof}[Proof of claim]
			If $|S|<\frac{1}{4}\alpha n$ then $e_G(S,\overline{S})\geq |S|(\alpha n - |S|)-\rho n^2$ and $e_G(S,\overline{S})\leq |N||S|+|U\setminus N| \nu n \leq |N||S| + \nu n^2$, so
%			 %MANUALFORMATING
%				$$|N||S| \geq\hspace*{0.5em} |S|(\alpha n - |S|)-\rho n^2 - \nu n^2. $$
combining these inequalities and rearranging, we obtain	
			\begin{align*}	
				|N| \geq \alpha n - &|S| - (\rho + \nu )\frac{n^2}{|S|}\geq \alpha n - |S| - (\rho + \nu )\frac{n^2}{\tau n'} \\
				&\overset{{\rm Prop}\,\ref{Claim1}}{\geq} \alpha n - \frac{1}{4}\alpha n - (\rho + \nu )\frac{n'}{\tau (\alpha -\sqrt{\rho})^2}  \geq \frac{1}{2}\alpha n' \geq |S|+\nu n,
			\end{align*}
			contradicting our choice of $S$.
			
			Similarly if $|S| > |U| - \frac{1}{4}\alpha n$ recall 
			that by Proposition~\ref{Claim1} that all but the $\gamma n'$ 
			vertices in $U_0$ have degree at least $\frac{1}{2}\alpha n$ in $U$ and so for all $x \in  U \setminus U_0$, we have
			%Call those vertices $A$. Then all vertices outside $A$ have at least 
			$$d_S(x) \geq \frac{1}{2}\alpha n - |U\setminus S| \geq \frac{1}{4}\alpha n \geq \nu n.$$ 
			Hence $N\supseteq U\setminus U_0$ and so $|N|\geq |U| - |U_0| \geq (1- \nu)n' \geq |S|+\nu n'$, a contradiction. This proves the claim.
		\end{proof}
		
		\begin{figure}
			\centering
			\includegraphics[width=0.3\textwidth]{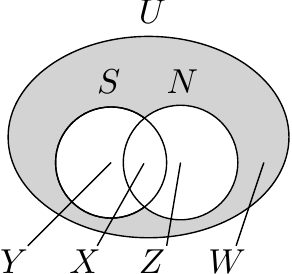}
			\caption{Overview of subsets mentioned in the coming section.}
			\label{fig}
		\end{figure}
		We continue with the proof of Claim~\ref{Claim4}.
		We define $Y = S\setminus N$, $X = S \cap N$, $Z = N\setminus S$, $W = U\setminus (S\cup N)$; see Figure~\ref{fig}.
		Since each vertex in $Y$ has at most $ \nu n$ neighbours in $S$ and since $G$ is $D$-regular and $U$ is a $\rho$-component we have $e_G(Y,\overline{S}) \geq D |Y| - \rho n^2 - \nu n^2$. Using this, we obtain
		\begin{align}
		\label{(eYZ)}
 e_G(Y,Z) = e_G(Y,\overline{S})-e_G(Y,W)			
			&\geq D |Y|- \rho n^2 - \nu n^2 - |W|\nu n \notag \\ &\geq D |Y| - 3\nu n^2.
		\end{align}
On the other hand $e_G(Z,Y) \leq D|Z|$, which together with \eqref{(eYZ)} implies after rearranging that $|Z| \geq |Y| - \frac{3\nu}{\alpha}n$. Also $|Z| \leq |Y|+\nu n$;  otherwise $S$ does not violate $(\nu,\tau)$-expansion. 
Hence we have shown
		\begin{equation}
			\label{(1)}
			|Y| - \frac{3\nu}{\alpha}n\leq |Z| \leq  |Y|+\nu n.
		\end{equation}
		Considering $W$ (and taking $\overline{W}:= U \setminus W$), we see
		$$e_G(W,\overline{W})= e_G(W,S)+e_G(Z,W) \leq e_G(W,S)+(D|Z| - e_G(Z,Y))$$
	%	$$ \overset{\eqref{(1)},\eqref{(eYZ)}}{\leq} \nu n^2 + \alpha n(|Y|+\nu n)- (\alpha n|Y| - 3\nu n^2) $$ 
		\begin{equation}
		\label{(eWW)}
		\hspace*{0.71em} %MANUALFORMATING
		\overset{\eqref{(1)},\eqref{(eYZ)}}{\leq} \nu n^2 + D(|Y|+\nu n)- (D|Y| - 3\nu n^2)\leq 5 \nu n^2,
		\end{equation}
		as well as
		\begin{align*}
			\frac{1}{12}\phi^2\alpha n \min(|W|,|\overline{W}|)&-\frac{1}{48}\phi^2\alpha\rho'nn'\overset{\eqref{Star}}{\leq} e_G(W,\overline{W})\overset{\eqref{(eWW)}}{\leq}5\nu 
			n^2. 
	%		\min(|W|,|\overline{W}|)&\leq \frac{30\nu n}{\lambda\alpha}+\rho'n'\leq 2\rho'n'.
		\end{align*}
		%Combining, we have $$\frac{1}{6}\lambda\alpha n \min(|W|,|\overline{W}|)\leq 5\nu n^2+\frac{1}{6}\lambda \alpha\rho'nn'$$
		Since $|\overline{W}|\geq |S|\geq \tau n'>2\rho'n'$, we must have 
		\begin{equation}
			\label{(sizeW)}
			|W| \leq \frac{60\nu n}{\phi^2 \alpha}+ \frac{1}{4}\rho'n'\leq \frac{1}{2}\rho'n'.
		\end{equation}
		Now consider $Y\cup Z$. We have 
		\begin{align}
		e_G(Y\cup Z, \overline{Y\cup Z}&) \leq D |Y\cup Z| - 2e_G(Y,Z) \notag \\ 
		&\overset{\eqref{(1)},\eqref{(eYZ)}}{\leq}D (2|Y|+\nu n) - 2 (D|Y| - 3\nu n^2)	
		\leq 7\nu n^2.
		\label{(eYZYZ)}
		\end{align}
		Combining this with an application of \eqref{Star}
		$$\frac{1}{12}\phi^2 \alpha n(\min(|Y\cup Z|,|\overline{Y\cup Z}|)- \frac{1}{4}\rho'n')\overset{\eqref{Star}}{\leq}e_G(Y\cup Z,\overline{Y\cup Z})\overset{\eqref{(eYZYZ)}}{\leq} 7\nu n^2,$$
and hence
		$$ \min(|Y\cup Z|,|\overline{Y\cup Z}|)\leq 84\frac{\nu n}{\phi^2\alpha}+ \frac{1}{4}\rho'n' \leq \frac{1}{2}\rho'n'.$$
		If $|Y\cup Z|\leq \frac{1}{2}\rho'n'$, then
		\begin{align*}
		|S| &= |U|-|W|-|Z| \geq |U|-|W|-|Y\cup Z| \\ &\overset{\eqref{(sizeW)}}{\geq} n'-\frac{1}{2}\rho'n'-\frac{1}{2}\rho'n'  \geq (1-\tau)n',	
		\end{align*}
		a contradiction. So we have 
		\begin{equation}
			\label{(sizeYZ)}
			|\overline{Y\cup Z}| \leq \frac{1}{2}\rho'n'.
		\end{equation}
		Finally we show that $Y$,$\overline{Y}$ gives a partition that shows $G[U]$ is $\rho'$-close to bipartite, giving a contradiction. Note that $|\overline{Y}| = |Z| + |\overline{Y\cup Z}|$, so
	
		\begin{align*}
		|Y| - \frac{3\nu}{\alpha}n		 \overset{\eqref{(1)}}{\leq}|Z|\leq |\overline{Y}|= |Z| +
		|\overline{Y\cup Z}|\overset{\eqref{(1)}, \eqref{(sizeYZ)}}{\leq}& |Y|+\nu n + \frac{1}{2}\rho'n' 	\\
		&\leq |Y| + \frac{3}{4}\rho'n'.
		\end{align*}
		Therefore,
		\begin{equation}
		\label{(sizeYY)}
			||Y|-|\overline{Y}||\leq \frac{3}{4}\rho'n'.			
		\end{equation}

		\noindent If $\rho'$ is small enough, e.g. $\rho' \leq \frac{1}{10}$, this also gives us $|Y|$, $|\overline{Y}|\geq \sqrt{\rho'}n'.$ Also
		
		%$$\text{Also }
		$$ e_G(Y,V\setminus\overline{Y})+e_G(\overline{Y},V\setminus Y)\leq D |Y\cup \overline{Y}|-2e_G(Y,\overline{Y})$$
		$$\leq D |U| - 2e_G(Y,Z) \overset{\eqref{(eYZ)}}{\leq} D n' -2(D|Y|-3\nu n^2)$$
		$$\overset{\eqref{(sizeYY)}}{\leq} Dn' - D |Y| - D \left( |\overline{Y}|-\frac{3}{4}\rho'n' \right) +6\nu n^2 \leq \frac{4}{5} D\rho'n' \leq \rho'n'^2. $$
		So  $Y$,$\overline{Y}$ is a partition of $U$ showing $G[U]$ is $\rho'$-close to bipartite, a contradiction,
		completing the proof of the claim and the lemma.
	\end{proof}
\end{proof}

\begin{proof}[Proof of Lemma~\ref{th:alg2}]
	
		The idea is to repeatedly apply the algorithm in Theorem~\ref{th:trevisan} and iteratively remove vertices that are assigned to bipartite parts until the remaining induced graph is either small or far from bipartite.
	
	We choose $\beta$ such that $\rho \ll \beta \ll \rho'$.
	Set $U_0=\emptyset$, and given $U_i$, let $G_i=G[U\setminus U_i]$. Let $y$ be obtained from running the algorithm in Theorem~\ref{th:trevisan} on $G_i$. We set $U_{i+1}=U_i\cup A_i \cup B_i$, where $A_i:=\{v\mid y_v=1\}$ and $B_i:=\{v\mid y_v=-1\}$ and we set $\beta_i = \beta(y)$.  Note that $G_{i+1}\subset G_i$. We continue until either
	\begin{enumerate}[noitemsep]
		\item[(a)] $|G_i|\leq \rho'n$ or
		\item[(b)] $\beta_i \geq \beta$.
	\end{enumerate}
	Let $t$ be the first index where (a) or (b) occurs.	
	\begin{claim}
		\label{cl:closebip}
		If $|G_t|\leq \rho'n$, then $G[U]$ is $\rho'$-close to bipartite.
	\end{claim}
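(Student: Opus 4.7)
The plan is to construct an explicit bipartition $(A, B)$ of $U$ from the pieces collected during the iteration and verify it witnesses $G[U]$ being $\rho'$-close to bipartite. Set $A^* := \bigcup_{i<t} A_i$ and $B^* := \bigcup_{i<t} B_i$, so $A^* \cup B^* = U \setminus V(G_t)$; partition $V(G_t) = V_A \cup V_B$ greedily by placing $v \in V(G_t)$ in $V_A$ iff $d_{A^*}(v) \leq d_{B^*}(v)$, and set $A := A^* \cup V_A$, $B := B^* \cup V_B$.

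The main step is to bound $\mathrm{Bad} := e_G(A, \overline{B}) + e_G(B, \overline{A}) = 2 e_G(A) + 2 e_G(B) + e_G(U, V(G) \setminus U)$ using equation~\eqref{eq:beta}. For each $i < t$ the condition $\beta_i < \beta$ gives
\[
T_i := 2 e_{G_i}(A_i) + 2 e_{G_i}(B_i) + e_{G_i}\bigl(A_i \cup B_i, V(G_{i+1})\bigr) < \beta\, \vol_{G_i}(A_i \cup B_i).
\]
Summing over $i$ and using disjointness of the $A_i \cup B_i$, we get $S := \sum_{i<t} T_i \leq \beta \vol_G(U) \leq \beta D n$. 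Attributing each edge of $G[U]$ to its contribution in $S$ yields
\[
S = 2\sum_i e_G(A_i) + 2\sum_i e_G(B_i) + \sum_{i<j}\bigl[e_G(A_i, A_j) + e_G(B_i, B_j)\bigr] + \sum_{i \neq j} e_G(A_i, B_j) + e_G(U_t, V(G_t)),
\]
and in particular $e_G(U_t, V(G_t)) \leq S \leq \beta D n$. This sharpening is the crux of the argument: the naive bound $e_G(U_t, V(G_t)) \leq D|V(G_t)| \leq D\rho' n$ already saturates the budget for $\mathrm{Bad}$, whereas $\beta D n$ is much smaller.

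From the expansion above, $\sum_i e_G(A_i) + \sum_i e_G(B_i) + \sum_{i<j}[e_G(A_i,A_j) + e_G(B_i,B_j)] \leq S$; the greedy assignment gives $\sum_i e_G(A_i, V_A) + \sum_i e_G(B_i, V_B) \leq \tfrac{1}{2} e_G(U_t, V(G_t)) \leq \tfrac{1}{2}\beta D n$; and $e_G(V_A) + e_G(V_B) \leq |V(G_t)|^2/2 \leq \tfrac{1}{2}(\rho')^2 n^2$. Summing these three contributions to $e_G(A) + e_G(B)$ and multiplying by two yields
\[
\mathrm{Bad} \leq 3\beta D n + (\rho')^2 n^2 + \rho n^2 \leq \tfrac{1}{2}\alpha \rho' n^2
\]
under the hierarchy $\rho \ll \beta \ll \rho' \ll \alpha$.

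Finally, from $D|A| = 2 e_G(A) + e_G(A,B) + e_G(A, V(G) \setminus U)$ and the analogous identity for $B$, subtracting gives $D\bigl||A| - |B|\bigr| \leq \mathrm{Bad}$, so $||A| - |B|| \leq \rho' n / 2 \leq \rho' n$. Combined with $|U| \geq (\alpha - \sqrt{\rho})n$ from Proposition~\ref{Claim1}, this forces $|A|, |B| \geq (|U| - \rho' n)/2 \geq \sqrt{\rho'}\,n$ when $\rho'$ is small enough compared to $\alpha$. Hence $G[U]$ is $\rho'$-close to bipartite with bipartition $(A, B)$. The hard part will be the edge-bookkeeping that yields $e_G(U_t, V(G_t)) \leq S$; once that identity is in place everything else is routine bounding.
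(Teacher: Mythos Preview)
Your proof is correct, and the edge-bookkeeping identity you write down for $S$ is accurate. The approach, however, differs from the paper's in one notable respect.

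The paper builds its bipartition by allowing each pair $(A_j,B_j)$ to be swapped: it first proves an intermediate claim that $\bigl||A_j|-|B_j|\bigr|\le\rho'n$ for every $j$ (via a short contradiction argument using $D$-regularity and~\eqref{eq:Uj}), and then greedily flips pairs $A_j\leftrightarrow B_j$ until $\bigl||A|-|B|\bigr|\le\rho'n$. The bound on $e_G(A)+e_G(B)$ is obtained by the cruder observation that every bad edge is absorbed either in some $E_j$, in $\vol_{G[R]}(R)$, or in $e_G(U,\overline{U})$; no finer decomposition of $S$ is needed.

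You instead freeze $A^*=\bigcup_i A_i$, $B^*=\bigcup_i B_i$ and recover the size balance at the very end from the identity $D\bigl||A|-|B|\bigr|\le\mathrm{Bad}$. This is arguably cleaner: it avoids the intermediate per-block balance claim and the swapping procedure entirely, at the cost of the explicit computation of $S$ (which you correctly identify as the crux). The greedy placement of $V(G_t)$ is a nice touch, though since $|V(G_t)|\le\rho'n$ even an arbitrary placement would work for the edge count. Both arguments are constructive, so neither has an algorithmic advantage over the other.
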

	\begin{claim}
		\label{cl:notclosebip}
		If $\beta_t > \beta$ and $|G_t|\geq \rho'n$, then $G[U]$ is not $\rho$-close to bipartite.
	\end{claim}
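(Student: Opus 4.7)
The plan is to argue by contradiction. Suppose $G[U]$ is $\rho$-close to bipartite with bipartition $A,B$; I will construct $y \in \{-1,0,1\}^{V(G_t)}\setminus\{\mathbf{0}\}$ with $\beta_{G_t}(y) < \beta^2/4$. This contradicts the bound $\beta(G_t) \geq \beta_t^2/4 \geq \beta^2/4$, which follows from the ``in particular'' clause of Theorem~\ref{th:trevisan} applied to $G_t$ together with the stopping condition $\beta_t \geq \beta$.

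The natural test vector is the restriction of the indicator of the hypothesised bipartition: set $y_v = 1$ for $v \in A \cap V(G_t)$ and $y_v = -1$ for $v \in B \cap V(G_t)$. Since $V(G_t) \subseteq U = A \cup B$, every coordinate is nonzero, so by \eqref{eq:beta} (with empty third part),
\[
 \beta_{G_t}(y) \;=\; \frac{e_{G_t}(A\cap V(G_t)) + e_{G_t}(B\cap V(G_t))}{e(G_t)} \;\leq\; \frac{\rho n^2}{e(G_t)},
\]
where the inequality uses $e_G(A) + e_G(B) \leq e_G(A,\overline{B}) + e_G(B,\overline{A}) \leq \rho n^2$, the $\rho$-close-to-bipartite assumption.

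The bulk of the work is establishing $e(G_t) = \Omega(\alpha \rho' n^2)$. I would start from $\vol_G(V(G_t)) = D|V(G_t)| \geq \alpha \rho' n^2$ and control the edges of $G$ leaving $V(G_t)$. Those going to $V\setminus U$ contribute at most $\rho n^2$ because $U$ is a $\rho$-component. To bound edges going to $U_t$, I exploit the iterative structure: for each $i < t$ the algorithm output $y_i$ satisfies $\beta(y_i) < \beta$, so \eqref{eq:beta} applied to $G_i$ gives
\[
e_{G_i}(A_i \cup B_i,\, V(G_i) \setminus (A_i \cup B_i)) \;\leq\; \beta \,\vol_{G_i}(A_i\cup B_i) \;\leq\; \beta\,\vol_G(A_i\cup B_i).
\]
Because $V(G_t) \subseteq V(G_i)\setminus(A_i\cup B_i)$ for each $i < t$ and the sets $A_i\cup B_i$ are pairwise disjoint subsets of $U$, summing over $i$ and using that $G$ is $D$-regular yields $e_G(V(G_t), U_t) \leq \beta\,\vol_G(U_t) \leq \beta D n$. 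Combining, $2e(G_t) \geq \alpha \rho' n^2 - \rho n^2 - \beta D n \geq \alpha \rho' n^2 / 2$ under the hierarchy $\rho \ll \beta \ll \rho' \ll \alpha$, whence $\beta_{G_t}(y) \leq 4\rho/(\alpha \rho') < \beta^2/4$ by the same hierarchy, the desired contradiction.

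The main obstacle is the bound on $e_G(V(G_t), U_t)$: a naive per-step estimate would carry a multiplicative factor of $t$, which could be as large as $n$ and thus useless. The key observation that avoids this is that the numerator of $\beta(y_i)$ in \eqref{eq:beta} already contains \emph{exactly} the crossing edges $e_{G_i}(A_i\cup B_i, V(G_{i+1}))$, so the single inequality $\beta(y_i) < \beta$ bounds them in terms of $\vol_{G_i}(A_i\cup B_i)$; disjointness of the removed blocks then telescopes the per-step bounds into the clean global bound $\beta D n$, in the same spirit as the volume-additivity argument used in the proof of Claim~\ref{Claim3} for Lemma~\ref{th:alg1}.
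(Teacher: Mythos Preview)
Your proof is correct and takes essentially the same approach as the paper: both lower-bound $e(G_t)$ (equivalently $\vol_{G_t}(V(G_t))$) via the telescoping estimate $e_G(U_t,V(G_t))\le \beta\,\vol_G(U_t)\le \beta Dn$ (this is exactly \eqref{eq:Uj}, proved in Claim~\ref{cl:closebip}) and then invoke Theorem~\ref{th:trevisan} to force any bipartition of $V(G_t)$ to have more than $\rho n^2$ monochromatic edges. The only difference is cosmetic---the paper argues directly using the auxiliary quantities $\beta'(G_t)$ and $\overline{\beta}(G_t)$, whereas you phrase it as a contradiction by exhibiting the test vector $y$ induced by the hypothetical bipartition.
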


Note that these two claims together prove the lemma since we can compute the $\beta_i$ and the $G_i$ in polynomial time (and for the first claim, the proof will show how to compute the corresponding partition). 	
	
	\begin{proof}[\textbf{Proof} of Claim~\ref{cl:closebip}]
		Let $R = U \setminus U_t$, i.e.\ the set of vertices that are not part of some $A_j$ or $B_j$ for $j \leq t$. Note that $|R| \leq \rho'n$.
%		At each step $j\leq i$, Theorem~\ref{th:trevisan} gives $\beta(G_j)\leq \sqrt{2(2-\lambda_n^{(j)})}\leq \sqrt{2 \lambda}$. Then
For each $j \leq t$, using the definition of $A_j, B_j$ and \eqref{eq:beta}, we have
		%Note that the numerator in the definition of $\beta(G_j)$ counts twice the number of edges each between vertices assigned to $1$ or $-1$ and once the number of edges between vertices assigned $0$ and $\pm1$, so we have
$$E_j := 2e_{G_j}(A_j)+2e_{G_j}(B_j)+e_{G_j}(A_j\cup B_j,U\setminus U_{j+1})  \leq \beta \text{ }\vol_{G_j}(A_j\cup B_j).$$
		
		First we note that for each $j \leq t$, we have
\begin{align}
e_G(U_j, U \setminus U_j) 
\leq \sum_{i=0}^{j-1}e_G(A_i \cup B_i, U \setminus U_{i+1}) 
%&\leq \sum_{i=0}^{j-1} e_G(A_i \cup B_i, U \setminus U_{i+1}) \notag \\ 
&\leq \beta \sum_{i=0}^{j-1} \vol_{G_i}(A_i \cup B_i)\notag \\ 
&\leq \beta \vol_G(U_j) \leq \frac{1}{10} \rho'Dn, \label{eq:Uj}
\end{align}
where the final inequality follows by our choice of $\beta \ll \rho'$ and $\vol_G(U_j) \leq D n$.
In particular, for each $j<t$, we have 
$$e_G(A_j, U_j) \leq e_G(U_j, U \setminus U_j)  \leq \frac{1}{10} \rho' Dn.$$
	Next, we claim that for each $j$, $||A_j|-|B_j||\leq \rho'n$. Assume for a contradiction that $|A_j|-|B_j|\geq \rho'n$ for some $j$. First we note that 
		\begin{align*}
			e_{G_j}(A_j,\overline{B_j})
			&\geq (|A_j| - |B_j|)D - e_G(U,\overline{U})-e_G(A_j,U_{j}) \\
			&\geq \rho' D n - \rho n^2 -  \frac{1}{10} \rho' Dn 
			\geq \frac{1}{2}\rho'D n, 
\end{align*}
using $\rho \ll \rho'$ for the last inequality.
On the other hand we have $e_{G_j}(A_j,\overline{B_j}) \leq e_G(A_j, U_j) \leq \frac{1}{10}\rho' D n$			
		a contradiction.
		
%		$$e_{G_j}(A_j,\overline{B_j})\geq \rho'nD-e(U,\overline{U})-e(A_j,U_{j})$$
%		$$\geq \rho'\alpha n^2 -\rho n^2 -\underbrace{\sqrt{2\lambda}}_{\ll \rho'}\underbrace{\vol(U_{j-1})}_{\leq n^2}\geq \underbrace{\frac{1}{2}\rho'\alpha}_{\geq \sqrt{2\lambda}}n^2 \geq \sqrt{2\lambda }(\vol(U_{j+1}))$$
%		
%		$$\geq E_j+ \underbrace{\sum_{k=0}^{j-1}E_k}_{\geq e(A_j,U_j)} \geq e_{G_j}(A_j,\overline{B_j}),$$
%		
%		But $\sqrt{2\lambda}\text{ }\vol(A_j\cup B_j) \geq E_j $ and $\sqrt{2\lambda}\text{ }\vol(U_j) \geq \sum_{k=0}^{j-1}E_k \geq e(A_j,U_j)$, so 
%		
%		
%		$$\sqrt{2\lambda }(\vol(A_j\cup B_j)+\vol(U_j)) \geq E_j+e(A_j,U_j)\geq  e_{G_j}(A_j,\overline{B_j}),$$

By the preceding claim, we can form a partition $A,B$ of $U$ such that (i) for each $j < t$, either $A_j \subseteq A \wedge B_j \subseteq B$ or $A_j \subseteq B \wedge B_j \subseteq A$ and (ii) $||A| - |B|| \leq \rho'n$. Indeed we can start with an arbitrary partition satisfying (i) and then iteratively swap suitable $A_j$ and $B_j$ if this reduces the value of $||A| - |B||$.(Note that $A$ and $B$ also contain vertices of $R$ (i.e.\ vertices not belonging to any $A_j$ or $B_j$) that can be freely moved to reduce  $||A| - |B||$). It is easy to see $A,B$ can be computed in polynomial time and we shall see below that this partition demonstrates that $G[U]$ is $\rho'$-close to bipartite.	
		
	%	We assemble the bipartition with $A^*=\bigcup_{0\leq j<i} A_j$, $B^*=\bigcup_{0\leq j<i} B_j$, swapping $A_j$ and $B_j$ as needed to ensure $||A^*|-|B^*||\leq \rho'n$; this is possible by the preceding claim and greedily choosing whether to swap. 		%TODO this next sentence could still be better.
	%	We then add the vertices from $R$, one by one, to whichever is smaller of $A^*$ or $B^*$. This way we construct $A$ and $B$ such that $||A|-|B||\leq \rho'n$.
		To see this, we count edges not in $E_G(A,B)$. We have
		$$e_G(A)+e_G(B) +e_G(A\cup B, \overline{U}) \leq \sum_{j=0}^{i-1}E_j +\vol_{G[R]}(R)+e_G(U, \overline{U})$$
		$$\leq \underbrace{\beta}_{\ll \rho'}\text{ }\underbrace{\vol_{G[U]}(U\setminus R)}_{\leq n^2}+ (\rho'n)^2 + \rho n^2  \leq \rho'n^2.$$
	\end{proof}
	
	\begin{proof}[\textbf{Proof} of Claim~\ref{cl:notclosebip}]
		
		Define 
		\begin{align*}
			\beta'(G):=&\min_{y\in \{-1,1\}^{V(G)}}\frac{\sum_{uv\in E(G)} |y_u+y_v|}{\sum_{v\in V(G)} d_G(v)  |y_v|} \geq \beta(G),\\
			\overline{\beta}(G):=&\min_{A,B \text{ bipartition of }G}e_G(A)+e_G(B).
		\end{align*}
		
		Then we have $\overline{\beta}(G[U])\geq \overline{\beta}(G_t)$ and recalling that $V(G_t) = U \setminus U_t$, we have
		\begin{equation}
		\label{eq:4.1}
		2\frac{\overline{\beta}(G_t)}{\vol_{G_t}(U \setminus U_t)}= \beta'(G_t)\geq \beta(G_t)
		\geq \frac{\beta_t^2}{4}\geq \frac{\beta^2}{4},
		\end{equation}
		where we use the definition of $\beta_i$ and Theorem~\ref{th:trevisan}.
%		As $2-\lambda_n^{(j)}\leq \lambda$ for $j<i$, we have
%		\begin{equation}
%		\label{eq:4.2}
%		e_{G_i}(U\setminus U_i,A_j\cup B_j)\leq e_{G_{j+1}}(U\setminus U_{j+1},A_j\cup B_j) \leq \sqrt{2\lambda}\text{ }\vol_{G_j}(A_j\cup B_j).
%		\end{equation}
%		Summing up \eqref{eq:4.2} for $0\leq j<i$ we have
%In addition, examining \eqref{eq:Uj}, we have
%		\begin{equation}
%		\label{eq:4.3}
%		e_G(U_t,U\setminus U_t) \leq  \frac{1}{10} \rho' Dn.
%		\end{equation}
%		where we use additivity of volume. We now see
Then we have
		\begin{align*}
			\vol_{G_t}(U \setminus U_t) &\geq D|U \setminus U_t|-\rho n^2 -e_G(U_t,U\setminus U_t) \\
			&\geq \rho' D n -\rho n^2 - \frac{1}{10} \rho' Dn \geq \frac{1}{2} \rho' Dn,
		\end{align*}
		where we have used that $U$ is a $\rho$-component, \eqref{eq:Uj}, and $\rho \ll \rho'$.
		Combining with \eqref{eq:4.1} we see
		$$\overline{\beta}(G)\geq \frac{\beta^2}{8}\text{ }\vol_{G_t}(U \setminus U_t) \geq \frac{\beta^2}{16}  \rho' Dn>\rho n^2.$$	
		\end{proof}
		This completes the proof of the lemma.
\end{proof}

\begin{proof}[Proof of Lemma~\ref{th:alg3}]
	Fix $\phi$ such that $\nu \ll \phi \ll \rho'$.
	As in Lemma~\ref{th:alg1}, we use algorithm in Theorem~\ref{th:cheeger} to iteratively find poorly connected subgraphs of $G[U]$ and remove them. 
	
In polynomial time, we can find $S_0, \ldots, S_{t-1}$, $U_0, \ldots, U_t$, and $\phi_1, \ldots, \phi_t$, which are defined and found in exactly the same way as in the proof of Lemma~\ref{th:alg1}, so again, we have $\phi_t > \phi$ or $|U_t|\geq \frac{1}{3}|U|$. There are two cases:
	\begin{enumerate}[noitemsep]
		\item[(a)] $|U_t|> \frac{1}{4}\rho'n'$ and
		\item[(b)] $|U_t|\leq \frac{1}{4}\rho'n'$.
	\end{enumerate}
	\begin{claim}
		In case (a), $U_t$, $\overline{U_t}$ are $\rho'$-components.
	\end{claim}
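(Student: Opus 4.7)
The plan is to follow the template of Claim~\ref{Claim3} in Lemma~\ref{th:alg1} verbatim, since the proof explicitly says that the sequences $U_0, S_0, U_1, S_1, \ldots, U_t$ here are constructed in exactly the same way. First I would note that $G[U]$ being $\rho$-close to bipartite with bipartition $A,B$ makes $G[U]$ a $\rho$-component: we have $e_G(U, V\setminus U) \leq e_G(A,\overline{B}) + e_G(B,\overline{A}) \leq \rho n^2$ and $|U| = |A|+|B| \geq 2\sqrt{\rho}n$. This lets me reuse Proposition~\ref{Claim1} and every bound involving the $\rho$-component property of $U$ from Lemma~\ref{th:alg1}.

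Next, I would establish the edge bound by the same volume decomposition used in the proof of Claim~\ref{Claim3}. For each $i < t$ the stopping criterion was not triggered, so $\Phi_{G_i}(S_i) \leq \phi$, giving $e_G(S_i, U_i \setminus S_i) \leq \phi\,\vol_G(S_i)$. Combining this with $\vol_G(U_0) \leq \nu n'^2$ (the analogue of \eqref{eq:volU0}, valid because $U_0$ was defined identically as the low-degree vertices of $G[U]$) and additivity of volume, I would obtain
\[
e_G(U_t, \overline{U_t}) \leq \vol_G(U_0) + \sum_{i=0}^{t-1} e_G(S_i, U_i \setminus S_i) \leq \nu n'^2 + \phi\,\vol_G(U_t) \leq \nu n'^2 + \phi |U_t| n.
\]
Using $|U_t| \leq \tfrac{2}{3}|U|$ (the analogue of \eqref{eq:Ut}), Proposition~\ref{Claim1}(i), and $\nu, \phi \ll \rho'$, this simplifies to $e_G(U_t, \overline{U_t}) \leq \tfrac{1}{2}\rho' n'^2$. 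Extending to $V$ via the $\rho$-component property of $U$ gives $e_G(U_t, V\setminus U_t) \leq \tfrac{1}{2}\rho' n'^2 + \rho n^2 \leq \rho' n^2$, and the symmetric calculation handles $\overline{U_t}$.

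Finally, for the size condition, case (a) gives $|U_t| \geq \tfrac{1}{4}\rho' n'$ and \eqref{eq:Ut} gives $|\overline{U_t}| \geq \tfrac{1}{3}|U|$; combined with the edge bound just established, running the argument of Proposition~\ref{Claim1}(i) on $U_t$ and $\overline{U_t}$ boosts these to $|U_t|, |\overline{U_t}| \geq (\alpha - \rho'^2)n \geq \sqrt{\rho'}n$, which is what the definition of a $\rho'$-component requires. Since the partition $U_t, \overline{U_t}$ is produced by the algorithm in polynomial time, this suffices.

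The main obstacle is essentially bookkeeping rather than a new idea — nothing in Claim~\ref{Claim3}'s argument used the hypothesis that $G[U]$ is not $\rho'$-close to bipartite (that was only used in Claim~\ref{Claim4}), so the proof transfers cleanly. The only thing I would double-check is the hierarchy: I need $\phi$ satisfying $\nu \ll \phi \ll \rho'$ so that the final inequality $\nu n'^2 + \phi |U_t| n \leq \tfrac{1}{2}\rho' n'^2$ goes through, and I need $\rho' \ll \alpha$ so that the boost of the size bound via Proposition~\ref{Claim1}(i) lands safely above $\sqrt{\rho'}n$. Both are guaranteed by the stated hierarchy $\rho \ll \nu \ll \rho' \ll \tau \ll \alpha$ together with the intermediate choice of $\phi$ made at the start of the lemma's proof.
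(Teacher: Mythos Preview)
Your proposal is correct and is exactly the paper's approach: the paper simply notes that $G[U]$ is a $\rho$-component (which, as you observe, follows from being $\rho$-close to bipartite) and then says the proof of Claim~\ref{Claim3} carries over verbatim. You have spelled out the details that the paper leaves implicit, and your check that Claim~\ref{Claim3} never used the ``not $\rho'$-close to bipartite'' hypothesis is precisely the point.
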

	Noting that $G[U]$ is a $\rho$-component, the proof of Claim~\ref{Claim3} holds here as well.
	\begin{claim}
		In case (b), $G[U]$ is a robust bipartite $(\nu,\tau)$-expander with bipartition $A,B$.
	\end{claim}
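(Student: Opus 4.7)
The plan is to mirror the proof of Claim~\ref{Claim4} line by line, making only one substantive change at the very end. Suppose for a contradiction that $G[U]$ is not a bipartite robust $(\nu,\tau)$-expander with bipartition $A,B$, so there exists $S\subseteq A$ with $\tau|A|\leq |S|\leq (1-\tau)|A|$ such that $N:=\RN_{\nu,G[U]}(S)$ satisfies $|N|<|S|+\nu n$. Since $U$ being $\rho$-close to bipartite implies that $U$ is a $\rho$-component, the edge expansion estimate \eqref{Star} from the proof of Claim~\ref{Claim4} still applies: it depended only on case (b), the choice of $U_0$, and the $\rho$-component property of $U$, all of which hold here.

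Defining $Y:=S\setminus N$, $X:=S\cap N$, $Z:=N\setminus S$, and $W:=U\setminus(S\cup N)$, the entire chain of inequalities \eqref{(eYZ)}--\eqref{(sizeYY)} in Claim~\ref{Claim4} goes through unchanged. These estimates use only the hypothesis $|N|<|S|+\nu n$, the $\rho$-component property of $U$, the edge expansion bound \eqref{Star}, and the size bound $\min(|S|,|U\setminus S|)\gg \rho'n'$. For this last item, Remark~\ref{rem1} gives $|A|,|B|\geq(\alpha-2\sqrt{\rho})n\geq n'/2$, which combined with $|S|\geq\tau|A|$ and $|U\setminus S|\geq|B|$ gives $\min(|S|,|U\setminus S|)\geq\tau n'/2\gg\rho'n'$ as required. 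The auxiliary sub-claim $\tfrac14\alpha n\leq|S|\leq|U|-\tfrac14\alpha n$ is verified by the same calculation; note in fact that the upper case $|S|>|U|-\tfrac14\alpha n$ does not arise at all, since $|S|\leq(1-\tau)|A|\leq n'/2+\rho n/2<3n'/4\leq|U|-\tfrac14\alpha n$.

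The derivation thus yields $||Y|-|\overline Y||\leq\tfrac34\rho'n'$, and combined with $|Y|+|\overline Y|=n'$ this gives $|Y|\geq n'/2-\tfrac38\rho'n'$. In Claim~\ref{Claim4} the contradiction was reached at this point by observing that $Y,\overline Y$ demonstrates $G[U]$ is $\rho'$-close to bipartite, contradicting the standing hypothesis. In our setting $G[U]$ is already $\rho$-close to bipartite, so this does not yield a contradiction; instead we exploit the fact that $Y\subseteq S\subseteq A$. This forces $|Y|\leq(1-\tau)|A|\leq(1-\tau)(n'+\rho n)/2$, and combining with $|Y|\geq n'/2-\tfrac38\rho'n'$ and $n\leq n'/(\alpha-\sqrt{\rho})$ (from Proposition~\ref{Claim1}) yields $\tau\leq\tfrac34\rho'+\rho/(\alpha-\sqrt{\rho})$, which contradicts the hierarchy $\rho\ll\nu\ll\rho'\ll\tau$. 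The main obstacle in writing out the full proof is simply bookkeeping: one must verify that no intermediate estimate in Claim~\ref{Claim4} secretly relies on $U$ being far from bipartite. Since each such estimate invokes only the $\rho$-component structure of $U$ together with \eqref{Star} and the assumption $|N|<|S|+\nu n$, this verification is routine.
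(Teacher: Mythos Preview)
Your argument is correct, but it takes a genuinely different route from the paper's. The paper exploits the bipartite hypothesis from the very first step: setting $A^*=S$, $B^*=\RN_{\nu,G[U]}(A^*)\cap B$, $\hat A=A\setminus A^*$ and $\hat B=B\setminus B^*$, it bounds the cut $e(A^*\cup B^*,\hat A\cup\hat B)$ directly. The $\rho$-close-to-bipartite assumption controls $e(A^*,\hat A)+e(B^*,\hat B)$, the definition of $B^*$ controls $e(A^*,\hat B)$, and a short counting argument using regularity and $|B^*|<|A^*|+\nu n$ controls $e(B^*,\hat A)$; summing gives $e(A^*\cup B^*,\hat A\cup\hat B)\le 5\nu n^2$, while both sides of this cut have size at least $\tau|A|\ge\tau|U|/3$, contradicting \eqref{thm2:eq1} in a few lines. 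Your route instead re-runs the full Claim~\ref{Claim4} machinery, using only the $\rho$-component property of $U$ throughout, and invokes bipartiteness solely at the end via the containment $Y\subseteq S\subseteq A$, which forces $|Y|\le(1-\tau)|A|\approx(1-\tau)n'/2$ against the derived $|Y|\ge n'/2-\tfrac38\rho'n'$. What you gain is a pleasant unification---the same engine handles both the bipartite and non-bipartite cases, with only the terminal contradiction differing---whereas the paper's argument gains brevity and transparency, since the bipartite structure makes the relevant cut bound almost immediate and avoids the entire $Y,Z,W$ analysis.
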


Once again, the two claims together prove the lemma since we can compute $U_t, \overline{U}_t$ (which give the partition $U_1, U_2$ in the statement of the lemma) in polynomial time.	
	
	\begin{proof}
		As in \eqref{Star} in the proof of Claim~\ref{Claim4}, for $S\subseteq U$ and $\overline{S}=U\setminus S$ we have 
		\begin{equation}
			\label{thm2:eq1}
			e_{G[U]}(S,\overline{S})
			\geq \frac{1}{12} \phi^2 \alpha n \left( \min(|S|,|\overline{S}|) - \frac{1}{4} \rho'n' \right)
			\end{equation}
		We will show that $G[U]$ is a bipartite robust expander by assuming the existence of a non-expanding set and finding a contradiction. 
		
		Suppose $A^*\subseteq A$ with $\tau|A|\leq |A^*|\leq (1-\tau)|A|$, let $B^*:= \RN_{G[U]}(A^*)\cap B$ and assume $|B^*|< |A^*|+\nu n$. 
		Define $\hat{A}:=A\setminus A^*$ and $\hat{B}:= B \setminus B^*$.
		We will give an upper bound on $e_G(A^*\cup B^*,\hat{A}\cup \hat{B})$ that contradicts \eqref{thm2:eq1}. Indeed, we have (suppressing the subscript $G$)
\begin{align*}
e(A^*\cup B^*,\hat{A}\cup \hat{B}) 
&\leq e(A^*,\hat{A}) + e(B^*,\hat{B}) + e(A^*,\hat{B}) + e(B^*, \hat{A}) \\
&\leq \rho n^2 + \nu n^2 + e(B^*, \hat{A}),  
\end{align*}
where we used that $e(A^*,\hat{A}) + e(B^*,\hat{B}) \leq \rho n^2$ (since $G$ is $\rho$-close to bipartite) and $e(A^*,\hat{B}) < \nu n^2$ (since every vertex in $\hat{B}$ has at most $\nu n$ neighbours in $A^*$). In order to bound $e(B^*, \hat{A})$, we have
\begin{align*}
e(B^* \hat{A}) 
&\leq |B^*|D - e(B^*, A^*) \\
&\leq (|A^*| + \nu n)D - [|A^*|D - e(A^*, \hat{A}) - e(A^* \hat{B}) - e(A^*, \overline{U})] \\
&\leq \nu n|D| + \rho n^2 + \nu n^2
\leq \rho n^2  + 2 \nu n^2,
\end{align*}
 where we used that $e(A^*, \hat{B}) \leq \nu n^2$ (as above) and $e(A^*, \hat{A}) + e(A^*, \overline{U}) \leq \rho n^2$ (since $U$ is $\rho$-close to bipartite).
Combining, we obtain
		\begin{equation}
		\label{eq:eAB}
		e_G(A^*\cup B^*,\hat{A}\cup\hat{B})
		\leq 2\rho n^2+ 3\nu n^2 \leq 5 \nu n^2.
		\end{equation}
		 However, as $\min(|A^*\cup B^*|,|\hat{A}\cup\hat{B}|)\geq \tau |A| \geq \tau \frac{1}{3}|U|$ (using Remark~\ref{rem1}), with \eqref{thm2:eq1} we have
		$$ e_{G}(A^*\cup B^*,\hat{A}\cup\hat{B})\geq \frac{1}{12}\phi^2 \alpha n \left( \frac{1}{3}\tau |U|-\frac{1}{4}\rho'|U| \right)> 5\nu n^2,$$
using $|U| \geq \frac{1}{2}\alpha n$ by Proposition~\ref{Claim1} and our choice of parameters, which contradicts \eqref{eq:eAB}.
	\end{proof}
	This completes the proof of the lemma.
\end{proof}

\begin{proof}[Proof of Lemma~\ref{th:alg6}]
	Fix $\rho_1, \rho_2, \nu_2 $ such that $\rho \ll \nu \ll \rho_1 \ll \rho_2 \ll \nu_2 \ll \rho'$.
	We run Algorithm 2 on $U$ with $(\rho_1, \rho_2)$ playing the roles of $(\rho, \rho')$. The algorithm determines either that 
	\begin{itemize}[noitemsep]
	\item $G[U]$ is not $\rho_1$-close to bipartite, or
	\item $G[U]$ is $\rho_2$-close to bipartite (and outputs a bipartition $A,B$ of $U$ that demonstrates this).
	\end{itemize}
		
	In the first case, we apply Algorithm 1 with $(\rho, \nu, \rho_1)$ playing the roles of $(\rho, \nu, \rho')$ and the algorithm either concludes that $G[U]$ is a robust $(\nu, \tau)$-expander, or it outputs a partition $U_1, U_2$ of $U$ such that $U_1$ and $U_2$ are $\rho_1$-components and hence are also $\rho'$-components.
	
	In the second case, we apply Algorithm 3 with $(\rho_2, \nu_2, \rho')$ playing the roles of $(\rho, \nu, \rho')$ and the algorithm either concludes that $G[U]$ is a bipartite robust $(\nu_2, \tau)$-expander and hence also a bipartite robust $(\nu, \tau)$-expander (and it outputs a bipartition $A,B$ of $U$ to demonstrate this) or it outputs a partition $U_1, U_2$ of $U$ such that $U_1$ and $U_2$ are $\rho'$-components.
%	
%	If Algorithm 2 determines that $U$ is $\rho^*$-close to bipartite (and outputs corresponding bipartition $A,B$), it is also $\rho_2$-close to bipartite, and we are done.
%	Otherwise, run Algorithm 2 on $U$ with $\rho^*,\rho_2$ taking the roles of $\rho,\rho'$. Again, if $U$ is $\rho_2$-close to bipartite, we are done. Otherwise Algorithm 2 has determined that $U$ is not $\rho^*$-close to bipartite. Now run Algorithm 1 on $U$, with $\rho_1,\rho^*$ taking the role of $\rho,\rho'$.
%	Either Algorithm 1 determines that $U$ is a robust expander component, or the union of two $\rho^*$-components $U_1$, $U_2$. In both cases we are done, noting that in the latter case $U_1$ and $U_2$ are also $\rho_2$-components.
\end{proof}

\subsection{Recognising robust expanders}
\label{sec:digression}
In this subsection, we make a small digression to partially address a question of K{\"u}hn and Osthus from \cite{Robb2}; the result of this subsection will not be needed in the remainder of the paper. Using the Szemer{\'e}di Regularity Lemma, K{\"u}hn and Othus \cite{Robb2} give a polynomial time algorithm for deciding whether a graph\footnote{In fact their algorithm works more generally for digraphs} is a robust $(\nu, \tau)$-expander or whether it is not a $(\nu', \tau)$-expander (provided $\nu \ll \nu'$, which is what one is interested in all applications). They asked whether the use of the Szemer{\'e}di Regularity Lemma can be avoided, and we answer this affirmatively for regular graphs.

\begin{corollary}
	For each fixed choice of parameters $0 \leq \nu \ll  \nu' \ll \tau \ll \alpha < 1$ there exists a polynomial-time algorithm that does the following. Given  a $D$-regular $n$-vertex graph $G=(V,E)$, where $D \geq \alpha n$, the algorithm determines that either
	
	\begin{enumerate}[noitemsep]
		\item[(i)] $G$ is a robust $(\nu,\tau)$-expander, or
		\item[(ii)] $G$ is not robust $(\nu',\tau)$-expander,
	\end{enumerate}
and in case (ii) the algorithm finds a set $S \subseteq V$ such that $\tau n \leq |S| \leq (1- \tau)n$ and $|\RN_{\nu', G}(S)| \leq |S| + \nu' n$.
\end{corollary}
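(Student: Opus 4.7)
The plan is to apply Algorithm~4 (Lemma~\ref{th:alg6}) directly to $G$ and interpret each of its three possible outputs. First, choose auxiliary parameters with $\rho \ll \nu \ll \rho_* \ll (\nu')^2$; since the corollary assumes $\nu \ll \nu' \ll \tau \ll \alpha$, this fits into the hierarchy $\rho \ll \nu \ll \rho_* \ll \tau \ll \alpha$ demanded by Algorithm~4 (with $\rho_*$ playing the role of its $\rho'$). Note that $G$ is trivially a $\rho$-component of itself since $e_G(V,\overline{V}) = 0$ and $|V|=n \geq \sqrt{\rho}n$, so Algorithm~4 applies with $U = V$ and runs in polynomial time.

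If Algorithm~4 returns outcome~(i), then $G$ is a robust $(\nu,\tau)$-expander and we output (i).

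If Algorithm~4 returns outcome~(iii) with a partition $V = U_1 \cup U_2$ into $\rho_*$-components, I would output $S := U_1$. By Proposition~\ref{Claim1}(i), $|U_1|,|U_2| \geq (\alpha - \sqrt{\rho_*})n$, which together with $|U_1|+|U_2| = n$ places $|S|$ safely inside $[\tau n,(1-\tau)n]$. The bound $e_G(U_1,U_2) \leq \rho_* n^2$ implies that at most $\rho_* n/\nu'$ vertices of $V \setminus S$ can have $\geq \nu' n$ neighbours in $S$; combining with the trivial bound $|\RN_{\nu',G}(S)\cap S| \leq |S|$ and using $\rho_* \ll (\nu')^2$, we obtain $|\RN_{\nu',G}(S)| \leq |S| + \nu' n$.

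If Algorithm~4 returns outcome~(ii) with a bipartition $A,B$ of $V$ under which $G$ is a bipartite robust $(\nu,\tau)$-expander, then by inspection of the proof of Algorithm~4 (which invokes Algorithm~2 before Algorithm~3), $G$ is $\rho_*$-close to bipartite with bipartition $A,B$; in particular $||A|-|B|| \leq \rho_* n$ and $e_G(A) + e_G(B) \leq \rho_* n^2$. I would output $S := A$. The near-balance of $A$ and $B$ puts $|S|$ in $[\tau n,(1-\tau)n]$. An averaging argument against $e_G(A)$ shows that at most $2\rho_* n/\nu'$ vertices of $A$ have $\geq \nu' n$ neighbours inside $A$, while trivially $|\RN_{\nu',G}(A)\cap B| \leq |B| \leq |A|+\rho_* n$; summing and invoking $\rho_* \ll (\nu')^2$ yields $|\RN_{\nu',G}(A)| \leq |A| + \nu' n$.

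The only genuinely delicate point is threading the hierarchy: we need simultaneously the Algorithm~4 chain $\rho \ll \nu \ll \rho_* \ll \tau$ and the extra bound $\rho_* \ll (\nu')^2$, so that error terms of the form $\rho_* n/\nu'$ are absorbed into $\nu' n$. Both are compatible because the corollary's hypothesis $\nu \ll \nu'$ lets us set, e.g., $\rho_* := (\nu')^3$ and then $\nu$ correspondingly smaller. Polynomial running time is immediate from that of Algorithm~4.
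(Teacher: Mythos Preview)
Your approach is correct and follows essentially the same idea as the paper's. The one difference worth noting is that the paper applies Algorithms~1 and~2 directly rather than going through Algorithm~4, and this sidesteps your need to ``inspect the proof'' of Algorithm~4 in outcome~(ii). Concretely: the paper first runs Algorithm~2; if $G$ is $\rho_2$-close to bipartite with bipartition $A,B$, it immediately outputs the larger side $S=B$ as a witness to non-expansion, never invoking Algorithm~3 at all. If instead $G$ is not $\rho_1$-close to bipartite, it runs Algorithm~1, which either certifies robust $(\nu,\tau)$-expansion or produces two $\rho_1$-components, handled exactly as in your outcome~(iii). The point is that being close to bipartite already suffices to witness non-$(\nu',\tau)$-expansion; the bipartite-robust-expander conclusion of Algorithm~4 is irrelevant here, so the detour through Algorithm~3 is unnecessary. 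Your appeal to the internals of Algorithm~4 is not incorrect (the bipartition it outputs really does come from Algorithm~2 and hence witnesses $\rho_2$-closeness to bipartite with $\rho_2 \ll \rho_*$), but the paper's direct route is cleaner and keeps the relevant parameter chain $\rho \ll \nu \ll \rho_1 \ll \rho_2 \ll \nu'$ explicit rather than buried inside a black box.
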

\begin{proof}
The proof is a variation of the previous lemma. First choose parameters $1/n_0 \ll \rho \ll \nu \ll \rho_1 \ll \rho_2 \ll \nu' \ll \tau \ll \alpha \ll 1$. If $n \leq n_0$ then we check whether (i) or (ii) holds by exhaustive search in constant time. 

If $n \geq n_0$, we apply Algorithm~2 to $G$ with $(\rho_1,  \rho_2, V)$ playing the roles of $(\rho, \rho', U)$ (and thinking of $G = G[V]$ as a $\rho_1$-component of $G$). The algorithm determines that either
\begin{enumerate}[noitemsep]
\item[(a)] $G$ is $\rho_2$-close to bipartite (and gives a partition $A,B$ of $V$ showing this), or
\item[(b)] $G$ is not $\rho_1$-close to bipartite.
\end{enumerate}
In case (b) we apply Algorithm~1 with $(\rho, \nu, \rho_1, V)$ playing the roles of $(\rho, \nu, \rho', U)$ (and thinking of $G = G[V]$ as a $\rho$-component of $G$), and the algorithm determines that either
\begin{enumerate}[noitemsep]
\item[(bi)] $G=G[V]$ is a robust $(\nu, \tau)$-expander;
\item[(bii)] $U=V$ has a partition $U_1, U_2$ such that $U_1$, $U_2$ are $\rho_1$-components.
\end{enumerate}
In case (bi), we are done. In case (a) and (bii), we show $G$ is not a robust $(\nu', \tau)$-expander. Indeed, in case (a), assume that $|A| \leq |B|$. We have $|A|,|B| \geq \frac{1}{2} \alpha n \geq 2 \tau n$ by Remark~\ref{rem1}, so $\tau n \leq |B| \leq (1-\tau)n$. We cannot have that $|\RN_{\nu',G}(B)| \geq |B| + \nu' n$, for otherwise $|\RN_{\nu',G}(B) \cap B| \geq \nu' n$ and therefore $e_G(B,\overline{A}) = e_G(B) \geq \frac{1}{2}\nu'^2 n^2 > \rho_2 n^2$, contradicting that $G$ is $\rho_2$-close to bipartite.  So $G$ is not a robust $(\nu', \tau)$-expander in this case and the algorithm outputs $S = B$.

Similarly in case (bii) we know that $|U_1|, |U_2| \geq \frac{1}{2}\alpha n \geq 2 \tau n$ by Proposition~\ref{Claim1} and so $\tau n \leq |U_1| \leq (1- \tau)n$. Also, we cannot have that $|\RN_{\nu',G}(U_1)| \geq |U_1| + \nu' n$, for otherwise $|\RN_{\nu',G}(U_1) \cap U_2| \geq \nu' n$ and therefore $e_G(U_1,U_2) \geq \nu'^2 n^2 > \rho_1 n^2$, contradicting that $U_1$ is a $\rho_1$-component. So $G$ is not a robust $(\nu', \tau)$-expander in this case and the algorithm outputs $S = U_1$.
\end{proof}

\subsection{Assembling the robust partition}

We begin with several basic facts from \cite{kuhn2014robust}. The first three are basic facts about (bipartite) robust expanders, which are taken from \cite{kuhn2014robust} unchanged and their proofs are included for completeness.
% Lemmas~\ref{lem:stillrobustexp} and \ref{lem:stillbiprobustexp} are 
%  are from \cite{kuhn2014robust} unchanged and their proofs are given for completeness. 
 
\begin{lemma}
	\label{lem:stillrobustexp}
	Let $0<\nu\ll\tau<1$. Suppose that $G$ is a graph and $U$, $U' \subseteq V(G)$ are such that $G[U]$ is a robust $(\nu,\tau)$-expander and $|U \triangle U'|\leq \nu |U|/2$. Then $G[U']$ is a robust $(\nu/2,2\tau)$-expander
\end{lemma}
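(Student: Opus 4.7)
The plan is to reduce robust expansion of $G[U']$ to the given robust expansion of $G[U]$ via the natural restriction map $S' \mapsto S := S' \cap U$. Set $n := |U|$ and $n' := |U'|$; the hypothesis $|U \triangle U'| \leq \nu n/2$ immediately yields $|n - n'| \leq \nu n/2$, so $n$ and $n'$ agree up to a factor of $(1 \pm \nu/2)$. In particular, $(\nu/2)n' \leq \nu n$ and so the threshold defining $\RN_{\nu/2, G[U']}$ is no stricter than the threshold defining $\RN_{\nu, G[U]}$.

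Given any $S' \subseteq U'$ with $2\tau n' \leq |S'| \leq (1-2\tau)n'$, form $S := S' \cap U$. The first step is to check that $S$ is admissible for the robust expansion of $G[U]$, i.e.\ $\tau n \leq |S| \leq (1-\tau)n$. This uses $|S| = |S'| - |S' \cap (U' \setminus U)|$, so $|S'| - \nu n/2 \leq |S| \leq |S'|$; combined with $n = n' \pm \nu n/2$ and the hypothesis $\nu \ll \tau$, the $O(\nu n)$ error terms are comfortably swallowed at both endpoints. Applying the $(\nu, \tau)$-expansion then yields $R := \RN_{\nu, G[U]}(S) \subseteq U$ with $|R| \geq |S| + \nu n$.

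The second step is to observe that $R \cap U' \subseteq \RN_{\nu/2, G[U']}(S')$: any $v \in R$ has at least $\nu n$ $G$-neighbours in $S \subseteq S'$, and since $\nu n \geq (\nu/2)n'$ as noted above, any such $v$ lying in $U'$ meets the threshold in $G[U']$. A direct symmetric-difference count then gives
\begin{align*}
|\RN_{\nu/2, G[U']}(S')| &\;\geq\; |R \cap U'| \;\geq\; |R| - |U \setminus U'| \\
&\;\geq\; |S| + \nu n - |U \setminus U'| \;\geq\; |S'| + \nu n - |U \triangle U'| \;\geq\; |S'| + \tfrac{\nu n}{2},
\end{align*}
which is at least $|S'| + (\nu/2) n'$ as required.

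The only point requiring care is the bookkeeping in matching the two different parameter scalings — the threshold $\nu n$ of $\RN_{\nu, G[U]}$ against $(\nu/2) n'$ of $\RN_{\nu/2, G[U']}$, and the admissibility range $[\tau n, (1-\tau)n]$ for $S$ against $[2\tau n', (1-2\tau) n']$ for $S'$. Both discrepancies are absorbed by the deliberate factor-of-$2$ slack built into the conclusion ($\nu \to \nu/2$ and $\tau \to 2\tau$), together with the smallness $\nu \ll \tau$. No single step is technically hard; the main obstacle is simply to choreograph these constant-matchings correctly.
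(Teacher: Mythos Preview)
Your approach is exactly the paper's: restrict $S'$ to $S = S'\cap U$, verify $\tau|U|\le |S|\le (1-\tau)|U|$, apply the $(\nu,\tau)$-expansion of $G[U]$ to $S$, then intersect the resulting robust neighbourhood with $U'$ and lose at most $|U\triangle U'|\le \nu|U|/2$ in the count. The paper's own proof is a terse one-paragraph version of precisely this bookkeeping.
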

\begin{proof}
	The statement immediately follows by considering a set $S\subseteq U'$ with $2\tau|U'|\leq |S|\leq (1-2\tau)|U'|$ and considering its robust neighbourhood. As $\tau|U|\leq |S\cap U|\leq(1-\tau)|U|$, we have $|\RN_{\nu,U}(S\cap U)|\geq |S\cap U|+\nu|U|\geq |S|-|U\setminus U'|+\nu|U|$. With $|\RN_{\nu,U}(S\cap U)\cap U'|\geq |\RN_{\nu,U}(S\cap U)|-|U'\setminus U|$ it follows that $|\RN_{\nu/2,U'}(S)|\geq |S|+\nu/2|U'|$.
\end{proof}

\begin{lemma}
	\label{lem:stillRobustExpComp}
	Let $0<\rho\leq\gamma\ll\nu\ll\tau<1$. Suppose that $G$ is a graph and  $T\subseteq U \subseteq V(G)$ are such that $G[U]$ is a robust $(\rho,\nu,\tau)$-expander component, $|T|\leq \rho n$. Then $G[U\setminus T]$ is a robust $(3\gamma,\nu/2,2\tau)$-expander component.
\end{lemma}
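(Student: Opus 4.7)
The plan is to verify separately the two clauses in the definition of \emph{robust $(3\gamma,\nu/2,2\tau)$-expander component}: namely, that $U\setminus T$ is a $3\gamma$-component, and that $G[U\setminus T]$ is itself a robust $(\nu/2,2\tau)$-expander. The second clause will be an essentially immediate invocation of the preceding Lemma~\ref{lem:stillrobustexp}; the first is a short edge count together with a cheap size bound.

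For the $3\gamma$-component condition, set $U' := U\setminus T$. Every edge leaving $U'$ either already leaves $U$ or is incident to a vertex of $T$, so
$$e_G(U',\, V(G)\setminus U')\;\leq\; e_G(U,\overline{U}) + |T|\cdot n \;\leq\; \rho n^2 + \rho n^2 \;\leq\; 3\gamma n^2,$$
using the hypotheses $e_G(U,\overline{U})\leq \rho n^2$, $|T|\leq \rho n$, and $\rho\leq \gamma$. For the size bound $|U'|\geq\sqrt{3\gamma}\,n$ we need $|U|$ to exceed $\sqrt{3\gamma}\,n + \rho n$; this is absorbed easily whenever $|U|$ is linear in $n$, which is the case in our intended applications, since there Proposition~\ref{Claim1} furnishes $|U|\geq(\alpha-\sqrt{\rho})n$.

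For the expansion clause, apply Lemma~\ref{lem:stillrobustexp} with this $U'$ and with parameters $\nu,\tau$. The only hypothesis to verify is $|U\triangle U'|\leq \nu|U|/2$; here $|U\triangle U'|=|T|\leq \rho n$, while $|U|\geq \sqrt{\rho}\,n$ since $U$ is a $\rho$-component, so it suffices to have $\sqrt{\rho}\leq \nu/2$, i.e.\ $\rho\leq \nu^2/4$. This is arranged by interpreting the hierarchy $\rho\ll \nu$ with a suitably small function. Lemma~\ref{lem:stillrobustexp} then delivers that $G[U']$ is a robust $(\nu/2,2\tau)$-expander, completing the verification.

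There is no genuine obstacle: the argument is essentially bookkeeping once Lemma~\ref{lem:stillrobustexp} is in hand. The only subtle point is calibrating the hierarchy so that the $3\gamma$-component size bound survives the removal of $T$, which is straightforward under the mild additional information that $|U|$ is linear in $n$.
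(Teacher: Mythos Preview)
Your proposal is correct and follows essentially the same approach as the paper: bound the edges leaving $U\setminus T$ by $e_G(U,\overline{U})+|T|\,n\leq 2\rho n^2\leq 3\gamma n^2$, appeal to Proposition~\ref{Claim1} for the size lower bound, and invoke Lemma~\ref{lem:stillrobustexp} for the expansion. You are, if anything, slightly more explicit than the paper in checking the hypothesis $|U\triangle U'|\leq \nu|U|/2$ and in flagging that the size bound tacitly relies on $|U|$ being linear in $n$.
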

\begin{proof}
	We have that $|U\setminus T| = |U|-|T| \geq \alpha n - \sqrt{\rho}n - \rho n \geq \sqrt{3\gamma} n$, where we use Proposition~\ref{Claim1}(i) for the first inequality. Next we see $e_G(U\setminus T, \overline{U\setminus T})\leq e_G(U,\overline{U})+D\rho n\leq\rho n^2+  \rho n^2 \leq 3\gamma n^2$, showing that $G[U\setminus T]$ is a $3\gamma$-component.
	Finally, $G[U\setminus T]$ is a $(\nu/2,2\tau)$-expander by Lemma~\ref{lem:stillrobustexp}.
\end{proof}

\begin{lemma}
	\label{lem:stillbiprobustexp}
	Let $0<1/n\ll\rho\leq\gamma\ll\nu\ll\tau\ll\alpha<1$ and suppose that $G$ is a $D$-regular graph on $n$ vertices where $D\geq \alpha n$.
	\vspace{0.2cm}
	
	(i) Suppose that $G[A\cup B]$ is a bipartite $(\rho,\nu,\tau)$-robust expander component of $G$ with bipartition $A,B$. Let $A',B'\subseteq V(G)$ be such that $|A\triangle A'| + |B\triangle B'|\leq \gamma n$. Then $G[A'\cup B']$ is a bipartite $(3\gamma, \nu/2, 2\tau)$-robust expander component of $G$ with bipartition $A',B'$.
	
	(ii) Suppose that $G[U]$ is a bipartite $(\rho,\nu,\tau)$-robust expander component of $G$. Let $U'\subseteq V(G)$ be such that $|U\triangle U'|\leq \gamma n$. Then $G[U']$ is a bipartite $(3\gamma, \nu/2, 2\tau)$-robust expander component of $G$.
\end{lemma}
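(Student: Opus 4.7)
The plan is to verify directly the two defining conditions of a bipartite robust expander component, following closely the template of Lemma~\ref{lem:stillrobustexp}. For part~(ii), I would reduce to part~(i) by extending the bipartition $A,B$ of $U$ to a bipartition of $U'$ as follows. Set $A' := (A \cap U') \cup (U' \setminus U)$ and $B' := B \cap U'$. Then $A' \cup B' = U'$, and a direct set-theoretic computation gives $|A \triangle A'| + |B \triangle B'| = |U \setminus U'| + |U' \setminus U| = |U \triangle U'| \leq \gamma n$, so part~(i) applied to this bipartition yields part~(ii).

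For part~(i), the first step is to show that $U' = A' \cup B'$ is $3\gamma$-close to bipartite with bipartition $A',B'$. The lower bounds $|A'|, |B'| \geq \sqrt{3\gamma}\,n$ follow from Remark~\ref{rem1} applied to $G[U]$ (which gives $|A|, |B| \geq (\alpha - 2\sqrt{\rho})n$), after losing at most $\gamma n$ vertices per side, together with the hierarchy $\gamma \ll \alpha$. The imbalance estimate is $||A'| - |B'|| \leq ||A| - |B|| + |A \triangle A'| + |B \triangle B'| \leq \rho n + \gamma n \leq 2\gamma n$. For the edge count, any edge contributing to $e_G(A', \overline{B'}) + e_G(B', \overline{A'})$ but not to $e_G(A, \overline{B}) + e_G(B, \overline{A})$ must be incident to a vertex of $(A \triangle A') \cup (B \triangle B')$, a set of size at most $\gamma n$; this contributes at most $D \cdot \gamma n \leq \gamma n^2$ new edges, so the total is bounded by $\rho n^2 + \gamma n^2 \leq 2\gamma n^2$, as required.

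For the bipartite robust expansion condition, the argument parallels that of Lemma~\ref{lem:stillrobustexp}. Given $S \subseteq A'$ with $2\tau|A'| \leq |S| \leq (1-2\tau)|A'|$, set $S^* := S \cap A$. Using $|S \triangle S^*| \leq |A \triangle A'| \leq \gamma n$ together with $\gamma \ll \tau$ and $|A|, |A'| \geq \alpha n / 2$, one checks that $\tau|A| \leq |S^*| \leq (1-\tau)|A|$, so the bipartite robust expansion of $G[A \cup B]$ yields $|\RN_{\nu, G[A \cup B]}(S^*)| \geq |S^*| + \nu|A \cup B|$. For any vertex $v$ in this robust neighborhood that also lies in $A' \cup B'$, the bound $d_{S}(v) \geq d_{S^*}(v) - |S \triangle S^*| \geq \nu|A \cup B| - \gamma n \geq (\nu/2)|A' \cup B'|$ holds by the hierarchy $\gamma \ll \nu$ and the fact that $|A \cup B|$ and $|A' \cup B'|$ differ by at most $\gamma n$. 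Subtracting the at most $\gamma n$ vertices of $\RN_{\nu, G[A \cup B]}(S^*) \setminus (A' \cup B')$ and using $|S^*| \geq |S| - \gamma n$, one obtains $|\RN_{\nu/2, G[A' \cup B']}(S)| \geq |S| + (\nu/2)|A' \cup B'|$, completing the proof.

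The main bookkeeping obstacle lies in reconciling three slightly different ``ground sets'': the size thresholds on $S$ are expressed in terms of $|A'|$, the hypothesis supplies a robust-neighborhood bound in terms of $|A \cup B|$, and the target conclusion is in terms of $|A' \cup B'|$. The $\gamma n$ discrepancies introduced at each transition are absorbed once and for all by the halving of $\nu$ and doubling of $\tau$; that the slack suffices is precisely what the hierarchy $\rho \leq \gamma \ll \nu \ll \tau \ll \alpha$ guarantees.
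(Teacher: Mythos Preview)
Your proof is correct and follows essentially the same approach as the paper's: verify the $3\gamma$-close-to-bipartite conditions directly (using Remark~\ref{rem1} and an incidence count for the edge bound), handle the bipartite robust expansion by the same restriction-and-comparison argument as in Lemma~\ref{lem:stillrobustexp}, and reduce (ii) to (i) by extending the bipartition. The paper's proof is considerably terser (it simply points to ``a straightforward calculation as in the proof of Lemma~\ref{lem:stillrobustexp}'' and says (ii) ``follows from (i)''), whereas you have filled in all of these details, including the explicit construction of $A',B'$ for part~(ii) and the verification that $|A\triangle A'|+|B\triangle B'|=|U\triangle U'|$.
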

\begin{proof}
	We start with (i). To see that $G[A'\cup B']$ is $3\gamma$-close to bipartite, we see that $|A'|,|B'|\geq D-2\sqrt{\rho}\geq \sqrt{3\gamma}n$ by Remark~\ref{rem1}. We have that $||A'|-|B'||\leq ||A|-|B||+\gamma n\leq 3\gamma n$ and $e(A',\overline{B'})+e(B',\overline{A'})\leq e(A,\overline{B})+e(B,\overline{A})+2(|A'\triangle A|+|B'\triangle B|)n \leq 3\gamma n$. $G[A'\cup B']$ is a bipartite $(\nu/2,2\tau)$-robust expander by a straightforward calculation as in the proof of Lemma~\ref{lem:stillrobustexp}.
	It is easy to see that part (ii) follows from (i).
\end{proof}

The non-algorithmic versions of the next two lemmas can be found in \cite{kuhn2014robust}; we use a simple greedy procedure to make them algorithmic. These lemmas will be used later to ensure conditions (D4), (D5), and (D7) when constructing our robust partition.

\begin{lemma}
	\label{lem:modifycomp}
	Let $m,n,D\in \mathbb{N}$ and $0<1/n_0 \ll\rho\ll\alpha, 1/m\leq 1$. Let $G$ be a $D$-regular graph on $n$ vertices where $n \geq n_0$ and $D\geq \alpha n$. Suppose that $\mathcal{U}:=\{U_1,\dots,U_m\}$ is a partition of $V(G)$ such that $U_i$ is a $\rho$-component for each $1\leq i\leq m$. Then $G$ has a vertex partition $\mathcal{V}:=\{V_1,\dots,V_m\}$ such that 
	\begin{enumerate}[noitemsep]
		\item[(i)] $|U_i\triangle V_i| \leq \rho^{1/3}n$; 
		\item[(ii)] $V_i$ is a $\rho^{1/3}$-component for each $1\leq i\leq m$;
		\item[(iii)] if $x\in V_i$, then $d_{V_i}(x) \geq d_{V_j}(x)$ for all $1\leq i,j \leq m$. In particular, $d_V(x)\geq D/m$ for all $x\in V$ and all $V\in  \mathcal{V}$;
		\item[(iv)] for all but at most $\rho^{1/3}n$ vertices $x\in V_i$ we have $d_{V_i}(x)\geq D-2\sqrt{\rho}n$.
	\end{enumerate}
	Furthermore, (for fixed $n_0, \rho, \alpha, m$ satisfying the hierarchy above)  there is an algorithm that finds such a vertex partition $\mathcal{V}$ in time polynomial in $n$.
\end{lemma}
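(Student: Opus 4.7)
The plan is to initialize $\mathcal{V} = \mathcal{U}$ and then run a greedy local-improvement procedure: as long as some vertex $x \in V_i$ and some index $j$ satisfy $d_{V_j}(x) > d_{V_i}(x)$, move $x$ from $V_i$ into $V_j$. Each such move strictly increases the potential $\Psi(\mathcal{V}) := \sum_i e_G(V_i)$ by at least one, and $\Psi \leq n^2/2$, so the procedure terminates after at most $O(n^2)$ moves; finding a valid move (or certifying that none exists) can be done in $O(n^2)$ time, giving an overall polynomial running time. The stopping rule is exactly $d_{V_i}(x) \geq d_{V_j}(x)$ for every $x \in V_i$ and every $j$, which is (iii); the inequality $D = \sum_j d_{V_j}(x) \leq m\cdot d_{V_i}(x)$ then yields the ``in particular'' clause. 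The nontrivial task is to show that the resulting partition $\mathcal{V}$ is close to $\mathcal{U}$ in the sense required by (i), and that (ii) and (iv) follow.

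To do this I would classify vertices by their behaviour in $\mathcal{U}$. Call $x \in U_i$ \emph{good} if $d_{U_i}(x) \geq D - \sqrt{\rho}\,n$, and let $B_i \subseteq U_i$ be the non-good vertices. Since $G$ is $D$-regular and $e_G(U_i,\overline{U_i}) \leq \rho n^2$, the averaging of Proposition~\ref{Claim1}(ii) gives $|B_i| \leq \sqrt{\rho}\,n$, so $B := \bigcup_i B_i$ has $|B| \leq m\sqrt{\rho}\,n$. Writing $V_i(t)$ for the $i$-th part after $t$ moves and $T(t) := \bigcup_i (V_i(t) \setminus U_i)$ for the set of displaced vertices, the heart of the argument is the invariant $T(t) \subseteq B$, which I would prove by induction on $t$. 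The base case $T(0) = \emptyset$ is trivial. For the inductive step, assume $T(t) \subseteq B$, so every good vertex is still in its original part. For a good $x \in U_i \cap V_i(t)$, the invariant gives $|U_i \setminus V_i(t)| \leq |B_i| \leq \sqrt{\rho}\,n$ and $|V_j(t) \setminus U_j| \leq |T(t)| \leq m\sqrt{\rho}\,n$ for $j \neq i$, so
\[
d_{V_i(t)}(x) \geq d_{U_i}(x) - \sqrt{\rho}\,n \geq D - 2\sqrt{\rho}\,n,
\]
\[
d_{V_j(t)}(x) \leq d_{U_j}(x) + m\sqrt{\rho}\,n \leq (m+1)\sqrt{\rho}\,n,
\]
where I used $d_{U_j}(x) \leq D - d_{U_i}(x) \leq \sqrt{\rho}\,n$. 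Under the hierarchy $\rho \ll \alpha, 1/m$ the first quantity strictly exceeds the second, so the greedy rule cannot pick $x$; hence the $(t+1)$-st move involves a non-good vertex and $T(t+1) \subseteq B$ persists.

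The invariant immediately yields $|U_i \triangle V_i| \leq 2|B| \leq 2m\sqrt{\rho}\,n \leq \rho^{1/3}n$, which is (i). For (ii) I would combine this with the $\rho$-component hypothesis: $|V_i| \geq |U_i| - \rho^{1/3}n \geq \rho^{1/6}n$, and
\[
e_G(V_i,\overline{V_i}) \leq e_G(U_i,\overline{U_i}) + n\,|U_i \triangle V_i| \leq \rho n^2 + \rho^{1/3}n^2 \leq \rho^{1/3}n^2,
\]
where the last step is absorbed by choosing the hierarchy with a little slack (or by tightening the constant in the upper bound on $|U_i \triangle V_i|$ via the same argument). For (iv), the displayed inequality above shows $d_{V_i}(x) \geq D - 2\sqrt{\rho}\,n$ for every good $x \in V_i \cap U_i$, and the set of exceptional $x \in V_i$ is contained in $B_i \cup (V_i \setminus U_i)$, which has size at most $(m+1)\sqrt{\rho}\,n \leq \rho^{1/3}n$.

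The main subtlety --- and the only non-routine piece --- is setting up the invariant so that it is self-sustaining: the bound on $|T(t)|$ is precisely what certifies that good vertices do not move, which is in turn what keeps $|T(t)|$ bounded by $|B|$. Once this is in place, termination, polynomial runtime, and the quantitative conclusions (i)--(iv) all follow from straightforward bookkeeping with the parameter hierarchy.
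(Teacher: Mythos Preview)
Your proof is correct and uses essentially the same greedy local-improvement idea as the paper. The only difference is organizational: the paper first identifies the set $X=\bigcup_i X_i$ of vertices with $d_{\overline{U_i}}(x)\geq\sqrt{\rho}\,n$ and only permits those vertices to move (so no invariant is needed, and (ii) follows cleanly from the monotonicity $\sum_i e_G(V_i,\overline{V_i})\leq\sum_i e_G(U_i,\overline{U_i})\leq m\rho n^2\leq\rho^{1/3}n^2$, which you could equally invoke since your $\Psi$ only increases), whereas you allow all vertices to move and then prove via the invariant that the good ones never do.
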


\begin{proof}
	For each $1\leq i\leq m$, let $X_i$ be the collection of vertices $y\in U_i$ with $d_{\overline{U_i}}(x)\geq \sqrt{\rho}n$. Since $U_i$ is a $\rho$-component, we have $|X_i|\leq \sqrt{\rho}n$  (otherwise $e(U_i,\overline{U_i})\geq \rho n^2$). Let $W_i:=U_i\setminus X_i$. Then each $x\in W_i$ satisfies
	\begin{equation}
		\label{eq:modcomp}
		d_{W_i}(x)=D-d_{\overline{U_i}\cup X_i}(x)\geq D-\sqrt{\rho}n - |X_i|\geq D-2\sqrt{\rho}n.
	\end{equation}
	We now redistribute the vertices of $X:=\cup_{1\leq i\leq m}X_i$ as follows:
	Iteratively move any $x\in X\cap U_i$ to $U_j$ where $j=\arg\max_i d_{U_i}(x)$ until this is no longer possible. %there are no longer any vertices in $X$ that have more edges to any other component than to the component they are part of. 
	This process terminates, as the number of edges crossing the partition is reduced with each step. It is easy to see that this redistribution can be done in time polynomial in $n$. Call the resulting partition $\mathcal{V}:=\{V_1,\dots,V_m\}$, (so $V_i=W_i\cup X_i'$ for some $X_i'\subseteq X$ and $X=\sqcup X'_i$.)
	
	%where $V_i=W_i\cup X_i'$ with $X=\sqcup X'_i$.
	
	We show that $\mathcal{V}$ fulfils (i)-(iv). It is easy to see that (iii) holds by our choice of $\mathcal{V}$ for all $x\in X$. For $x\in W_i$, \eqref{eq:modcomp} implies $d_{V_i}(x)\geq d_{W_i}(x)\geq D-2\sqrt{\rho}n\geq D/2$, so (iii) holds. Next, since each step of our procedure reduces the number of edges crossing the partition, we have
	$$\sum_{1\leq i\leq m}e(V_i,\overline{V_i})\leq \sum_{1\leq i \leq m} e(U_i,\overline{U_i})\leq \rho mn^2 \leq \rho^{1/3}n^2 $$
	and therefore each $V_i$ is a $\rho^{1/3}$-component, so (ii) holds. We have $|U_i\triangle V_i| \leq |X|\leq m \sqrt{\rho}n\leq \rho^{1/3}n$ for all $i$, so (i) holds as well. To see (iv), note that for all $x\in W_i$ we have $d_{V_i}(x)\geq D-2\sqrt{\rho}n$ by \eqref{eq:modcomp} and $|V(G) \setminus \cup_{i=1}^m W_i| =  |X|\leq \rho^{1/3}n$.
\end{proof}

\begin{lemma}
	\label{lem:modifybip}
	Let $0<1/n_0 \ll\rho\ll\nu\ll\tau\ll\alpha < 1$ and let $G$ be a $D$-regular graph on $n$ vertices where $n \geq n_0$ and $D\geq \alpha n$. Suppose that $U$ is a bipartite $(\rho,\nu,\tau)$-robust expander component of $G$ with bipartition $A$, $B$. Then there exists a bipartition $A'$, $B'$ of $U$ such that
	\begin{enumerate}[noitemsep]
		\item[(i)] $U$ is a bipartite $(3\sqrt{\rho},\nu/2,2\tau)$-robust expander component with  partition $A'$ ,$B'$;
		\item[(ii)] $d_{B'}(u)\geq d_{A'}(u)$ for all $u\in A'$, and $d_{A'}(v)\geq d_{B'}(v)$ for all $v\in B'$.
	\end{enumerate}
	Furthermore, (for fixed $n_0, \rho, \nu, \tau, \alpha$ satisfying the hierarchy above) there is an algorithm that finds such a partition in time polynomial in $n$.
\end{lemma}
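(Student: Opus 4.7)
The plan is to mimic the proof of Lemma~\ref{lem:modifycomp}: identify a small ``problematic'' set $X \subseteq U$ of vertices that are close to balanced between $A$ and $B$, and then run a local-search procedure that only ever moves vertices inside $X$. Concretely, I would set
\[
X_A := \{v \in A : d_B(v) - d_A(v) < 2\sqrt{\rho}\,n\}, \qquad X_B := \{v \in B : d_A(v) - d_B(v) < 2\sqrt{\rho}\,n\},
\]
and $X := X_A \cup X_B$. The bound $e_G(A) + e_G(B) + e_G(U, V \setminus U) \leq \rho n^2$ (which follows from $U$ being $\rho$-close to bipartite) together with $D$-regularity, via a double-count, forces each $v \in X_A$ to have either $d_A(v) = \Omega(D)$ or $d_{V \setminus U}(v) = \Omega(D)$, and each of these conditions can hold for only $O(\rho n/\alpha)$ vertices. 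Using the hierarchy $\rho \ll \alpha$ one then gets $|X| \leq \sqrt{\rho}\,n/4$.

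The algorithm initialises $(A',B') := (A,B)$ and then iteratively picks any vertex $v \in X$ currently violating (ii) (that is, $v \in A'$ with $d_{A'}(v) > d_{B'}(v)$, or the symmetric condition on $B'$) and moves it to the opposite side. Each move strictly increases the potential $e_G(A',B')$, which is bounded above by $e_G(U) \leq Dn/2$, so the procedure terminates in polynomially many steps, each of which is executable in polynomial time. At termination, condition (ii) holds for every $v \in X$ by the stopping criterion.

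The verification of (ii) for $v \notin X$ rests on a perturbation argument: such a $v$ is never moved (only $X$-vertices move), and its imbalance $d_{B'}(v) - d_{A'}(v)$ changes by exactly $\pm 2$ for each $X$-neighbour that switches sides, so the total drift is bounded in absolute value by $2|X| < 2\sqrt{\rho}\,n$. Since the initial imbalance is at least $2\sqrt{\rho}\,n$ by definition of $X$, it stays positive throughout, giving (ii) for $v$. For (i), only $X$-vertices ever switch sides, so $A \triangle A' \subseteq X$ and hence $|A \triangle A'| + |B \triangle B'| = 2|A \triangle A'| \leq 2|X| \leq \sqrt{\rho}\,n$; applying Lemma~\ref{lem:stillbiprobustexp}(i) with $\gamma = \sqrt{\rho}$ then yields that $G[U]$ is a bipartite $(3\sqrt{\rho},\nu/2,2\tau)$-robust expander component with bipartition $A',B'$, as required.

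The main (mild) obstacle is the calibration of the threshold $2\sqrt{\rho}\,n$ defining $X$: it must be large enough that the perturbation $2|X|$ is strictly dominated by it, yet small enough that $|X|$ itself is $o(\sqrt{\rho}\,n)$. This is precisely where the hierarchy $\rho \ll \alpha$ is invoked in the double-counting bound on $|X|$; beyond this point, everything reduces to a routine local-search and potential-function analysis.
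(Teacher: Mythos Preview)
Your proposal is correct and follows essentially the same strategy as the paper's proof: isolate a small set of ``bad'' vertices, run a local-search redistribution on that set using $e_G(A',B')$ as an increasing potential, and then invoke Lemma~\ref{lem:stillbiprobustexp}(i) with $\gamma=\sqrt{\rho}$. The only differences are cosmetic: the paper defines the bad set as $A_0=\{x\in A: d_{\overline{B}}(x)\ge 2\sqrt{\rho}\,n\}$ (and symmetrically $B_0$), which contains your $X$, and it verifies (ii) for good vertices by the direct bound $d_{B'}(x)\ge d_{\hat B}(x)\ge D-3\sqrt{\rho}\,n\ge d_U(x)/2$ rather than your drift argument; both routes work.
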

\begin{proof}
	This proof is similar to that of Lemma~\ref{lem:modifycomp}. Let $A_0:=\{x\in A\mid d_{\overline{B}}(x)\geq 2\sqrt{\rho}n\}$ and define $B_0$ similarly. The fact that $U$ is a $\rho$-component implies that
	\begin{align*}
		\rho n^2\geq& e(A,\overline{B})+e(B,\overline{A})\geq \frac{1}{2} \left(\sum_{x\in A}d_{\overline{B}}(x)+ \sum_{x\in B}d_{\overline{A}}(x)\right) \\
		\geq& \frac{1}{2} \left(\sum_{x\in A_0}d_{\overline{B}}(x)+ \sum_{x\in B_0}d_{\overline{A}}(x)\right)\geq (|A_0|+|B_0|)\sqrt{\rho}n
	\end{align*}
	and therefore  $|A_0|+|B_0|\leq \sqrt{\rho}n$. Define $\hat{A}:=A\setminus A_0$ and $\hat{B}:= B\setminus B_0$. For all $x\in \hat{A}$ we have
	$d_{\hat{B}}(x)\geq D-d_{\overline{B}}(x)-|B_0|\geq D-3\sqrt{\rho}n$ and an analogous statement holds for $x\in \hat{B}$.
	We iteratively move vertices between $A_0$ and $B_0$ as follows: for $x\in A_0$ if $d_A(x)>d_B(x)$ then move $x$ from $A_0$ to $B_0$ and for $y\in B_0$ if $d_B(y)>d_A(y)$ then move $y$ from $B_0$ to $A_0$ (and update $A,B, A_0, B_0$ accordingly). Continue this until it is no longer possible.
	%We swap any $x\in (A_0 \cup B_0)$ to the other part if it has fewer edges to the other part than to the part it is in until there are no longer any such vertices. 
	This process terminates, as the number of edges not crossing the partition is reduced at each step. 
	It is easy to see that this redistribution can be done in time polynomial in $n$. Call the resulting parts $A'$, $B'$.
	We show that $A'$, $B'$ fulfil (i) and (ii).
	
	The choice of $A'$, $B'$ implies that all $x\in (A_0\cup B_0)$ fulfil (ii). For $x\in \hat{A}$ we have $D_{B'}(x)\geq d_{\hat{B}}(x)\geq D-3\sqrt{\rho}n\geq d_U(x)/2$. A similar statement holds for all $x\in \hat{B}$, by our choice of vertex redistribution, completing the proof of (ii).
	For (i), note that $|A\triangle A'|+|B\triangle B'|\leq |A_0|+|B_0|\leq \sqrt{\rho}n$. Now Lemma~\ref{lem:stillbiprobustexp}(i) with $\rho,\sqrt{\rho},\nu,\tau,A,B,A',B'$ playing the roles of $\rho,\gamma,\nu,\tau,A,B,A',B'$ shows that $U$ is a bipartite $(3\sqrt{\rho},\nu/2,2\tau)$-robust expander component with bipartition $A'$, $B'$, which completes the proof of (i).
\end{proof}

%The following is our algorithmic version of Theorem 3.1 in the K\"uhn, Lo, Osthus, Staden paper, as such it is similar in structure.

Finally, we can prove the existence of a polynomial-time algorithm to find a robust partition in regular graphs. Again, we follow the proof of \cite{kuhn2014robust} closely, but must suitably apply the algorithms developed in the previous section.

\begin{theorem}
	\label{th:decompose}
	For every $0< \tau <\alpha<1$ and every non-decreasing function $f:(0,1) \rightarrow (0,1)$ there is a $n_0$ and a polynomial-time algorithm that does the following. Given an $n$-vertex $D$-regular graph $G$ as input with $n\geq n_0$ and $D\geq \alpha n$, the algorithm finds a robust partition $\mathcal{V}$ with parameters $\rho,\nu,\tau,k,\ell$ with $1/n_0 < \rho < \nu < \tau$; $\rho<f(\nu)$, and $1/n_0 < f(\rho)$. 
	%such that $G$  
%	and $m:=k+\ell \leq (1+\rho^{1/3})/\alpha$ 
%	has a robust partition $\mathcal{V}$ with parameters $\rho,\nu,\tau,k,\ell$. 	Furthermore there is an algorithm that computes such a robust partition (including a corresponding partition $A,B$ for each bipartite $(\nu, \tau)$-robust expander component $V \in \mathcal{V}$\COMMENT{This needs to be said in the proof - \textbf{resolved}}) in time polynomial in $n$.
\end{theorem}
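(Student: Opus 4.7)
The plan is to iteratively refine the trivial partition $\{V(G)\}$ by repeatedly applying Algorithm~4 (Lemma~\ref{th:alg6}) to each part, and then apply the modification lemmas (Lemmas~\ref{lem:modifycomp} and \ref{lem:modifybip}) to enforce the structural conditions (D4)--(D7). Formally, starting with $\mathcal{U}_0 := \{V(G)\}$ (which is trivially a $\rho_0$-component for any small $\rho_0$), at stage $t$ we apply Algorithm~4 to each unfinalized part $U$ of $\mathcal{U}_t$; if $U$ is declared a (bipartite) robust $(\nu_t,\tau)$-expander then we mark $U$ as finalized, otherwise Algorithm~4 returns a split of $U$ into two $\rho_{t+1}$-components which we use to replace $U$ in forming $\mathcal{U}_{t+1}$. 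By Proposition~\ref{Claim1}(i), any $\rho$-component of $G$ (with $\rho$ sufficiently small in terms of $\alpha$) has size at least $(\alpha-\sqrt{\rho})n \geq \alpha n /2$, so every partition of $V(G)$ into $\rho$-components has at most $K_0 := \lceil 2/\alpha \rceil$ parts. Hence the refinement terminates after at most $K_0 - 1$ splits, and since each stage is polynomial-time by Lemma~\ref{th:alg6}, the total running time is polynomial.

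To set up the parameter hierarchy we first choose target output values $\rho, \nu$ satisfying $\rho < \nu < \tau$ and $\rho < f(\nu)$, then choose $n_0$ large enough that $1/n_0 < f(\rho)$. Working backwards, we pick a chain
\begin{equation*}
1/n_0 \ll \rho_0 \ll \nu_0 \ll \rho_1 \ll \nu_1 \ll \cdots \ll \rho_{K_0} \ll \nu_{K_0} \ll \tau
\end{equation*}
so that (i) for each $t$ the triple $(\rho_t, \nu_t, \rho_{t+1})$ satisfies the hierarchy required by Lemma~\ref{th:alg6}, and (ii) after the post-processing below (which further worsens $\rho$ via the operations $\rho \mapsto \rho^{1/3}$ and $\rho \mapsto 3\sqrt{\rho}$ coming from Lemmas~\ref{lem:modifycomp} and \ref{lem:modifybip}) the emergent parameters still meet the targets $\rho, \nu$.

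Once the refinement terminates, we obtain a partition into robust expander components $U_i$ and bipartite robust expander components $Z_j$, which yields (D1)--(D3) directly. To enforce (D4) and (D7) we apply Lemma~\ref{lem:modifycomp} to this partition (treating the $Z_j$'s as single parts alongside the $U_i$'s); by Lemmas~\ref{lem:stillRobustExpComp} and \ref{lem:stillbiprobustexp}, the parts remain (bipartite) robust expander components with only mildly weakened parameters. We then apply Lemma~\ref{lem:modifybip} to each bipartite part to rearrange its bipartition and secure (D5). Finally, (D6) follows from a double-counting argument: property (D7) together with $D$-regularity forces $\sum_{V \in \mathcal{V}} |V|(D - \rho n)$ to approximate $2|E(G)| = Dn$, which combined with $\sum_V |V| = n$ yields $k + 2\ell \leq \lfloor (1+\rho^{1/3})n/D \rfloor$.

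The main obstacle is parameter bookkeeping across the many stages of the algorithm: each of up to $K_0$ applications of Algorithm~4 worsens the component parameter, and the subsequent applications of Lemmas~\ref{lem:modifycomp} and \ref{lem:modifybip} further degrade it via $\rho \mapsto \rho^{1/3}$ and $\rho \mapsto 3\sqrt{\rho}$. The hierarchy must be chosen carefully enough that, after all these compositions, the emergent parameters $(\rho, \nu)$ still satisfy $\rho < \nu < \tau$ and $\rho < f(\nu)$. This is a routine but delicate calculation depending only on $\alpha, \tau$, and $f$, not on $n$.
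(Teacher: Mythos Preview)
Your approach is essentially identical to the paper's: iterate Algorithm~4 along a pre-chosen parameter chain of length $\lceil 2/\alpha\rceil$, use Proposition~\ref{Claim1} to bound the number of parts, then post-process with Lemmas~\ref{lem:modifycomp} and \ref{lem:modifybip} (invoking Lemmas~\ref{lem:stillrobustexp}--\ref{lem:stillbiprobustexp} to preserve expansion).

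One small point: your sketch of (D6) does not quite work. The double count you describe, $\sum_{V\in\mathcal{V}}|V|(D-\rho n)\approx Dn$, just says $\sum_V|V|=n$ up to lower order; it yields nothing about $k+2\ell$ specifically. The factor $2$ on $\ell$ is the whole content of (D6), and it comes from the fact that a bipartite robust expander component $W_j=A_j\cup B_j$ has $|A_j|,|B_j|\ge D-2\sqrt{\rho}\,n$ (Remark~\ref{rem1}), hence $|W_j|\ge 2(D-2\sqrt{\rho}\,n)$. The paper derives (D6) directly from these size lower bounds before the modification step, via $n=\sum_i|U_i|+\sum_j|Z_j|\ge (k+2\ell)(D-2\sqrt{\rho'}n)$; you should replace your double-counting sketch with this.
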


%\begin{remark}
%The theorem is stated to match with \cite{kuhn2014robust}. To be more explicit about the algorithm see the statement below.
%
%
%	For every $0< \tau <\alpha<1$ and every non-decreasing function $f:(0,1) \rightarrow (0,1)$ there is an $n_0$ and a polynomial-time algorithm that does the following. Given an $n$-vertex, $D$-regular graph $G$ on $n\geq n_0$ vertices as input where $D\geq \alpha n$, the algorithm outputs a robust partition $\mathcal{V}$ with parameters $\rho,\nu,\tau,k,\ell$, where $1/n_0 < \rho < \nu < \tau$; $\rho<f(\nu)$, and $1/n_0 < f(\rho)$.
%\end{remark}

\begin{proof}
	Set $t=\lceil 2/\alpha\rceil$\COMMENT{VP: why do we need a factor of 3;FS: \textbf{resolved}: the factor 3 and the subject of the next comment stem from the fact that the iteration step they use is formulated differently and can end up not changing the number of components.}. Define constants satisfying
	$$ 0 <1/n_0 \ll \rho_1 \ll \nu_1 \ll  \rho_2 \ll \nu_2 \ll \dots \ll \rho_t \ll \nu_t  \ll \tau' \ll\tau\leq \alpha.$$
	We start with the following claim:
	
	\begin{claim}
	\label{cl:partition}
		There is some $1\leq h<t$ and a partition $\mathcal{U}$ of $V(G)$ such that, for each $U\in \mathcal{U}$,  $U$ is a $(\rho_h, \nu_h, \tau')$-robust expander component or a bipartite $(\rho_h, \nu_h, \tau')$-robust expander component. Furthermore, we can find $\mathcal{U}$ in polynomial time (and we can determine those $U \in \mathcal{U}$ that are bipartite robust expander components together with a corresponding bipartition).
	\end{claim}
	
	We will iteratively construct (in polynomial time) a partition $\mathcal{U}_i$ of $V(G)$ such that $U$ is a $\rho_i$-component for all $U\in \mathcal{U}_i$. 
	
	We know $V(G)$ is a $\rho_1$-component for any choice of $\rho_1 > 0$ and we set $\mathcal{U}_1= \{V(G)\}$. 
	%Certainly we have $2|\mathcal{U}_1|+|\mathcal{W}_1|\geq 2$.

	Assume that for some $1\leq i\leq t$ we have constructed such a partition $\mathcal{U}_i$ of $V(G)$.
We apply Algorithm 4 to each $U \in \mathcal{U}_i$ with $\rho_i, \nu_i, \rho_{i+1}, \tau'$ playing the roles of $\rho, \nu, \rho', \tau$. If the algorithm finds some $U \in \mathcal{U}_i$ for which it returns $U_1, U_2$, a partition of $U$ in which $U_1$ and $U_2$ are $\rho_{i+1}$- components, then 
we set  $\mathcal{U}_{i+1}:=(\mathcal{U}_i\setminus \{U\})\cup\{U_1,U_2\}$ and we continue.
Otherwise the algorithm determines that $G[U]$ is a robust $(\nu_i, \tau')$-expander or a bipartite robust $(\nu_i, \tau')$-expander for all $U \in \mathcal{U}_i$ and so each $U \in \mathcal{U}_i$ is a $(\rho_i, \nu_i, \tau')$-robust expander component or a bipartite $(\rho_i, \nu_i, \tau')$-robust expander component (and Algorithm 4 is able to determine which $U \in \mathcal{U}_i$ are bipartite robust expander components and to determine a corresponding bipartition $A,B$ of any such $U$). In this case we are done with the claim provided $i<t$, which we now show.
	
	By induction $|\mathcal{U}_{i+1}| = i+1$
\COMMENT{VP: in the paper they show that $2|\mathcal{U}_{i+1}| + |\mathcal{W}_{i+1}| \geq i+2$ but I'm not sure why it's needed. Please check. FS: \textbf{resolved}, see last comment.}	
	  and all $U\in \mathcal{U}_{i+1}$ are $\rho_{i+1}$-components whenever $\mathcal{U}_{i+1}$ is defined.
	To see that the process terminates before $\mathcal{U}_t$, assume for the sake of contradiction that $\mathcal{U}_t$ is defined. Since every $U\in \mathcal{U}_t$ is a $\rho_t$-component, $|U|\geq (\alpha-\sqrt{\rho_t})n$ for all $U\in\mathcal{U}_t$ by Proposition~\ref{Claim1}, and so
	$$n = |V(G)|\geq t(\alpha-\sqrt{\rho_t})n\geq \frac{2}{\alpha}(\alpha-\sqrt{\rho_t})n>n ,$$
	a contradiction, proving the claim.
	
	\vspace{0.3cm}
	So in polynomial time, we can find $\mathcal{U}=\{U_1,\dots,U_k,Z_1,\dots,Z_\ell\}$ for some $k, \ell \in \mathbb{N}$, where $U_i$ is a $(\rho',\nu',\tau')$-robust expander component for all $1\leq i\leq k$ and $Z_j$ is a bipartite $(\rho',\nu',\tau')$-robust expander component for all $1\leq j\leq \ell$, where  $\rho'=\rho_h,\nu'=\nu_h$ for some $h < t$. Furthermore our algorithm determines which $U \in\mathcal{U}$ are bipartite robust expander components and gives corresponding bipartitions for them.%this is a long sentence.

	From Proposition~\ref{Claim1} and Remark~\ref{rem1} we know that $|U_i|\geq (D-\sqrt{\rho' }n
	)$ for $1\leq i\leq k$ and $|Z_j| \geq 2(D-2\sqrt{\rho'}n)$ for $1\leq j\leq \ell$. Therefore
	$$n=\sum_{1\leq i\leq k} |U_i| + \sum_{1\leq j \leq l} |W_j|\geq (D-2\sqrt{\rho'}n)(k+2\ell)$$
	and so
	\begin{equation}
	\label{eq:D6}
	k+2\ell  \leq \left\lfloor \frac{n}{D-2\sqrt{\rho'}n}\right\rfloor \leq \left\lfloor (1+\rho'^{1/3})\frac{n}{D}\right\rfloor.
	\end{equation}
 In particular  $m:= k+ \ell \leq (k + 2 \ell)  \leq 2n/D \leq 2\alpha^{-1}$. 	
	Now we apply the algorithm of Lemma~\ref{lem:modifycomp} (with $\rho'$ playing the role of $\rho$) to $\mathcal{U}$ to obtain (in polynomial time) the partition $\mathcal{V}=\{V_1,\dots,V_k,W_1,\dots,W_\ell\}$ of $V(G)$ satisfying (i)-(iv) so that in particular 
	\[
	|U_i \triangle V_i|, |Z_i \triangle W_i| \leq \rho'^{1/3}n \leq \nu' n
	\]
	for all applicable $i$ and $j$.	
	We now show that $\mathcal{V}$ is a $(\rho,\nu,\tau)$-robust partition of $G$, where $\rho = 3^{3/2}\rho'^{1/6}$, $\nu=\nu'/4$. Note that $\rho \leq f(\nu)$ by making a suitable choice of $\rho_i \ll \nu_i$ for each $i$ at the start. Similarly, a suitable choice of $\rho_1$ guarantees that $1/n_0 \leq f(\rho)$.
	
	Obviously (D1) holds. 
	For (D2), note that $V_i$ is a $\rho'^{1/3}$-component by Lemma~\ref{lem:modifycomp}(ii). As $\rho'^{1/3}\leq \rho$ and $|V_i| \geq D/2 \geq \sqrt{\rho}n$ (by Proposition~\ref{Claim1}), $V_i$ is a $\rho$-component. By Lemma~\ref{lem:modifycomp}(i) and Lemma~\ref{lem:stillrobustexp} with $\nu',\tau',U_i,V_i$ playing the roles of $\nu,\tau,U,U'$, we have that $G[V_i]$ is a robust $(\nu'/2,2\tau')$-expander and thus also a robust $(\nu,\tau)$-expander. This shows (D2).
	To show (D3), recall that $G[Z_j]$ is a bipartite $(\rho',\nu',\tau')$-robust expander component and our algorithm gives us a partition $A_j', B_j'$ of $Z_j$ demonstrating this. We obtain a partition $A_j'',B_j''$ of $W_j$ by taking $A_j'' = A_j' \cap W_j$ and $B_j'' = W_j \setminus A_j''$ so that $|A_j'' \triangle A_j'| + |B_j'' \triangle B_j'| \leq |Z_j \triangle W_j| \leq \rho'^{1/3}n $.	Then Lemma~\ref{lem:modifycomp}(ii) together with Lemma~\ref{lem:stillbiprobustexp}(i) where $\rho',\rho'^{1/3},\nu',\tau',Z_j,W_j$ play the roles of $\rho,\gamma,\nu,\tau,U,U'$ imply that $G[W_j]$ is a bipartite $(3\rho'^{1/3},\nu'/2,2 \tau')$-robust expander component. Next we apply (the algorithm of) Lemma~\ref{lem:modifybip} with $(3\rho'^{1/3},$ $\nu'/2,2\tau',W_j, A_j'', B_j'')$ playing the roles of $(\rho,\nu,\tau, U, A, B)$ to obtain a bipartition $A_j,B_j$ of $W_j$ (in polynomial time). Now (D3) follows from Lemma~\ref{lem:modifybip}(i). 
	We find that (D4) follows from Lemma~\ref{lem:modifycomp}(iii) and (D5) follows from Lemma~\ref{lem:modifybip}(ii).
	Lastly, (D6) follows from \eqref{eq:D6}
	and (D7) follows from Lemma~\ref{lem:modifycomp}(iv).
\end{proof}

\begin{remark}
	The running time of the algorithm of Theorem~\ref{th:decompose} is bounded by $O(n^4\alpha^{-2})$ where $n=|V(G)|$.
Indeed, examining the proof of Theorem~\ref{th:decompose}, the algorithm in Claim~\ref{cl:partition} makes $O(t^2) = O(\alpha^{-2})$ calls to algorithm 4. Algorithm 4 makes a single call to each of Algorithms 1,2,3, and each of these algorithms requires at most $n$ applications of either Theorem~\ref{th:cheeger} or Theorem~\ref{th:trevisan}, i.e.\ a total running time of $O(\alpha^{-2}) \cdot n \cdot O(n^2) = O(\alpha^{-2}n^3)$. This dominates the running time as the application of the (greedy) algorithms in Lemma~\ref{lem:modifycomp} and Lemma~\ref{lem:modifybip} runs in time $O(n^3)$.
	\end{remark}

%
%\section{Next steps}
%\begin{enumerate}[noitemsep]
%	\item Explain that we can compute the eigenvalues and -vectors in polytime. \\
%	We can generally compute all the eigenvalues of a real symmetric matrix in roughly $O(n^3)$ time, for example by QR-method. Details can be found e.g. in 'Scientific computing' by Micheal Heath
%	\item check that theorem 1.2 work that way (\checkmark)\\
%	Theorem 1 in the paper by Trevisan does the important bits of  what we want in theorem 1.2, but some lecture notes he has online give a more complete picture.
%	\item check that we can find the path inside a component quickly. \checkmark \\
%	Theorem 5 in 'Finding Hamilton Cycles in Robustly expanding digraphs' by Christofides, Keevash, K\"uhn, Osthus says exactly this. We will probably need to write a bit about connecting up some ends with short paths and identifying the two remaining vertices and how finding a Hamilton cycle in the resulting graph gives us what we want.
%	\item Finding a set of balancing edges as in the paper 'Cycle partitions of regular graphs' by Gruslys and Letzter is algorithmic. There is, however a step that succeeds with 'positive probability', depending on the choice of a permutation $\sigma$ of the vertices of $G$. I think we need a slightly more careful analysis of this probability to show a positive lower bound, so we can expect this step to run a number of times polynomial in $n$. That, or we somehow derandomize the algorithm.
%\end{enumerate}
%%\bibliography{bibliography}	
\section{Finding almost-Hamilton cycles}
\label{sec:findingcycles}
In this section we show how to algorithmically determine whether a dense, regular graph $G$ has a very long cycle (missing at most a constant number of vertices) and how to construct such a cycle if it exists. The idea is that we first use the algorithm of the previous section to find a robust partition $\mathcal{U} = \{U_1, \ldots, U_m\}$ of our input dense regular graph. Then we try to find a \textit{path system} $\mathcal{P}$ (defined below) that supplies all the edges of our desired cycle between the $U_i$.\footnote{If $U_i$ is a bipartite robust component with bipartition $A_i, B_i$ then $\mathcal{P}$ may contain edges from $G[A_i]$ and $G[B_i]$ but not from $G[A_i,B_i]$} What properties should the edges in such a path system have? For any (almost) Hamilton cycle $H$ of $G$, the edges of $H$ between the $U_i$ should connect up the $U_i$ in some sense; thus the path system $\mathcal{P}$ should be \textit{connecting}, which we define precisely below. The path system should also be balancing in some sense: if $U_i$ is a bipartite component with parts $A_i$ and $B_i$ then the edges of $H \cap G[A_i,B_i]$ hit an equal number of vertices from $A_i$ and $B_i$, so the remaining edges of $H$ (namely those of $\mathcal{P}$) should counter any imbalance in the sizes of $A_i$ and $B_i$. It turns out that $G$ has a Hamilton cycle if and only if there is a connecting, balancing path system (with respect to $\mathcal{U}$). This was established in \cite{kuhn2014robust}; see Lemma~\ref{lem:pathsystemtocycle} below, which uses robust expansion to connect a connecting, balancing path system into a Hamilton cycle. Furthermore, it was shown in \cite{CyclePartitions} that a balancing path system always exists for dense, regular graphs.

Thus the problem of deciding (almost) Hamiltonicity reduces to the problem of deciding the existence of a connecting path system. We show how to determine this in polynomial time, which relies on the fact that the number of parts in $\mathcal{U}$ is finite.

The constant number of vertices that our cycle might miss owes to the fact that it is not always possible to combine balancing and connecting path systems perfectly. Nonetheless, we shall see that a very long cycle exists if and only if there is a connecting path system.

%In this section we show how to algorithmically determine whether a dense regular graph $G$ has a very long cycle (missing at most a constant number of vertices). The idea is that we first use the algorithm of the previous section to find a robust decomposition $\mathcal{U} = \{U_1, \ldots, U_m\}$ of our input regular graph.  Roughly, it turns out that the desired cycle exists if and only if there is a suitable set of edges $S$ between the $U_i$ that suitably connects up the $U_i$. Determining whether such a set exists can be reduced to a finite problem making cuicial use of the fact that the number of $U_i$ is finite. \COMMENT{VP: probably more needed here}
%
%We will combine results from \cite{kuhn2014robust} and \cite{CyclePartitions}, as well as adding a few ingredients of our own in order to devise the algorithm. We begin with some preliminaries.

%focuses on finding long cycles in graphs with robust partitions. The main obstacles to finding Hamilton cycles are insufficient connectivity or unbalanced bipartite graphs. We try to solve these two problems separately and combine the solutions. We try to find two path systems, one that connects the components and one that balances the bipartite components, and combine these.

\subsection{Preliminaries}
In this subsection, we recall some definitions and results that will be used later. We begin by defining the structure required between the parts of our robust partition that ensures a Hamilton cycle.

A \emph{path system} $\mathcal{P} = \{P_1, \ldots, P_k \}$ in a graph $G$ is a collection of vertex-disjoint paths $P_1, \ldots, P_k$ in $G$. We also think of $\mathcal{P}$ as a subgraph $\mathcal{P} = \cup P_i \subseteq G$, so that $V(\mathcal{P})$ and $E(\mathcal{P})$ make sense.

\vspace{0.2 cm}
\noindent
{\bf Reduced graphs} - 
	Let $G$ be a graph and $\mathcal{U}$ a partition of $V(G)$. For a path system $\mathcal{P}\subseteq E(G)$ we define the \emph{reduced multigraph} $R_\mathcal{U}(\mathcal{P})$ of $\mathcal{P}$ with respect to $\mathcal{U}$ to be the multigraph with vertex set $\mathcal{U}$ and where there is an edge between $U,U'\in \mathcal{U}$ for each path in $\mathcal{P}$ whose endpoints are in $U$ and $U'$. We also define the \emph{reduced edge multigraph} $R'_\mathcal{U}(\mathcal{P})$ of $\mathcal{P}$ with respect to $\mathcal{U}$ as the multigraph with vertex set $\mathcal{U}$ and where there is an edge between $U,U'\in \mathcal{U}$ for each \emph{edge} in $\mathcal{P}$ with endpoints in $U,U'$. Note that both $R_\mathcal{U}(\mathcal{P})$ and $R'_\mathcal{U}(\mathcal{P})$ may contain loops and multiedges. We will often identify edges in $R_\mathcal{U}(\mathcal{P})$ (resp.\ $R'_\mathcal{U}(\mathcal{P})$) with their corresponding paths (resp.\ edges) in $\mathcal{P}$. We sometimes write $R(\mathcal{P})$ or $R'(\mathcal{P})$ if $\mathcal{U}$ is clear form the context.

\vspace{0.2 cm}
\noindent
{\bf Connecting and balancing path systems} -
	Let $G$ be a graph and $\mathcal{U}$ a partition of $V(G)$. A path system $\mathcal{P}\subseteq G$ is called \emph{$\mathcal{U}$-connecting} if $R_\mathcal{U}(\mathcal{P})$ is Eulerian, that is if $R_\mathcal{U}(\mathcal{P})$ is connected and all vertices have even degree.
%	Note that we omit $\mathcal{U}$ if it is clear from context.

	Let $A,B\subseteq V(G)$ be two disjoint sets. 
We say $\mathcal{P}$ is \emph{$k$-almost $(A,B)$-balancing} if 
$$\left|(|A|-e_\mathcal{P}(A,\overline{A\cup B})-2e_\mathcal{P}(A))-(|B|-e_\mathcal{P}(B,\overline{A\cup B})-2e_\mathcal{P}(B))\right|\leq k$$ 
and we say $\mathcal{P}$ is \emph{$(A,B)$-balancing} if it is $0$-almost $(A,B)$-balancing. The significance of this is that, given any cycle $C$ of $G$ that covers all vertices of $A \cup B$, if we delete from $C$ all edges of $E_G(A,B)$, the resulting path system will be $(A,B)$-balancing.

%$$|A|-e_\mathcal{P}(A,\overline{A\cup B})-2e_\mathcal{P}(A)=|B|-e_\mathcal{P}(B,\overline{A\cup B})-2e_\mathcal{P}(B)$$ 
%	We say $\mathcal{P}$ is \emph{$(A,B)$-balancing} if $$|A|-e_\mathcal{P}(A,\overline{A\cup B})-2e_\mathcal{P}(A)=|B|-e_\mathcal{P}(B,\overline{A\cup B})-2e_\mathcal{P}(B)$$ and we say $\mathcal{P}$ is \emph{$k$-almost $(A,B)$-balancing} if $$\left|(|A|-e_\mathcal{P}(A,\overline{A\cup B})-2e_\mathcal{P}(A))-(|B|-e_\mathcal{P}(B,\overline{A\cup B})-2e_\mathcal{P}(B))\right|\leq k.$$ 
	
%	Note that any cycle that covers $A\cup B$ induces an $A,B$-balancing path system.
	%TODO maybe elaborate on this 
	
	For a robust partition $\mathcal{V}=\{V_1,\dots,V_k,W_1,\dots,W_\ell\}$ of $G$ where $A_j,B_j$  is the corresponding bipartition of $W_j$ for $1\leq j\leq \ell$, we say $\mathcal{P}$ is $\mathcal{V}$-balancing if it is $(A_i,B_i)$-balancing for $1\leq i \leq \ell$, and we say $\mathcal{P}$ is $k$-almost $\mathcal{V}$-balancing if it is $k_i$-almost $(A_i,B_i)$-balancing for $1\leq i\leq \ell$ and $\sum_{i=1}^{\ell}k_i\leq k$. The \emph{$\mathcal{V}$-imbalance} of $\mathcal{P}$ is the smallest $k$ for which $\mathcal{P}$ is $k$-almost $\mathcal{V}$-balancing. We will omit $\mathcal{V}$ if it is clear from context.
	
The definitions introduced so far have been for $\mathcal{U}$ a partition of $V(G)$, but they extend in the obvious way when $\mathcal{U}$ is a subpartition of $V(G)$, i.e.\ where $\mathcal{U}$ consists of disjoint subsets of vertices that do not necessarily cover all of $V(G)$ (and where it is implicitly assumed that $V(\mathcal{P}) \subseteq \cup_{U \in \mathcal{U}}U$).

%Remark: subsets don't have to be a partition of the vertices. D6 and D7 are also gone, but they are minor technical details

%	\begin{lemma}[Lemma 4.10 in \cite{kuhn2014robust}]
%		\label{lem:changebiprobust}
%		Let $0< 1/n \ll \rho\leq \gamma\ll \nu\ll \tau\ll \alpha<1$ and suppose that $G$ is a $D$-regular graph on $n$ vertices where $D\geq \alpha n$. Suppose further that $G[U]$ is a bipartite $(\rho,\nu,\tau)$-robust expander component of $G$. Let $U'\subseteq V(G)$ be such that $|U\triangle U'|\leq \gamma n$. Then $G[U']$ is a bipartite $(3 \gamma, \nu/2,2 \tau)$-robust expander component of $G$.
%	\end{lemma}

\begin{lemma}[Lemmas 7.8 and 6.2 in \cite{kuhn2014robust}]
	\label{lem:pathsystemtocycle}
	
	Let $n,k,\ell\in \mathbb{N}_0$ and $0<1/n\ll\rho\ll\nu\ll\tau\ll\eta<1$. Let $G$ be a graph on $n$ vertices and suppose that $\mathcal{V}:=\{V_1,\dots,V_k,W_1,\dots,W_\ell\}$ is a weak robust subpartition of $G$ with parameters $\rho,\nu,\tau,\eta,k,\ell$. For each $1\leq j\leq \ell$, let $A_j,B_j$ be the bipartition of $W_j$. If $\mathcal{P}$ is a $\mathcal{V}$-connecting, $\mathcal{V}$-balancing path system such that $|V(\mathcal{P})\cap X| \leq \rho n$ for all $X\in \mathcal{V}$ then there is a cycle $C$ in $G$ that contains every vertex in $\cup_{U \in \mathcal{V}}U$. Furthermore there is a polynomial-time algorithm for constructing such a cycle.
\end{lemma}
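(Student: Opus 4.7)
The proof proceeds by combining the path system $\mathcal{P}$ with short ``filling'' paths found inside each part of $\mathcal{V}$ so as to form a single cycle covering all vertices of $\bigcup_{U \in \mathcal{V}} U$. Since $R_\mathcal{V}(\mathcal{P})$ is connected and every vertex has even degree, we find an Eulerian circuit of $R_\mathcal{V}(\mathcal{P})$ in polynomial time. This circuit gives a cyclic ordering of the paths of $\mathcal{P}$: each time the circuit passes through a vertex $X \in \mathcal{V}$ of the reduced multigraph, two edges are incident, corresponding to a pair of endpoints of paths of $\mathcal{P}$ that lie in $X$. These endpoints must be joined by a path lying entirely in $X$, and the collection of all such filling paths, concatenated with $\mathcal{P}$ in the order dictated by the Eulerian circuit, will form the desired cycle $C$.

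The task therefore reduces, for each $X \in \mathcal{V}$ separately, to the following: given a prescribed list of at most $\rho n$ terminal pairs in $V(\mathcal{P}) \cap X$, find vertex-disjoint paths in $G[X]$ realising these pairs whose union spans $X \setminus I_X$, where $I_X$ denotes the internal (non-endpoint) vertices of $\mathcal{P}$ lying in $X$. For a non-bipartite part $V_i$, we first delete $I_{V_i}$; since $|I_{V_i}| \leq \rho n$ and $G[V_i]$ is a $(\rho,\nu,\tau)$-robust expander component, Lemma~\ref{lem:stillRobustExpComp} guarantees that the resulting induced subgraph is still a robust expander of comparable parameters, and condition (D4$'$) keeps the minimum degree essentially linear. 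A spanning-linkage / Hamilton-connectedness result for robust expanders, proved by the P\'osa rotation-extension technique, then provides the required collection of paths; this procedure is constructive and runs in polynomial time.

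For a bipartite part $W_j = A_j \cup B_j$, the analogous statement is complicated by a parity constraint: a spanning union of paths in a bipartite graph whose endpoints lie on prescribed sides imposes a relationship between the number of ``free'' vertices in $A_j$ and in $B_j$. This is precisely what the $\mathcal{V}$-balancing condition on $\mathcal{P}$ is engineered to satisfy. Indeed, each internal vertex of $\mathcal{P}$ in $A_j$ and each $A_j$-endpoint of an edge of $\mathcal{P}$ going to $\overline{W_j}$ or to $A_j$ itself removes one ``free slot'' from $A_j$, and the balancing identity equates the number of free slots on the two sides. After deleting $I_{W_j}$ (and applying Lemma~\ref{lem:stillbiprobustexp} to preserve the bipartite robust expander component structure), a bipartite version of Hamilton connectedness, again via a rotation-extension argument adapted to the bipartite setting and the linear minimum degree given by (D5$'$), supplies the required spanning linkage.

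The main obstacle is the spanning-linkage/Hamilton-connectedness statement inside a (bipartite) robust expander: showing that \emph{any} list of $O(\rho n)$ terminal pairs can be realised by a spanning union of paths, and doing so in polynomial time. This is handled by a careful rotation-extension argument that uses the robust neighbourhood condition to rotate one endpoint of a maximal path into a new neighbour until a closing edge becomes available, iterating to absorb all remaining vertices. Once this ingredient is in place, all remaining steps are routine and polynomial-time: computing the Eulerian circuit, removing the constantly many internal vertices from each part, running rotation-extension inside each of the constantly many parts, and splicing the resulting paths with $\mathcal{P}$ to produce $C$.
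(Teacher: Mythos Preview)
Your high-level plan is sound and follows the same basic architecture as the argument in \cite{kuhn2014robust}: use an Eulerian circuit in $R_{\mathcal{V}}(\mathcal{P})$ to decide which endpoints must be linked within each part, and then use the robust-expander structure of each part to find a spanning family of paths realising these links. This is essentially correct.

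Where your proposal diverges from the paper is in how the ``spanning linkage inside a robust expander'' step is justified. The paper does not re-prove this from scratch; it defers to Lemmas~7.8 and~6.2 of \cite{kuhn2014robust}, which proceed via an intermediate object called a \emph{$\mathcal{V}$-tour}. Lemma~7.8 upgrades the path system $\mathcal{P}$ (satisfying the present, weaker notion of balancing) to a $\mathcal{V}$-tour satisfying a stronger balance condition, and Lemma~6.2 then reduces the construction of the desired cycle to finding a Hamilton cycle in a single robust expander. For the algorithmic claim, the paper then appeals to Theorem~5 of \cite{christofides2012finding}, which gives a polynomial-time algorithm for Hamilton cycles in robust expanders. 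Your route skips the $\mathcal{V}$-tour and asserts directly that P\'osa rotation--extension yields a \emph{spanning multi-pair linkage} in (bipartite) robust expanders, constructively and in polynomial time.

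That assertion is where some care is needed. Rotation--extension in its basic form produces a Hamilton path between two fixed endpoints; extending it to realise \emph{many} prescribed terminal pairs simultaneously while covering all vertices is genuinely more delicate (one has to argue that extending one path does not destroy the robust-expander structure needed for the next, and in the bipartite case one must track parities throughout). The results you need do exist, but they are not one-liners, and the known polynomial-time algorithms (as in \cite{christofides2012finding}) are not pure rotation--extension. So your sketch is not wrong, but the sentence ``proved by the P\'osa rotation--extension technique'' is hiding exactly the non-trivial content that the paper outsources to \cite{kuhn2014robust} and \cite{christofides2012finding}. If you wish to keep a self-contained proof, you should either spell out the multi-pair spanning-linkage argument in detail (including the bipartite case), or, as the paper does, cite the relevant lemmas and the algorithmic result of \cite{christofides2012finding}.
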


\begin{remark}
	\label{rem:cyclequickenough}
	Lemma~\ref{lem:pathsystemtocycle} follows directly from Lemmas 7.8 and 6.2 in \cite{kuhn2014robust}. We do not state these results because their statements involve extraneous definitions not required for our purposes. Instead we briefly discuss the relevant results informally and how to make them algorithmic.
	
	In this paper, our definition of $\mathcal{V}$-balancing is different from that used in \cite{kuhn2014robust}. Lemma 7.8 from \cite{kuhn2014robust} is used to show that a path system $\mathcal{P}$ satisfying the conditions of Lemma~\ref{lem:pathsystemtocycle} can be used to construct a so-called $\mathcal{V}$-tour, which satisfies their stronger definition of balance. The proof is constructive and easily gives a polynomial-time algorithm for constructing such a $\mathcal{V}$-tour.
	Lemma 6.2 then shows how, given a $\mathcal{V}$-tour, one can construct a cycle $C$ as in Lemma~\ref{lem:pathsystemtocycle}. The proof shows explicitly how to reduce this problem to that of finding a Hamilton cycle in a robust $(\nu, \tau)$-expander. While they appeal to their Theorem 6.7, we can do this in polynomial time by appealing to Theorem 5 in \cite{christofides2012finding}.\NEWCHANGE{added the part that it's in $NC^5$. I'm pretty sure we can't get anything better than polynomial from $NC^5$ without going through their algorithm in detail, so this explains why we don't give more detail. VP: I removed $NC^5$ because we don't explain it. I think we can leave it like this.}
\COMMENT{VP: check the whole remark is clear and accurate. FS: \textbf{resolved:} checked} \COMMENT{VP: maybe it's a good idea to define $\mathcal{V}$-tour just as in \cite{kuhn2014robust} paper except replacing (T4) with \eqref{eq:balancing}. Then refer to $\mathcal{V}$-tour in Lemma~\ref{lem:pathsystemtocycle}. Then we can more easily refer to this definition later too.}
	
%	In \cite{kuhn2014robust}, a stronger notion of $\mathcal{V}$-balancing is defined than is defined here (a
%	
%	Lemma~6.2 in \cite{kuhn2014robust} shows how to obtain the cycle covering $\cup_{U \in \mathcal{V}}$ given a path system $\mathcal{P}$
%	
%	
%	 These two lemmas make adjustments to $G$ and $\mathcal{P}$ such that Theorem 6.7 in \cite{kuhn2014robust} finds the appropriate paths in each part of the graph. Lemmas 7.8 and 6.2 can easily be checked to be algorithmic, and the usage of Theorem 6.7 can be replaced with Theorem 5 in \cite{christofides2012finding}, which gives an explicitly polynomial time algorithm to accomplish the same task.
%	Therefore the cycle in Lemma~\ref{lem:pathsystemtocycle} can be found algorithmically in time polynomial in $n$.\\
	
\end{remark}

Next\COMMENT{VP: Everything from here to end of section not fully checked yet. - \textbf{resolved}} we will state the results from \cite{CyclePartitions} that allow one to find balancing path systems in dense regular graphs. Their setup is different from \cite{kuhn2014robust}, so we now introduce the necessary definitions.

%These first definitions and results are from \cite{CyclePartitions} and serve to find a balancing path system.

\vspace{0.2 cm}
\noindent
{\bf $\alpha$-sparse and $\alpha$-far from bipartite} -  
	Let $G$ be a graph on $n$ vertices. A \textit{cut} of a set $A\subseteq V(G)$ is a partition $X,Y$ of $A$, where $X$ and $Y$ are both non-empty. We say that a cut $X,Y$ is \textit{$\alpha$-sparse} if $e_G(X,Y)\leq \alpha |X||Y|$. We say that a set $A\subseteq V(G)$ is \textit{$\alpha$-almost-bipartite} if there exists a partition $X,Y$ of $A$ such that $G[A]$ has at most $\alpha n^2$ edges that are not in $E_G(X,Y)$. Otherwise, we say that $A$ is \textit{$\alpha$-far-from-bipartite}. 
	
	%A \textit{path system} is a vertex-disjoint collection of non-trivial paths. We often think of a path system $\mathcal{P}$ as a graph in which every connected component is a path.

\vspace{0.2 cm}
\noindent
{\bf Clustering} - 	Let $c_{min}\in (0,1)$ and let $G$ be a $D$-regular graph on $n$ vertices with $D\geq c_{min}n$. A \textit{clustering of $G$} with parameters $\zeta,\delta,\gamma,\beta,\eta$ is a partition $\{A_1,\dots,A_r\}$ of $V(G)$ into non-empty sets satisfying the following properties:
	\begin{enumerate}[noitemsep]
		\item[(a)] $G$ has at most $\eta n^2$ edges with ends in different $A_i$'s;
		\item[(b)] for each $i\in[r]$, the minimum degree of $G[A_i]$ is at least $\delta n$;
		\item[(c)] for each $i\in [r]$, $A_i$ has no $\zeta$-sparse cuts;
		\item[(d)] for each $i\in[r]$, $A_i$ is either $\beta$-almost bipartite or $\gamma$-far from bipartite. If $A_i$ is $\beta$-almost-bipartite, we also give an appropriate partition $X_i,Y_i$.
	\end{enumerate}
	We will always choose the parameters such that $1/n\ll \eta \ll \beta \ll \gamma\ll\zeta\ll \delta$.

The following theorem says that a clustering always has a balancing path system. Here we think of a path system as a subgraph of $G$.

\begin{theorem}[Lemma 5 in \cite{CyclePartitions}]
	\label{th:balance}
	
	Let $1/n \ll \eta \ll \beta \ll \xi,\gamma \ll \zeta \ll \delta < 1$.	
	Suppose $G$ is an $n$-vertex, $D$-regular graph with $D \geq c_{min}n$ and $\mathcal{A} = \{A_1,\dots,A_r\}$ is a clustering of $G$ with parameters $\zeta,\delta,\gamma,\beta,\eta$, and assume that whenever $A_i$ is $\beta$-almost-bipartite the corresponding partition of $A_i$ is $X_i,Y_i$. Then there exists a path system $H\subseteq G$ with the following properties:
	
	\begin{enumerate}[noitemsep]
		\item[(a)] For each $i\in[r]$ such that $A_i$ is $\beta$-almost-bipartite, we have
		$$ 2e_H(X_i)-2e_H(Y_i)+e_H(X_i,\overline{A_i})-e_H(Y_i,\overline{A_i})=2(||A_j|-|B_j||);$$
		\item[(b)] The number of leaves (i.e.\ vertices of degree $1$) of $H$ in $A_i$ is even for all $1\leq i\leq r$;
		\item[(c)] $|V(H)|\leq \xi n$.
	\end{enumerate}
	Furthermore, there is a randomized algorithm that finds $H$ with probability $p>\frac{3}{4}$ and runs in time polynomial in $n$.
\end{theorem}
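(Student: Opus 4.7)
The plan is to exploit the $D$-regularity of $G$ to reduce condition (a) to a random-sampling problem and then to fix up the path-system structure. First, summing degrees over $X_i$ and over $Y_i$ separately in the $D$-regular graph $G$ yields the identity
\[
D(|X_i|-|Y_i|) \;=\; 2e_G(X_i) \,-\, 2e_G(Y_i) \,+\, e_G(X_i,\overline{A_i}) \,-\, e_G(Y_i,\overline{A_i})
\]
for each $\beta$-almost-bipartite cluster $A_i$. Clustering conditions (a) and (d) bound the right-hand side in absolute value by $O((\beta+\eta)n^2)$, so $\bigl||X_i|-|Y_i|\bigr| = O((\beta+\eta)n/c_{min})$; clustering condition (b) gives $r \leq 1/\delta = O(1)$. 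Crucially, the identity says the target right-hand side of (a), namely $2(|X_i|-|Y_i|)$, is exactly $2/D$ times a signed sum over available $G$-edges around $A_i$.

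The next step of the plan is to independently sample each such contributing edge with probability $2/D$ to form a random subgraph $H_0$, so that the expected value of the left-hand side of (a) applied to $H_0$ equals $2(|X_i|-|Y_i|)$ exactly. Since the total candidate pool is at most $O((\beta+\eta)n^2)$ edges per cluster, standard Chernoff bounds yield (with high probability) $|V(H_0)| = O((\beta+\eta)n) \ll \xi n$, maximum degree $O(\log n)$, and per-cluster deviation $O(\sqrt{n\log n})$ from the target. Next, greedily delete a few edges from $H_0$ to reduce the maximum degree to $2$ and break any cycles, producing a path system $H_1$ with essentially the same approximate imbalance per cluster.

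Finally, the residual $O(\sqrt{n\log n})$ per-cluster deviation must be corrected exactly. The plan is to swap or add a small number of further edges, drawn from the large untouched pool of candidate edges of each signed type within the cluster, choosing them to use vertices disjoint from $V(H_1)$ so that the result remains a path system. Condition (b) is enforced by, in each cluster with wrong leaf parity, adjusting one internal edge and compensating elsewhere to preserve (a). The main obstacle is this combined fix-up: simultaneously ensuring (a) exactly, (b), and the path-system structure. The abundance of candidate edges relative to the needed corrections provides enough flexibility; rerunning the randomized procedure a constant number of times boosts the overall success probability above $3/4$, and every step runs in polynomial time.
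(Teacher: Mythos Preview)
The paper does not provide a proof of this statement: Theorem~\ref{th:balance} is quoted verbatim as Lemma~5 of \cite{CyclePartitions} and is used as a black box. The only commentary in the paper is Remark~\ref{rem:balsmall}, which notes that the proof in \cite{CyclePartitions} is probabilistic and can be read as a randomised polynomial-time algorithm with success probability boosted above $3/4$. So there is nothing in this paper to compare your argument against at the level of detail you have written.

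That said, your outline is in the right spirit---the degree-counting identity
\[
D(|X_i|-|Y_i|) = 2e_G(X_i) - 2e_G(Y_i) + e_G(X_i,\overline{A_i}) - e_G(Y_i,\overline{A_i})
\]
is correct and is indeed the natural starting point, and random sampling with rate $2/D$ to hit the target in expectation is a reasonable first move. However, as written the proposal has a genuine gap in the fix-up stage. Once you delete edges to bring the maximum degree down to $2$ and kill cycles, you no longer control the signed imbalance: each deleted edge changes some cluster's count by $\pm 1$ or $\pm 2$, and since you may delete $\Theta(n/\log n)$ or more edges (a vertex of degree $\Theta(\log n)$ forces many deletions, and there can be many such vertices), the cumulative drift can swamp the $O(\sqrt{n\log n})$ Chernoff error you started with. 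Your final ``swap or add a small number of further edges'' step then has to repair a potentially much larger deviation while simultaneously respecting the path-system constraint, the parity condition (b), and the interactions between clusters (an inter-cluster edge affects two balances at once). You have not explained how to do this, and it is exactly where the work lies; the phrase ``abundance of candidate edges\ldots provides enough flexibility'' is not an argument. A cleaner route is to control the degree distribution of the sample more tightly from the start (or to sample a structured object rather than individual edges) so that the passage to a path system costs only $O(1)$ edits per cluster.
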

\begin{remark}
	\label{rem:balsmall}
Note firstly that (a) says that $H$ is an $\mathcal{A}$-balancing path system. We shall see in the next lemma that a robust partition is a clustering, so this gives us a way of obtaining balancing path systems for robust partitions. 
	
	The lemma above is not stated to be algorithmic in \cite{CyclePartitions}, but in fact their probabilistic proof essentially gives a (randomised) polynomial-time algorithm. 
%	Further the path system with properties (a) and (b) appears as an intermediate result. % As the projected output of Lemma 16
	Also, their proof requires that the probability $p$ of success be positive, but the analysis can easily be modified to show a lower bound of e.g.\ $p>\frac{3}{4}$.
\end{remark}

%The next definitions and Lemmas are taken from \cite{kuhn2014robust} and deal with connecting path systems and finding the long cycle.
%\begin{definition}
%	Let $G$ be a graph and $\mathcal{U}$ a partition of $V(G)$. For a path system $\mathcal{P}\subseteq G$ we define the \emph{reduced 	 multigraph} $R_\mathcal{U}(\mathcal{P})$ of $\mathcal{P}$ with respect to $\mathcal{U}$ to be the multigraph with vertex set $\mathcal{U}$ and an edge between $U,U'\in \mathcal{U}$ for every path in $\mathcal{P}$ with endpoints in $U$ and $U'$. Note that  $R_\mathcal{U}(\mathcal{P})$ may contain loops and multiedges. We will often identify the edges in $R_\mathcal{U}(\mathcal{P})$ with their corresponding paths in $\mathcal{P}$
%\end{definition}

As Theorem~\ref{th:balance} uses the concept of a clustering, we use the following lemma to show that a robust partition is also a clustering. This allows us to apply Theorem~\ref{th:balance} to a robust partition.\COMMENT{\textbf{resolved} move following lemma to previous section or to an appendix}
\begin{lemma}
	\label{lem:cluster}
	For every non-decreasing function $f: (0,1) \to (0,1)$ there is a non-decreasing function $f': (0,1) \to (0,1)$ satisfying $f'(x)<f(x)$ for all $x \in (0,1)$ such that the following holds. For any choice of parameters 
$\rho, \nu, \tau, \alpha, n, k, \ell$ satisfying 	
	 $1/n \leq \rho \ll_{f'} \nu \leq \tau \ll_{f'} \alpha$ and $n, k, \ell \in \mathbb{N}$ there exist parameters $\zeta,\delta,\gamma,\beta,\eta$ satisfying 
$\rho \ll_f \eta \ll_f  \beta \ll_f \gamma \ll_f \zeta \ll_f \nu $	
%	$\rho<f(\eta); \eta <f(\beta);\beta<f(\gamma); \gamma<f(\zeta);\zeta<f(\nu) $ 
	and $\tau < \delta < \alpha$ 
	such that if $G$ is an $n$-vertex $D$-regular graph with $D\geq \alpha n$ and $\mathcal{V}$ is a robust partition of $G$ with parameters $\rho, \nu,\tau,k,\ell$ then $\mathcal{V}$ is also clustering with parameters $\zeta,\delta,\gamma,\beta,\eta$. 
\end{lemma}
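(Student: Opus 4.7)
The plan is to verify that each of the four clustering conditions (a)--(d) holds for the robust partition $\mathcal{V}=\{V_1,\dots,V_k,W_1,\dots,W_\ell\}$, choosing parameters $\eta \ll \beta \ll \gamma \ll \zeta \ll \nu$ and $\delta := \alpha^2/2$. A preliminary observation I will use throughout is that (D6) combined with $D \geq \alpha n$ forces $m := k+\ell \leq 2/\alpha$, so the number of parts is a constant depending only on $\alpha$.

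Conditions (a) and (b) are essentially immediate. For (a), each $X \in \mathcal{V}$ is a $\rho$-component by (D2)/(D3), so summing $e_G(X, \overline{X}) \leq \rho n^2$ over $X$ and halving gives at most $m\rho n^2/2 \leq \rho n^2/\alpha$ cross edges; taking $\eta$ of order $\rho/\alpha$ works. For (b), (D4) gives $d_X(x) \geq D/m \geq \alpha^2 n/2$ for every $x \in X$, so $\delta(G[X]) \geq \delta n$ with $\delta = \alpha^2/2$, which satisfies $\tau < \delta < \alpha$.

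For (d), each $W_j$ is $\beta$-almost bipartite with the partition $(A_j, B_j)$, since the defining bound $e_G(A_j, \overline{B_j}) + e_G(B_j, \overline{A_j}) \leq \rho n^2$ of $\rho$-close-to-bipartite already controls all edges with both endpoints inside $A_j$ or inside $B_j$ by $\rho n^2 \leq \beta n^2$. I then show each $V_i$ is $\gamma$-far from bipartite by contradiction: a hypothetical $\gamma$-almost-bipartite partition $P_1, P_2$ of $V_i$, together with $D$-regularity and the $\rho$-component property, would force $|P_1|, |P_2| = \Omega(n)$ and $||P_1|-|P_2|| = O((\gamma+\rho)n/\alpha)$ by a standard edge-counting argument, showing that $V_i$ is $\rho''$-close to bipartite for some $\rho'' = O((\gamma+\rho)/\alpha)$. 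But a robust $(\nu,\tau)$-expander cannot be $\rho''$-close to bipartite once $\rho'' \ll \nu^2\alpha^2$: applying the robust expansion to the larger side $P$ of the would-be bipartition, a Markov bound using $e(P) \leq \rho''n^2$ shows that $|\RN_\nu(P) \cap P|$ is negligible, so $\RN_\nu(P)$ is forced almost entirely into $\overline{P}$, but then $|\RN_\nu(P)| \geq |P| + \nu|V_i|$ eventually exceeds $|\overline{P}| + o(|V_i|)$ since $|P| \approx |\overline{P}|$, which is the desired contradiction.

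Condition (c) is the main obstacle: for each $X \in \mathcal{V}$ and every nontrivial cut $S, T$ of $X$, I must show $e_G(S, T) > \zeta|S||T|$. For a non-bipartite component $V_i$, assume $|S| \leq |T|$ and split on $|S|$: if $|S| \leq \tau|V_i|$, then (D4) yields $d_T(v) \geq D/m - |S| = \Omega(\alpha n)$ for every $v \in S$, so $e(S,T)/(|S||T|) = \Omega(\alpha)$; if $|S|$ lies in the expansion range $[\tau|V_i|, (1-\tau)|V_i|]$, robust expansion gives $|\RN_\nu(S) \cap T| \geq \nu|V_i|$ and hence $e(S,T) \geq \nu^2|V_i|^2 = \Omega(\nu^2\alpha^2 n^2)$. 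For a bipartite component $W_j$ the analysis is harder because the bipartite robust expansion is asymmetric --- only subsets of $A_j$ are guaranteed to expand. Write $S^* = S \cap A_j$, $T^* = T \cap A_j$, $S^{**} = S \cap B_j$, $T^{**} = T \cap B_j$; I then distinguish cases. When $S$ or $T$ essentially lies in one side of the bipartition, (D5) alone delivers $\Omega(|S|\alpha n)$ crossing edges. When $|S^*|$ lies in $[\tau|A_j|, (1-\tau)|A_j|]$, I apply bipartite robust expansion to $S^*$: the fraction of $\RN_\nu(S^*)$ inside $A_j$ is negligible by a Markov bound against $e(A_j) \leq \rho n^2$, so most robust neighbours land in $B_j$; splitting these between $S^{**}$ and $T^{**}$, either $|\RN_\nu(S^*) \cap T^{**}|$ is already $\Omega(\nu|W_j|)$, or else $|S^{**}|$ is large enough that the symmetric argument applied to $T^* = A_j \setminus S^*$ succeeds, with $||A_j|-|B_j|| \leq \rho n$ ensuring that at least one of the two is usable. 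Finally, when $|S^*|$ is extreme (e.g.\ less than $\tau|A_j|$) but $|S|$ is large, most of $S$ lies in $B_j$, and (D5) applied to vertices of $S^{**}$ forces $d_{T^*}(v) = \Omega(\alpha n)$ and hence $\Omega(|S^{**}|\alpha n)$ edges from $S^{**}$ to $T^* \subseteq A_j$. Taking $\zeta$ sufficiently small relative to $\nu^2\alpha^4$ then closes (c), and the constraints gathered across (a)--(d) are mutually compatible within the required hierarchy, completing the argument.
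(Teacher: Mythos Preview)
Your proposal is correct and follows essentially the same route as the paper's proof: (a) by summing the $\rho$-component bounds, (b) from (D4) together with $m\le 2/\alpha$, (c) by splitting on whether the smaller side is below the $\tau$-threshold (use (D4)/(D5)) or in the expansion range (use robust / bipartite robust expansion), and (d) by observing $W_j$ is $\beta$-almost bipartite directly and that each $V_i$ is far from bipartite. The only real difference is in (d) for the non-bipartite parts: you argue by contradiction---a hypothetical $\gamma$-almost-bipartite partition of $V_i$ forces $V_i$ to be $\rho''$-close to bipartite, and then robust expansion applied to one side gives a contradiction---whereas the paper shows directly that for any cut $X,Y$ of $V_i$ one has $e(Y)\ge \nu^2|V_i|^2/2$ (when $|X|\ge\tau|V_i|$) by noting $|\RN_\nu(Y)\cap Y|\ge \nu|V_i|$; both arguments invoke robust expansion in the same way, just packaged differently.
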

\begin{remark}
A proof of the above lemma is provided in the appendix for completeness. 
%The lemma should be read as saying that for any choice of hierarchy $1/n \ll \eta \ll \beta \ll \xi,\gamma \ll \zeta \ll \delta < 1$ (namely that required in Theorem~\ref{th:balance}), there is a choice of hierarchy $1/n \ll \rho \ll \nu \leq \tau \ll \alpha$ such that a robust partition of $G$ with parameters $\rho, \nu,\tau,\alpha,k,\ell$ is also a clustering of $G$ with parameters $\zeta,\delta,\gamma,\beta,\eta$.

%The extra condition $\zeta < f(\nu)$ will be needed later for technical reasons.
\end{remark}

\subsection{Path systems and long cycles}

The first lemmas in this subsection, \ref{le:redCsystem} to \ref{lem:connect} show how to find connecting path systems. The rest of the chapter shows how to combine all the elements. Lemma~\ref{lem:combine} allows us to combine balancing and connecting path systems into a single path system that is connecting and almost balancing, and Lemma~\ref{lem:cycle} allows us to extend this path system into a very long cycle (by applying Lemma~\ref{lem:pathsystemtocycle}). At the end of the section comes the proof of Theorem~\ref{th:result}, which describes the whole algorithm.

\begin{lemma}
\label{le:redCsystem}
Let $G$ be a graph, let $\mathcal{U}=\{U_1,\dots,U_m\}$ be a partition of $V(G)$, and let $\mathcal{C}$ be a $\mathcal{U}$-connecting path system in $G$. Then there exists a $\mathcal{U}$-connecting path system $\mathcal{C}'$ such that 
\begin{itemize}[noitemsep]
\item[(a)]$E(\mathcal{C}') \cap E(G[V_i]) = \emptyset$ for all $i = 1, \ldots, m$ and 
\item[(b)]$|E(\mathcal{C}') \cap E_G(V_i,V_j)| \leq 2$ for all $1 \leq i < j \leq m$. 
\end{itemize}
\end{lemma}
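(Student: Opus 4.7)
The plan is to take $\mathcal{C}'$ to be a $\mathcal{U}$-connecting path system in $G$ minimizing $|E(\mathcal{C}')|$; such a $\mathcal{C}'$ exists since $\mathcal{C}$ itself is $\mathcal{U}$-connecting. I will then show by contradiction that any such minimizer automatically satisfies (a) and (b).

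To verify (a), suppose $\mathcal{C}'$ contains an intra-part edge $uv$ with $u,v \in U_i$. Deleting $uv$ from the path $P \in \mathcal{C}'$ that contains it splits $P$ into two subpaths (and any resulting singleton is discarded). If $U_a, U_b$ are the parts containing the two endpoints of $P$, then the edge $U_aU_b$ contributed by $P$ in $R_\mathcal{U}(\mathcal{C}')$ is replaced by at most two new edges, each incident to $U_i$. A direct parity check shows that all vertex degrees in $R_\mathcal{U}$ remain even (the degree at $U_i$ changes by $\pm 2$ and all other degrees are unchanged), and connectivity is preserved (the new pair of edges provides a walk from $U_a$ to $U_b$ through $U_i$; when a singleton is discarded, the edge $U_aU_b$ is replaced by a single edge with the same endpoints in $R_\mathcal{U}$). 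Hence the modified path system is still $\mathcal{U}$-connecting but has one fewer edge, contradicting minimality.

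To verify (b), suppose $\mathcal{C}'$ contains three edges $e_1,e_2,e_3 \in E_G(U_i,U_j)$. I delete two of them, say $e_1$ and $e_2$, splitting the containing path(s) at each. A case analysis (on whether the two edges lie in the same or different paths of $\mathcal{C}'$, and on whether either endpoint of a deleted edge coincides with a path endpoint) shows that each individual deletion changes the degrees of $U_i$ and $U_j$ in $R_\mathcal{U}$ by $\pm 1$ apiece and leaves all other degrees untouched; so two deletions together change the parity at each of $U_i, U_j$ by an even amount, preserving Eulerianness. Connectivity is preserved because the splits only add new edges to $R_\mathcal{U}$ through $U_i, U_j$ while removing edges whose endpoints are also reconnected via these new edges; additionally $e_3$ continues to lie in some path, keeping $U_i$ and $U_j$ connected. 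Thus the new path system is still $\mathcal{U}$-connecting but has two fewer edges, contradicting minimality.

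The main obstacle is the parity-and-connectivity bookkeeping in case (b): in particular, when two of the edges $e_1,e_2,e_3$ lie within a single path of $\mathcal{C}'$ (the sub-path between them must be handled carefully so that the three resulting pieces are genuine subpaths), and when a deletion at a path endpoint yields a singleton that must be discarded. In each such subcase one must check that the net change to $R_\mathcal{U}$ preserves both parity and connectedness before the minimality contradiction can be concluded.
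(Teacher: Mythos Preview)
Your overall strategy—take $\mathcal{C}'$ to be edge-minimal among all $\mathcal{U}$-connecting path systems—is sound and in fact cleaner than the paper's iterative construction. Your argument for (a) is fine: deleting an intra-part edge replaces one $R$-edge by a walk of length at most two through $U_i$, so parity and connectivity survive.

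The gap is in (b). Your connectivity claim rests on the sentence ``$e_3$ continues to lie in some path, keeping $U_i$ and $U_j$ connected,'' but this is false as stated: the $R$-edge contributed by the path through $e_3$ is determined by the \emph{endpoints} of that path, not by which parts it visits internally. Concretely, take $m=3$ with paths $P_1 = ab$ (parts $U_1$--$U_2$), $P_2 = v_1v_2v_3$ (parts $U_3$--$U_1$--$U_2$), $P_3 = w_1w_2w_3$ (parts $U_3$--$U_2$--$U_1$); the three $U_1$--$U_2$ edges are $e_1=ab$, $e_2=v_2v_3$, $e_3=w_2w_3$, and $R$ is the triangle. Deleting $e_1,e_2$ leaves two paths both with $R$-edge $U_3U_1$, so $U_2$ becomes isolated—even though $e_3$ still sits inside $P_3$. (This particular system is not edge-minimal, but the example shows your justification cannot be correct: you are asserting that \emph{any} pair works, and that is simply not true.)

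What is true—and what the paper uses—is that one can always choose a \emph{specific} pair among the three. Since $R_{\mathcal{U}}(\mathcal{C}')$ is Eulerian it is $2$-edge-connected, so by Menger there are two edge-disjoint $U_i$--$U_j$ paths $Q_1,Q_2$ in $R$. Each $e_s$ lies on a path of $\mathcal{C}'$ corresponding to a single $R$-edge, and that $R$-edge belongs to at most one of $Q_1,Q_2$; by pigeonhole two of $e_1,e_2,e_3$ miss the same $Q_t$. Deleting those two replaces two $R$-edges by walks that reroute through $U_i$ and $U_j$, and the surviving $Q_t$ certifies that $U_i$ and $U_j$ remain connected. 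With this choice your minimality contradiction goes through; without it, the proof is incomplete.
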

\begin{proof}
	For any path $P=v_1v_2\cdots v_j$ in $\mathcal{C}$, if two vertices of $P$ belong to the same component $U \in \mathcal{U}$, let $v_a$ and $v_b$ be the first and last vertices of $P$ that belong to $U$ and replace $P$ with the paths $v_1Pv_a$ and $v_bPv_j$; it is easy to see that the resulting path system is $\mathcal{U}$-connecting (see Figure~\ref{fig:breakpaths}). We make replacements as described above until no paths contain multiple vertices from the same component and we call the resulting $\mathcal{U}$-connecting path system $\mathcal{C}^*$. 
	\begin{figure}
		\centering
		\includegraphics[width=0.45\textwidth]{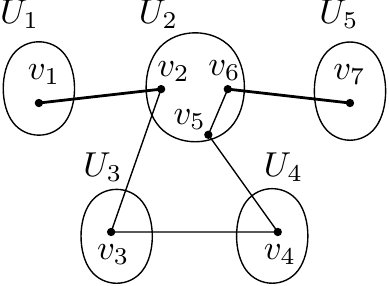}
		\caption{Example: The path $v_1\dots v_7$ from $U_1$ to $U_5$ has the edges between $v_2$ and $v_6$ pruned, resulting in two paths (thick lines), one from $U_1$ to $U_2$ and one from $U_2$ to $U_5$. Note that this ensures that $\mathcal{C}'$ contains no edges inside components.}
		\label{fig:breakpaths}
	\end{figure}	
	Next we show how to reduce the number of edges between components.
%	consider pairs of components $X,Y\in \mathcal{U}$ where $E_{\mathcal{C}'}(X,Y)>2$.
	\begin{claim}
Let $\mathcal{D}$ be a $\mathcal{U}$-connecting path system (i.e.\ $R_{\mathcal{U}}(\mathcal{D})$ is Eulerian).
		For $X,Y\in \mathcal{U}$ such that $E_{\mathcal{D}}(X,Y)>2$, it is possible to find two edges $e,f \in E_{\mathcal{D}}(X,Y)$ such that $\mathcal{D}' = \mathcal{D} \setminus \{e,f \}$ is a $\mathcal{U}$-connecting path system. (Here deleting $e, f$ from $\mathcal{D}$ may create isolated vertices which we remove to form	 $\mathcal{D} \setminus \{e,f \}$.)
		%such that the number of leaves in $V_i$ and $V_j$ remains positive and that $\mathcal{R}_\mathcal{U}(\mathcal{C}'\setminus \{e_1,e_2\})$ is connected.
	\end{claim}
%	Note that this means $\mathcal{R}_\mathcal{U}(\mathcal{C}'\setminus \{e_1,e_2\})$ is Eulerian. Also note that the parity of the number of endpoints of paths in $\mathcal{C}'\setminus\{e_1,e_2\}$ in $V_1$ and $V_2$ is the same as with $\mathcal{C}'$.
	\begin{proof}[Proof of claim]
	We first note that if $e \in E_{\mathcal{D}}(X,Y)$, then the effect of deleting $e$ from $\mathcal{D}$ is to keep all degrees of $R(\mathcal{D})$ unchanged except that the degrees of $X$ and $Y$ will increase or decrease by $1$. (Note that we only get a decrease by $1$ if $e$ is the first or last edge of a path in $\mathcal{D}$.) Therefore removing two edges of $E_{\mathcal{D}}(X,Y)$ from $\mathcal{D}$ preserves the parity of all vertices of $R(\mathcal{D})$.
	
	Next suppose that $R(\mathcal{D})$ is Eulerian (and hence connected).   Hence $R(\mathcal{D})$ is in fact $2$-edge connected\COMMENT{add the definition somewhere FS: this is the only place edge connectivity appears. Are we assuming the reader is familiar with Menger's theorem, but not edge connectivity? \textbf{resolved} - I added a reference to Diestel for any notation not covered} (since an Eulerian graph can be decomposed into cycles but	a cut edge cannot belong to a cycle). Therefore by Menger's theorem there are two edge-disjoint paths $Q_1$ and $Q_2$ between $X$ and $Y$ in $R(\mathcal{D})$. Given any three edges of $E_{\mathcal{D}}(X,Y)$, we can find two, say $e,f$, that miss either $Q_1$ or $Q_2$, say $Q_1$. 
	
	Let $P_e$ be the path of $\mathcal{D}$ containing $e$. 
	The effect on $R(\mathcal{D})$ of removing $e$ from $\mathcal{D}$ is to replace some edge $AB$ with two edges $AX, BY$.\footnote{It does not affect what follows, but strictly speaking, if $e$ is the first (resp.\ last) edge of $P_e$ then $AX$ (resp.\ $BY$) is a loop and is not present in $R(\mathcal{D} \setminus \{e\})$.} Therefore $A$ and $B$ are still connected in $R(\mathcal{D} \setminus \{e\})$ via the path $AXQ_1YB$. Similarly, deleting $f$ keeps the reduced graph connected. Therefore $R( \mathcal{D} \setminus \{e,f\})$ is connected with all degree parities preserved, so is Eulerian, i.e.\ $\mathcal{D}'= \mathcal{D} \setminus \{e,f\}$ is a connecting path system.
	\end{proof}
		
	We construct $\mathcal{C}'$ from $\mathcal{C}^*$ by iteratively applying the previous claim whenever possible. By construction $\mathcal{D}$ is a $\mathcal{U}$-connecting path system satisfying (a) and (b).
	%and contains no edges in any $U \in \mathcal{U}$ and at most $2$ edges between any $U_1, U_2 \in \mathcal{U}$; it immediately follows that $\mathcal{C}$ uses at most $2\binom{m}{2}$ edges.
\end{proof}

The next lemma will be useful in our algorithm for detecting graphs that do not have very long cycles. It essentially says that the absence of a $\mathcal{U}$-connecting path system implies the absence of a very long cycle.

\begin{lemma}
	\label{lem:shortConnecting}
	Let $G$ be a graph and $\mathcal{U}=\{U_1,\dots,U_m\}$ be a partition of $V(G)$. If there exists a cycle $K$ in $G$ that contains at least $r>2m$ vertices from each $U\in \mathcal{U}$, then there also exists a $\mathcal{U}$-connecting path system $\mathcal{C}$ with at most $m^2-m$ edges. Further, $\mathcal{C}$ contains at most two edges between any two $U_i,U_j\subseteq \mathcal{U}$.
\end{lemma}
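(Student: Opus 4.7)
The plan is to first use the cycle $K$ to construct some $\mathcal{U}$-connecting path system $\mathcal{C}_0 \subseteq G$ (without bounding its size), and then apply Lemma~\ref{le:redCsystem} to $\mathcal{C}_0$ to produce a $\mathcal{C}$ with at most two edges between any pair of parts. This automatically gives $|E(\mathcal{C})| \leq 2\binom{m}{2} = m^2 - m$.

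To construct $\mathcal{C}_0$, I would take it to be the subgraph of $G$ consisting of all \emph{cross-edges} of $K$, i.e.\ edges of $K$ whose endpoints lie in different parts of $\mathcal{U}$; if $K$ happens to consist entirely of cross-edges I discard one arbitrary edge so that $\mathcal{C}_0$ is a path system rather than a cycle. Each component (path) of $\mathcal{C}_0$ is then a maximal consecutive run of cross-edges along $K$, and two such paths that are adjacent in the cyclic order on $K$ are separated by a ``same-block'' consisting of consecutive $K$-vertices all lying in a common part $U$. From this I can deduce two facts about $R_0 := R_{\mathcal{U}}(\mathcal{C}_0)$: (i)~both adjacent paths have an endpoint-vertex in the separating part $U$, which cyclically links all non-isolated parts into a single component; and (ii)~the degree of each part $U$ in $R_0$ equals exactly twice the number of same-blocks in $U$, hence is even.

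The only obstruction to $\mathcal{C}_0$ being $\mathcal{U}$-connecting is therefore that some part $U_i$ might be isolated in $R_0$, which occurs precisely when every vertex of $U_i$ on $K$ is a single-vertex visit (both of its $K$-neighbours lie in other parts). For each such isolated $U_i$, I would modify $\mathcal{C}_0$ by removing two cross-edges $uv$ and $u'v'$ of $K$, where $v, v' \in U_i$ and $u, u' \in U_a$ lie in a common other part $U_a$, with $u \neq u'$ and $v \neq v'$. Such a choice exists by pigeonhole: $U_i$'s more than $2m$ single-vertex visits contribute more than $4m$ neighbour-slots distributed among the $m-1$ other parts, so some $U_a$ receives at least five slots and hence at least three distinct $U_i$-visits have a $K$-neighbour in $U_a$; since no vertex of $U_a$ has three $K$-neighbours in $U_i$, we can pick two such visits with distinct $U_a$-neighbours. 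A direct check shows this modification creates two new endpoints in each of $U_i$ and $U_a$ while preserving the even parity at all other parts, and it joins both $U_i$ and $U_a$ to the existing component of $R_0$.

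After iteratively processing all initially isolated parts (each modification uses at most four $K$-vertices and the slack $r > 2m$ allows the choices to be made disjointly), the resulting $\mathcal{C}_0$ is a $\mathcal{U}$-connecting path system in $G$, and applying Lemma~\ref{le:redCsystem} to it yields the desired $\mathcal{C}$. The main obstacle in this proof is the bookkeeping around the isolated-part case in $R_0$; the hypothesis $r > 2m$ is tailored precisely so that the pigeonhole argument above provides enough room to carry out the modifications.
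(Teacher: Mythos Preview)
Your overall strategy matches the paper's: build some $\mathcal{U}$-connecting path system $\mathcal{C}_0$ from $K$ and then invoke Lemma~\ref{le:redCsystem}. The construction of $\mathcal{C}_0$, however, has a genuine gap in the isolated-part step. You assert that removing $uv$ and $u'v'$ ``creates two new endpoints in each of $U_i$ and $U_a$'' and ``joins both $U_i$ and $U_a$ to the existing component of $R_0$.'' This is only guaranteed when $u$ and $u'$ are \emph{interior} vertices of paths in the current $\mathcal{C}_0$; if, say, $u$ is already a path endpoint (which happens exactly when $u$ is a boundary vertex of a same-block of $U_a$), then removing $uv$ deletes the endpoint $u$ rather than creating one. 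In particular, if $U_a$ has a unique same-block whose two boundary vertices are precisely the chosen $u$ and $u'$, your modification leaves $U_a$ with degree~$0$ in $R_0$ and hence disconnected---and your pigeonhole argument does not rule out this choice. (A secondary issue: when all edges of $K$ are cross-edges, discarding one arbitrary edge yields a single path whose endpoints lie in two \emph{different} parts, so $R_0$ immediately has two odd-degree vertices, contrary to your parity claim.) Both problems look repairable---for instance, one can insist on choosing $u,u'$ among interior $U_a$-vertices, which the slack in the pigeonhole does allow---but the fix and the subsequent iteration bookkeeping require more care than ``a direct check.''

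The paper avoids all of this by building $\mathcal{C}_0$ differently. It first extracts $m$ vertex-disjoint subpaths $P_1,\dots,P_m\subseteq K$ with both endpoints of $P_i$ lying in $U_i$; this is done by a short induction in which the hypothesis $r>2m$ guarantees enough untreated vertices remain at each step to find the next $P_k$. Then $\mathcal{C}^*$ is taken to be the $m$ complementary arcs of $K$ between consecutive $P_i$'s, so that $R_{\mathcal{U}}(\mathcal{C}^*)$ is directly a Hamilton cycle on $\mathcal{U}$: every part is hit and no parity or connectivity repair is needed. Lemma~\ref{le:redCsystem} is then applied exactly as you propose.
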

\begin{proof}
	
	We start by deleting edges from $K$ to form a path system $\mathcal{C}^*$ such that $R_{\mathcal{U}}(\mathcal{C}^*)$ is a Hamilton cycle on $\mathcal{U}$.	

\begin{claim}
There exist vertex-disjoint paths $P_1, \ldots, P_m \subseteq K$ such that the endpoints of $P_i$ are in $U_i$. 
\end{claim}
\begin{proof}[Proof of claim]
 Suppose, by induction, we have found vertex-disjoint paths $P_1, \ldots, P_{k-1}$ (with $k \leq m$) such that 
\begin{itemize}[noitemsep]
\item[(a)] each $P_i$ (with $i\leq k-1$) has its endpoints in $U_i$ (after relabelling of indices);
\item[(b)] $K \setminus (\cup_{i=1}^{k-1}V(P_i))$ is a union of paths that visits $U_i$ at least $r - (k-1)>m$ times for each $i \geq k$.
\end{itemize}
Any vertex of $\cup_{i=k}^m U_i$ is called \emph{untreated}.
We know that since $K$ is a cycle,  $K \setminus (\cup_{i=1}^{k-1}V(P_i))$ is a disjoint union of $k-1$ paths, which we denote by $Q_1, \ldots, Q_{k-1}$.  At least one of these paths, say $Q_1$ must contain at least $(r-k+1)(m-k+1)/(k-1) > m-k+1$ untreated vertices. Pick two untreated vertices $a, b \in V(Q_1)$ that are as close together as possible and belong to the same $U_j$ for some $j \geq k$. In particular, no two internal untreated vertices of $aQ_1b$ belong to the same $U_i$ and so $aQ_1b$ contains at most $ m-k+1 $ untreated vertices.  
%Therefore there are two untreated vertices $a, b \in V(Q_1)$ such that $a,b \in U_j$ for some $j\geq k$ and $aQ_1b$ contains at most $ m-k+1 $ untreated vertices, and where no two internal untreated vertices of $aQ_1b$ belong to the same $U_i$. 
Then we swap the indices of $U_j$ and $U_k$ and set $P_k = aQ_1b$. It is clear that (a) holds with $k-1$ replaced by $k$. Since, for each $i \geq k+1$, the path $P_k$ visits each $U_i$ at most once, part (b) also holds. (It is easy to see that a slight variant of the above argument allows us to pick the first path.)
\end{proof}

Let $\mathcal{C}^*$ be the set of non-trivial paths of $K \setminus \cup_{i=1}^m E(P_i)$; it is easy to see that $\mathcal{C}^*$ is a Hamilton cycle on $\mathcal{U}$ and so is a $\mathcal{U}$-connecting path system. Then, by the previous lemma applied to $\mathcal{C}^*$, there exists a $\mathcal{U}$-connecting path system $\mathcal{C}$ that has no edges inside any $U \in \mathcal{U}$ and that has at most $2$ edges between any distinct $U_i, U_j \in \mathcal{U}$ (and therefore has at most $m(m-1)$ edges).
	\end{proof}

The following lemma gives an algorithm for deciding whether a graph with vertex partition $\mathcal{V}$ has a $\mathcal{V}$-connecting path system.

\begin{lemma}
	\label{lem:connect}
	Let $G$ be a graph on $n$ vertices and $\mathcal{V}$ a partition of $V(G)$ with $|\mathcal{V}|=m$. There exists an algorithm that determines whether there exists a $\mathcal{V}$-connecting path system in $G$, and if one does, then the algorithm finds one with at most $m^2-m$ edges. This algorithm runs in time $m^{O(m^2)}+O(m^2 n^{5/2})$.
	%polynomial in $n$ (and double exponential in $m$).\COMMENT{VP: check running time. \textbf{resolved}}
\end{lemma}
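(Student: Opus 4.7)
The plan is to use Lemma~\ref{le:redCsystem} to restrict the search to a bounded-size class of path systems, enumerate their possible reduced structures, and test each for realizability in $G$. By Lemma~\ref{le:redCsystem}, if $G$ admits any $\mathcal{V}$-connecting path system, it admits one $\mathcal{P}$ containing no intra-component edges and at most two edges between any pair $V_i,V_j$, hence with $|E(\mathcal{P})|\leq m(m-1)=m^2-m$ edges. We search for such a $\mathcal{P}$.

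Define a \emph{template} as a pair $T=(R',\mathcal{W})$, where $R'$ is a loopless multigraph on $\mathcal{V}$ with multiplicity at most $2$ on each pair, and $\mathcal{W}=\{W_1,\ldots,W_r\}$ is a partition of $E(R')$ into edge-disjoint walks in which consecutive vertices differ; these will play the roles of $R'_{\mathcal{V}}(\mathcal{P})$ and the reduced walks of the paths in $\mathcal{P}$. Let $R_T$ denote the multigraph on $\mathcal{V}$ with one edge per walk $W_l$ joining its two endpoints; call $T$ \emph{valid} if $R_T$ is Eulerian. By Lemma~\ref{le:redCsystem} and the definition of $\mathcal{V}$-connecting, an acceptable $\mathcal{V}$-connecting path system exists in $G$ if and only if some valid template admits a \emph{realization}: vertex-disjoint paths $P_1,\ldots,P_r\subseteq G$ with each $P_l$ having reduced walk $W_l$.

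The number of templates is at most $3^{\binom{m}{2}}\cdot (m^2-m)!=m^{O(m^2)}$ (choices of $R'$ times orderings of its at most $m^2$ edges into walks), and each can be checked for validity in $m^{O(1)}$ time. For each valid template $T$, realizability is a bounded-size labeled substructure search in $G$: distinct vertices must be assigned to at most $m^2$ positions, each labeled with a component of $\mathcal{V}$, so that consecutive positions within any walk are joined by an edge of $G$. I would reduce this test to a bipartite matching problem on an auxiliary graph of size $O(n)$, solved by Hopcroft--Karp in $O(n^{5/2})$ time. A one-time preprocessing step--computing, for each pair $(V_i,V_j)$, the bipartite graph $G[V_i,V_j]$ together with succinct matching information about its configurations of at most two disjoint edges, in $O(m^2 n^{5/2})$ total time--should then reduce the per-template realizability check to a test depending only on $m$, yielding the claimed runtime $m^{O(m^2)}+O(m^2 n^{5/2})$.

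The hardest part is the realization check. Although each template is of size bounded in $m$, walks of length greater than one genuinely arise: for instance, when $|\mathcal{V}|=3$ and $G[V_1,V_3]=\emptyset$, the only nontrivial Eulerian $R_T$ is a triangle, forcing at least one walk of length $2$ (namely $V_1V_2V_3$) in any realization. The global vertex-disjointness constraint further couples the realizations of distinct walks. Formulating the realization test as a single bipartite matching problem that simultaneously enforces the walk-adjacency and all-different constraints--so that the $n$-dependent work can be front-loaded into the preprocessing--is the main technical challenge.
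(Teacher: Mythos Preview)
Your setup is fine: invoking Lemma~\ref{le:redCsystem} to restrict attention to path systems with no intra-part edges and at most two edges between each pair, then enumerating templates, is a reasonable reduction. The gap is that you never actually carry out the realization test. You say you ``would reduce this test to a bipartite matching problem'' and then immediately concede that formulating such a reduction is ``the main technical challenge''---this is precisely the part that needs to be proved, and it is not clear that any such reduction exists. The obstacle you yourself identify is real: edges along a single walk must \emph{share} vertices (the walk $V_1V_2V_3$ forces the $V_1V_2$-edge and the $V_2V_3$-edge to meet in $V_2$), while edges in different positions must be \emph{disjoint}. A bipartite matching encodes all-different constraints but not ``must-meet'' constraints, and the per-pair preprocessing you sketch only records information about independent pairs of edges in each $G[V_i,V_j]$, which says nothing about how edges across different pairs can be stitched together at shared endpoints. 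As it stands the proposal is a plan whose hard step is missing.

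The paper avoids the realization problem altogether by a different idea: rather than fixing a template and searching for compatible vertices, it \emph{preselects} a set $E_{i,j}\subseteq E_G(V_i,V_j)$ of size $O(m^2)$ for each pair (either a matching of size $4m$, or, failing that, edges incident to a small dominating set) and proves a replacement lemma showing that any reduced connecting path system can be edge-by-edge swapped into one living entirely inside $E'=\bigcup_{i<j}E_{i,j}$. Since $|E'|$ is bounded in $m$ alone, one then simply brute-forces over all choices of at most two edges from each $E_{i,j}$. The $n$-dependent work is exactly the matching computations needed to build the $E_{i,j}$, which is where the $O(m^2 n^{5/2})$ term comes from. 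The key point you are missing is this preselection-plus-replacement argument; once you have it, no template-realization test is needed at all.
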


%\begin{remark}
%	\label{rem:c}
%	We believe that the size of the $\mathcal{V}$-connecting path system can be improved to $2m-2$, but this seems difficult to prove.
%\end{remark}

\begin{proof}
	The algorithm proceeds by first preselecting a small number of plausible edges and then using brute force to find a connecting path system as a subset of these edges.
The preselected edges are chosen such that if a $\mathcal{V}$-connecting path system exists, then one exists amongst the preselected edges. 
	
%	For the first part, we find a set of edges $E_{i,j}$ for each pair $X_i,X_j$ of components in $\mathcal{V}$, such that $E_{i,j}\subseteq G[X_i,X_j]$. Then our set of plausible edges is $E'=\cup_{i<j}E_{i,j}$. To select $E_{i,j}$, we proceed as follows: 
		
	Assume $\mathcal{V} = \{V_1, \ldots, V_m\}$. For each $1 \leq i < j \leq m$, let $E_{i,j} \subseteq E_G(V_i,V_j)$ be defined as follows.
	If the bipartite graph $G[V_i,V_j]$ contains a matching of size $4m$, let $E_{i,j}$ be the edges in any such matching. If not then $G[V_i,V_j]$ has a dominating set $F_{i,j}$ of size at most $8m$ (taking the vertices incident to a maximum matching). For each vertex $v$ in $F_{i,j}$, select any set $E_{i,j}^v$ of $\min(d_{G[V_i,V_j]}(v),2m)$ edges incident to $v$ in $G[V_i,V_j]$ and take $E_{i,j} = \cup_{v \in F_{i,j}}E_{i,j}^v$. Finally our preselected edge set is defined to be $E' := \cup_{i<j}E_{i,j}$. 	 
	%For later use, we define $F:=\cup_{i<j}F_{i,j}$. 
		
	Next we show that if a $\mathcal{V}$-connecting path system $\mathcal{C}$ exists, then also a $\mathcal{V}$-connecting path system $\mathcal{D}\subseteq E'$ exists.
 By Lemma~\ref{le:redCsystem} we may assume that $\mathcal{C}$ has no edges inside any $V_i$ and has at most two edges between each pair $V_i, V_j$ (so in particular there are at most $2(m-1)$ edges of $E(\mathcal{C})$ incident with $V_i$ (and $V_j$)).\COMMENT{VP: previous lemma assumes a cycle. FS: resolved, reference pointed to wrong lemma}	

	 %We think of $R$ as reserved vertices, they will be part of an edge in $\mathcal{D}$.	

\begin{claim}
Let $\mathcal{C}$ be any $\mathcal{V}$-connecting path system as described above, i.e.\ $\mathcal{C}$ has no edges inside any $V_i$ and has at most two edges between each pair $V_i, V_j$. Then for any $e \in E(\mathcal{C})$, we can find $r(e) \in E'$ such that
\begin{itemize}
\item[(R1)] if $e$ has its endpoints in $V_i$ and $V_j$, then so does $r(e)$;
\item[(R2)] for all $f \in E(\mathcal{C}) \setminus \{ e \}$,  if  $e \cap f = \emptyset$, then $r(e) \cap f = \emptyset$.
\end{itemize}
\end{claim}

We will repeatedly apply this claim to replace edges $e \in \mathcal{C}$ with edges $r(e) \in E'$ to obtain $\mathcal{D}$.

\begin{proof}[Proof of claim.]

In order to find $r(e)$ satisfying (R1) and (R2), assume $e$ has endpoints in $V_i$ and $V_j$. If $e \in E'$ then set $R(e) = e$ and note that (R1) and (R2) clearly hold. If not, then we have two cases to consider.

	 If  $E_{i,j}$ is a matching of size $4m$ then at least one edge of $E_{i,j}$ is not incident with any edge in $E(\mathcal{C})$ (since there are at most $2(m-1)$ edges of $\mathcal{C}$ incident with any $V_i$) and this is the edge we choose as $r(e)$; clearly (R1) and (R2) hold in this case. 

	 If $E_{i,j}$ is not a matching of size $4m$, then $e$ is incident to some vertex  $v \in F_{i,j}$, so assume $e = vv'$ and that $v \in V_i$ and $v' \in V_j$. Since $e \not\in E_{i,j}$, then $E_{i,j}$ has $2m$ edges incident to $v$, and so there is at least one edge $vv^* \in E_{i,j}$  such that $v^*$ is not incident to any edge in $E(\mathcal{C})$ (again since there are at most $2(m-1)$ edges of $\mathcal{C}$ incident to $V_j$),
% \footnote{here we are using that any $V \in \mathcal{V}$ (so in particular $V_j$) is incident to at most $2(m-1)$ edges of $\mathcal{C}$ and so is incident to at most $2(m-1)$ replacement edges.}:  
and we choose $r(e) = vv^*$. Again (R1) and (R2) follow by construction.
\end{proof}

We now apply the above claim to $\mathcal{C}$, replacing each edge $e \in E(\mathcal{C})$ with $r(e)$ one at a time (each time updating $\mathcal{C}$ before the next application of the claim). Denote the resulting set of edges by $\mathcal{D}$.
Note that $E(\mathcal{D}) \subseteq E'$ and	
	 \begin{itemize}[noitemsep]
	 \item[(a)] if $e \in E(\mathcal{C})$ has its endpoints in $V_i$ and $V_j$, then so does $r(e) \in \mathcal{D}$;
	 \item[(b)] if $e,f \in \mathcal{C}$ are independent (i.e. $e \cap f = \emptyset$) then so are $r(e)$ and $r(f)$.
	 \end{itemize}
Here (b) holds because (R2) guarantees we never introduce any new incidences during the process of replacing edges.	 
	  
%	 Indeed, we can do this greedily as follows. We show that 
%By repeatedly applying this, it is easy to see that we obtain replacement edges satisfying (a) and (b) since (R2) ensures we never introduce any new incidences.	 
%	  Let $E(\mathcal{C}) \setminus E' = \{e_1, \ldots e_k \}$ and suppose that we have found replacements for all edges preceding $e$  satisfying (a) and (b), we now find a replacement for $e$. 

	   It is easy to see from (b) that $\mathcal{D}$ is a path system, and we now check that $\mathcal{D}$ is $\mathcal{V}$-connecting.  By (a) and (b), for any path $P \in \mathcal{C}$, the set of edges $\{r(e): e \in E(P)\}$ is a union of vertex-disjoint paths $P_1, \ldots, P_t$ with $P_i = a_iP_ib_i$ and $a_{i+1}$ and $b_i$ belong to the same $V \in \mathcal{V}$. Therefore each edge $e = VV' \in R(\mathcal{C})$ corresponds to a path from $V$ to $V'$ in $R(\mathcal{D})$ (with edges $e_1, \ldots, e_t$ corresponding to the paths $P_1, \ldots, P_t$). This shows that $R(\mathcal{D})$ can be obtained from $R(\mathcal{C})$ by replacing each edge with a path having the same endpoints as the edge: it is now clear that if $R(\mathcal{C})$ is Eulerian then so is $R(\mathcal{D})$ and so $\mathcal{D}$ is $\mathcal{V}$-connecting.
	 
	We have now shown that if a $\mathcal{V}$-connecting path system exists, then one exists inside $E'$ (and we have seen that it uses at most $2$ edges between each $V_i,V_j$, so at most $m^2 - m$ edges in total).
	For the algorithm to find such a path system, we first construct each $E_{i,j}$; the running time here is dominated in searching for a maximum matching in each $G[V_i,V_j]$, which takes total time $\binom{m}{2}n^{2.5}$ (using e.g.\ the Hopcroft-Karp algorithm \cite{hopcroft2karp}). We then check every possible way of selecting at most two edges from each $E_{i,j}$; since $E_{i,j}$ has size at most $(8m)(2m) = 16m^2$, there are $\left(\binom{16m^2}{2}   +16m^2+1\right)^{\binom{m}{2}} = m^{O(m^2)}$ possibilities. If a $\mathcal{V}$-connecting path system exists, then one of these possibilities will give us one and it takes time $m^{O(m^2)}+O(m^2 n^{2.5})$-time to determine this.
\end{proof}

The next lemma allows us to combine a connecting path system with a balancing path system into a path system that is connecting and almost-balancing.

\begin{lemma}
	\label{lem:combine}
Given a graph $G$ on $n$ vertices with a robust partition $\mathcal{V}=\{V_1,\dots,V_k,W_1,\dots,W_\ell\}$, a $\mathcal{V}$-balancing path system $\mathcal{B}$ and a $\mathcal{V}$-connecting path system $\mathcal{C}$, there exists a connecting, $(5|E(\mathcal{C})| + m-1)$-almost balancing path system $\mathcal{P}$, where $m:=k+\ell$ is the number of components in $\mathcal{V}$, and $\mathcal{P} \subseteq \mathcal{B} \cup \mathcal{C}$ (when thought of as sets of edges). Furthermore, $\mathcal{P}$ can be constructed in time polynomial in $n$. (Note that we suppress the parameters of the robust partition as they are irrelevant for this lemma.)
\end{lemma}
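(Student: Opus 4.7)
The plan is to take $\mathcal{P}$ to be a subset of $\mathcal{B}\cup\mathcal{C}$ obtained by keeping every edge of $\mathcal{C}$ and selectively removing edges of $\mathcal{B}$ in order to resolve degree conflicts, break cycles, and fix parities in the reduced multigraph. The central bookkeeping tool is the observation that every single edge addition or removal changes the total imbalance $\sum_j|\Delta_j(\mathcal{P})|$ by at most $2$: for any edge $e$ either both endpoints lie inside one of $A_j,B_j$ (contributing $\pm 2$ to $\Delta_j$), or the endpoints lie in two distinct bipartite components $W_j,W_k$ (contributing $\pm 1$ each to $\Delta_j,\Delta_k$), or $e$ crosses between $A_j$ and $B_j$ within the same $W_j$ or lies outside $\bigcup_j W_j$ (contributing $0$). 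Since $\mathcal{B}$ is already $\mathcal{V}$-balancing, the final imbalance of $\mathcal{P}$ is thus bounded by twice the number of edge modifications separating $\mathcal{B}$ from $\mathcal{P}$.

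First I would resolve the high-degree vertices of $H:=\mathcal{B}\cup\mathcal{C}$. Since $\mathcal{B}$ and $\mathcal{C}$ are each path systems, $\Delta(H)\leq 4$; only vertices incident to some edge of $\mathcal{C}$ can have degree exceeding $2$, and removing the excess $\mathcal{B}$-edges at these vertices uses at most $\sum_v\max\{0,d_\mathcal{B}(v)+d_\mathcal{C}(v)-2\}\leq 2|E(\mathcal{C})|$ $\mathcal{B}$-edges. The resulting subgraph has maximum degree $\leq 2$, so is a disjoint union of paths and cycles; every cycle must contain at least one $\mathcal{C}$-edge (as $\mathcal{B}$ itself is acyclic), so breaking each cycle by deleting a single $\mathcal{B}$-edge uses at most $|E(\mathcal{C})|$ further removals. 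At this point the surviving subgraph $\mathcal{P}_0$ is a path system containing $\mathcal{C}$.

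The delicate step, and the main obstacle, is to ensure $R_\mathcal{V}(\mathcal{P})$ is Eulerian. Note that $R_\mathcal{V}(\mathcal{C})$ is Eulerian by hypothesis, and connectedness of $R_\mathcal{V}(\mathcal{P}_0)$ is preserved by the $\mathcal{C}$-backbone, since every part still receives surviving path-endpoints coming from $\mathcal{C}$. However, attaching $\mathcal{B}$-edges to $\mathcal{C}$-paths can alter the endpoints and hence the degrees in the reduced multigraph. The plan is to repair parities by pairing up the (at most $m$) odd-degree parts of $R_\mathcal{V}(\mathcal{P}_0)$ and performing, for each pair, a parity-flipping $\mathcal{B}$-edge modification along a suitable path in a spanning tree of the reduced multigraph; this accounts for an additional contribution of at most $O(m)$ to the imbalance and is where the additive $m-1$ term in the claimed bound originates. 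Making this step rigorous --- showing that the required parity corrections can always be realized by modifying $\mathcal{B}$-edges while keeping $\mathcal{P}$ a valid path system --- is the main technical difficulty.

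Summing all modifications --- at most $|E(\mathcal{C})|$ additions from $\mathcal{C}$ and at most $2|E(\mathcal{C})|+|E(\mathcal{C})|+O(m)$ removals from $\mathcal{B}$ --- and multiplying by the per-modification imbalance bound of $2$ yields a total imbalance of at most $5|E(\mathcal{C})|+m-1$ after tight bookkeeping of the constants (exploiting in particular that many $\mathcal{C}$-edges and $\mathcal{B}$-edges contribute strictly less than $2$ to the sum, e.g.\ $A_j$-$B_j$ crossing edges contribute $0$). The entire procedure is manifestly polynomial-time, since each operation is either a local vertex-by-vertex pruning, a cycle-by-cycle break, or a constant-size spanning-tree manipulation on the reduced multigraph of order $m$.
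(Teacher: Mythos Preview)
Your approach has the right overall shape but contains a genuine gap and creates unnecessary complications that the paper's argument entirely avoids.

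The paper's key move is much simpler than what you attempt: rather than merely trimming degrees at vertices where $\mathcal{B}$ and $\mathcal{C}$ collide, it deletes from $\mathcal{B}$ \emph{every} edge sharing a vertex with $\mathcal{C}$ (at most $4|E(\mathcal{C})|$ edges, since each $\mathcal{C}$-edge meets at most four $\mathcal{B}$-edges), producing $\mathcal{B}^*$ which is \emph{vertex-disjoint} from $\mathcal{C}$. This single decision eliminates all of your difficulties at once: the union $\mathcal{B}'\cup\mathcal{C}$ is automatically a path system (no degree checking, no cycle breaking), and because the paths of $\mathcal{P}$ are precisely the paths of $\mathcal{B}'$ together with the paths of $\mathcal{C}$, one has $R_\mathcal{V}(\mathcal{P})=R_\mathcal{V}(\mathcal{B}')\sqcup R_\mathcal{V}(\mathcal{C})$ as a multigraph. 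Connectedness is then inherited directly from $R_\mathcal{V}(\mathcal{C})$, and even degrees follow by adding the even degrees of $R_\mathcal{V}(\mathcal{B}')$ and $R_\mathcal{V}(\mathcal{C})$.

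By contrast, in your construction $\mathcal{B}$-edges can extend $\mathcal{C}$-paths, so paths of $\mathcal{P}_0$ need not have the same endpoint-parts as paths of $\mathcal{C}$. Your claim that ``connectedness of $R_\mathcal{V}(\mathcal{P}_0)$ is preserved by the $\mathcal{C}$-backbone, since every part still receives surviving path-endpoints coming from $\mathcal{C}$'' is not justified: a $\mathcal{C}$-path from $U_1$ to $U_2$ can be extended at both ends by $\mathcal{B}$-edges into, say, $U_3$, contributing only a loop at $U_3$ to $R_\mathcal{V}(\mathcal{P}_0)$ and severing the $U_1$--$U_2$ connection in the reduced graph. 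More importantly, you explicitly concede that the parity-repair step is ``the main technical difficulty'' and only sketch a plan; you never show that the required parity flips can be realised by $\mathcal{B}$-edge modifications while keeping $\mathcal{P}$ a path system. The paper resolves this cleanly by fixing parities \emph{inside} $\mathcal{B}^*$ alone, via the reduced edge multigraph $R'_\mathcal{V}(\mathcal{B}^*)$: it finds a forest $Q'$ of at most $m-1$ edges whose removal makes all degrees in $R'_\mathcal{V}$ even, and uses that parities in $R_\mathcal{V}$ and $R'_\mathcal{V}$ agree for path systems.

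Finally, your imbalance bookkeeping is not rigorous. You correctly note that a single edge can change the total imbalance by up to $2$, then appeal to unspecified ``tight bookkeeping'' to recover the factor $5|E(\mathcal{C})|+m-1$. The paper obtains this bound simply by counting the $4|E(\mathcal{C})|+(m-1)$ deletions plus $|E(\mathcal{C})|$ additions separating $\mathcal{B}$ from $\mathcal{P}$.
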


\begin{proof}
	We begin by constructing $\mathcal{B}'\subseteq\mathcal{B}$ as follows: First delete any edge from $\mathcal{B}$ that shares a vertex with an edge from $\mathcal{C}$ to obtain $\mathcal{B}^*$. As each edge in $\mathcal{C}$ is incident to at most four edges in $\mathcal{B}$, we delete at most $4|E(\mathcal{C})|$ edges here. 
	%The following claim then shows how we can delete a small number of edges from $\mathcal{B}^*$ to ensure that the for the resulting path system $\mathcal{B}'$, every vertex of $R_{\mathcal{V}}(\mathcal{B}')$ is even.
	% number of leaves in $G$ in each component $U\in\mathcal{V}$ is even, resulting in $\mathcal{B}'$.
	\begin{claim}
		There exists $\mathcal{B}'\subseteq\mathcal{B}^*$ such that $|E(\mathcal{B}^*) \setminus E(\mathcal{B}')|\leq m-1$ and every vertex of $R_{\mathcal{V}}(\mathcal{B}')$ has even degree.
	\end{claim}
	\begin{proof}[Proof of claim]
		Consider a connected component $X$ of the multigraph $\mathcal{R}'_\mathcal{V}(\mathcal{B}^*)$. As in any graph, there are an even number of vertices with odd degree in $X$. For each component of $\mathcal{R}'_\mathcal{V}(\mathcal{B}^*)$, pair up these vertices arbitrarily and find  paths (not necessarily disjoint) between each pair within $ \mathcal{R}'_\mathcal{V}(\mathcal{B}^*)$ (which is possible since each pair belongs to the same connected component of  $\mathcal{R}'_\mathcal{V}(\mathcal{B}^*)$); call these paths $P_1, \ldots, P_t$. 
		%within connected component $\mathcal{R}'_\mathcal{V}(\mathcal{B}^*)$ and connect each to its partner with a path $P_i$ for $1\leq i\leq t$. 
		%These $P_i$ are paths in $\mathcal{R}'_\mathcal{V}(\mathcal{B}^*)$. 
		Set $Q=\triangle_{i=1}^tP_i$ as the symmetric difference of the edge sets of $P_1,\dots, P_t$. Note that removing all edges in $Q$ from $\mathcal{R}'_\mathcal{V}(\mathcal{B}^*)$ will result in a graph with even degree in each vertex. Next, construct $Q'$ from $Q$ by iteratively removing edges that form cycles, where we count a double edge as a cycle. Do this until no cycles remain, i.e.\ $Q'$ is a forest so has at most $m-1$ edges. Again, removing the edges in $Q'$ from $\mathcal{R}'_\mathcal{V}(\mathcal{B}^*)$ results in a graph with even degree in each vertex. The edges in $Q'$ correspond to edges in $\mathcal{B}^*$ that we delete to construct $\mathcal{B}'$, and so $\mathcal{R}'(\mathcal{B}')$ has even degree in every vertex. As the parity of each degree in $\mathcal{R}_\mathcal{V}(\mathcal{B}')$ and $\mathcal{R}'_\mathcal{V}(\mathcal{B}')$ are the same, $\mathcal{R}_\mathcal{V}(\mathcal{B}')$ has even degree in each vertex. 
	\end{proof}
	
%	Also note that $\mathcal{B}'$ has a $\mathcal{V}$-imbalance of at most $4||\mathcal{C}||+m-1$.
	We construct $\mathcal{P}$ as the union of $\mathcal{B}'$ and $\mathcal{C}$. Both $R_{\mathcal{V}}(\mathcal{B'})$ and $R_{\mathcal{V}}(\mathcal{C})$ have even degree for every vertex 
	 and so this also holds for $R_{\mathcal{V}}(\mathcal{P})$. Since $R_{\mathcal{V}}(\mathcal{C})$ is connected so is $R_{\mathcal{V}}(\mathcal{P})$ and so $\mathcal{R}_\mathcal{V}(\mathcal{P})$ is Eulerian, i.e.\ $\mathcal{P}$ is  $\mathcal{V}$-connecting. By construction $\mathcal{P}$ arises from $\mathcal{B}$ by at most $5|E(\mathcal{C})|+m-1$ additions or deletions of edges, each of which contributes at most 1 to the $\mathcal{V}$-imbalance of $\mathcal{P}$. It is straightforward to see that $\mathcal{P}$ can be constructed in time polynomial in $n$ given $G,\mathcal{B}, \mathcal{C}$.
\end{proof}

If we have a connecting, almost balancing path system (as provided by the previous lemma) with respect to a robust partition, then we can use Lemma~\ref{lem:pathsystemtocycle} to construct a very long cycle, as described below.

\begin{lemma}
	\label{lem:cycle} Let $0<1/n_0 \ll \rho\leq \gamma \ll \nu\ll \tau \leq \alpha<1$ and $t\leq \rho n$.
	There is an algorithm that, given an $n$-vertex, $D$-regular graph $G$ with $n \geq n_0$ and $D \geq \alpha n$ and a robust partition  $\mathcal{V}=\{V_1,\dots,V_k,W_1,\dots,W_\ell\}$ of $G$ with parameters $\rho,\nu,\tau,k,\ell$ and a $\mathcal{V}$-connecting $t$-almost balancing path system $\mathcal{P}$ with $|V(\mathcal{P})\cap V|\leq \gamma n$ for all $V\in\mathcal{V}$, constructs a cycle through all but at most $t$ vertices of $G$. It does this in time polynomial in $n$.
\end{lemma}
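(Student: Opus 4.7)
The plan is to convert the $t$-almost balancing condition into an exact balancing condition by removing at most $t$ vertices from the bipartite components $W_j$, obtaining a new weak robust subpartition $\mathcal{V}'$ on which we can invoke Lemma~\ref{lem:pathsystemtocycle}. The resulting cycle covers all vertices of $\bigcup \mathcal{V}'$, hence misses only the $\le t$ removed vertices.

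More concretely, for each $1\le j\le \ell$ write
\[
s_j := |A_j|-e_{\mathcal{P}}(A_j,\overline{A_j\cup B_j})-2e_{\mathcal{P}}(A_j), \qquad t_j := |B_j|-e_{\mathcal{P}}(B_j,\overline{A_j\cup B_j})-2e_{\mathcal{P}}(B_j),
\]
and set $k_j:=|s_j-t_j|$, so $\sum_j k_j\le t$ by hypothesis. Assume WLOG $s_j\ge t_j$. The key point is that removing a single vertex $v\in A_j\setminus V(\mathcal{P})$ decreases $|A_j|$ by $1$ and leaves every $e_{\mathcal{P}}(\cdot)$ term unchanged; hence it decreases $s_j$ by $1$ without affecting $t_j$ or any other $s_i,t_i$. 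Since $|A_j|\ge (\alpha-2\sqrt{\rho})n$ by Remark~\ref{rem1} and $|V(\mathcal{P})\cap A_j|\le \gamma n\ll \alpha n$, the set $A_j\setminus V(\mathcal{P})$ has at least $\tfrac12\alpha n$ vertices, far more than the $k_j\le t\le \rho n$ we need to remove. Pick arbitrary $k_j$ such vertices, set $A_j':=A_j\setminus\{\text{removed}\}$, $B_j':=B_j$, $W_j':=A_j'\cup B_j'$, $V_i':=V_i$, and let $\mathcal{V}':=\{V_1',\dots,V_k',W_1',\dots,W_\ell'\}$. By construction, $\mathcal{P}$ is $\mathcal{V}'$-balancing, and since no path endpoints are removed, $R_{\mathcal{V}'}(\mathcal{P})=R_{\mathcal{V}}(\mathcal{P})$ remains Eulerian, so $\mathcal{P}$ is still $\mathcal{V}'$-connecting.

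Next I would verify that $\mathcal{V}'$ is a weak robust subpartition of $G$ with slightly worse parameters, so that Lemma~\ref{lem:pathsystemtocycle} applies. Condition (D2$'$) is immediate for the $V_i$ since they are untouched. For the $W_j'$ condition (D3$'$), Lemma~\ref{lem:stillbiprobustexp}(i) applied with $\rho,\gamma$ in place of $\rho,\gamma$ shows $G[W_j']$ is a bipartite $(3\gamma,\nu/2,2\tau)$-robust expander component with bipartition $A_j',B_j'$. For the minimum-degree conditions (D4$'$), (D5$'$), start from the fact that by Lemma~\ref{lem:weakrobustsubp} the original $\mathcal{V}$ is a weak robust subpartition with some $\eta\le\alpha^2/2$, so $\delta(G[X])\ge\eta n$ and $\delta(G[A_j,B_j])\ge\eta n/2$; removing at most $\rho n$ vertices drops these by at most $\rho n\ll\eta$, so $\delta(G[X'])\ge\eta n/2$ and $\delta(G[A_j',B_j'])\ge\eta n/4$. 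Hence $\mathcal{V}'$ is a weak robust subpartition with parameters $(3\gamma,\nu/2,2\tau,\eta/4,k,\ell)$, and these parameters still lie in the hierarchy required by Lemma~\ref{lem:pathsystemtocycle} thanks to $\gamma\ll\nu\ll\tau\le\alpha$. The vertex-intersection bound also transfers: $|V(\mathcal{P})\cap X'|\le|V(\mathcal{P})\cap X|\le\gamma n\le 3\gamma n$.

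I then apply Lemma~\ref{lem:pathsystemtocycle} to $G$, $\mathcal{V}'$, $\mathcal{P}$ to obtain, in polynomial time, a cycle $C$ containing every vertex of $\bigcup_{X'\in\mathcal{V}'}X'=V(G)\setminus R$, where $R$ is the set of removed vertices, $|R|=\sum_j k_j\le t$. This is the desired cycle. The only mildly delicate points are (i) keeping track of which hierarchy the new parameters land in after the $3\gamma$ and $\nu/2$ blow-up and (ii) checking that removing vertices from the unbalanced side changes $s_j$ (and the minimum degrees) in precisely the way needed; both are routine once phrased as above, and the main "obstacle" is really just this bookkeeping. Algorithmically, steps (1)--(3) are obviously polynomial and step (4) inherits its running time from Lemma~\ref{lem:pathsystemtocycle}.
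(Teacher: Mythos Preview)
Your proposal is correct and follows essentially the same approach as the paper: remove at most $t$ vertices from the overfull side of each bipartite part $W_j$ (choosing them outside $V(\mathcal{P})$), check via Lemma~\ref{lem:stillbiprobustexp} and Lemma~\ref{lem:weakrobustsubp} that the resulting $\mathcal{V}'$ is still a weak robust subpartition with slightly relaxed parameters, and then apply Lemma~\ref{lem:pathsystemtocycle}. The paper's proof is slightly terser (it does not spell out the $s_j,t_j$ bookkeeping or the preservation of $\mathcal{V}'$-connectedness as explicitly as you do), but the logic is identical.
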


\begin{proof}

	We use Lemma~\ref{lem:weakrobustsubp} to see that $\mathcal{V}$ is also a weak robust subpartition with parameters $\rho,\nu,\tau,\eta,k,\ell$ where we set $\eta=\alpha^2/2$.
	
	For $1\leq j\leq \ell$, let $t_j$  be such that $\sum t_j =t$ and such that $\mathcal{P}$ is $t_j$-almost $(A_j,B_j)$-balancing, where $A_j,B_j$ is the bipartition corresponding to $W_j$. By selecting $t_j$ vertices $T_j$ from either $A_j\setminus V(\mathcal{P})$ or $B_j\setminus V(\mathcal{P})$, we can ensure that $\mathcal{P}$ is $(A_j\setminus T_j,B_j\setminus T_j)$-balancing. Set $T = \cup\, T_j$ so that $|T|=t\leq \rho n$ and define $\mathcal{V}'=\{V'_1,\dots,V'_k,W'_1,\dots,W'_\ell\}$ with $V'_i=V_i\setminus T = V_i$ and $W'_j=W_j\setminus T$ with $A_j\setminus T,B_j\setminus T$ as the bipartition of $W'_j$. 
	
	Next we show that $\mathcal{V}'$ is a weak robust subpartition of $G$ with parameters $3\gamma,\nu/2,2\tau,\alpha^2/4,k,\ell$. 
	
	First we apply Lemma~\ref{lem:stillbiprobustexp}(ii) to each $W_j$ with $W_j\setminus T$ playing the role of $U'$. As $|W_j\triangle W'_j|\leq \rho n \leq \gamma n$, we see that each $W_j$ is a bipartite $(3\gamma, \nu/2, 2\tau)$-robust expander component of $G$ (with bipartition $A_j\setminus T, B_j \setminus T$ by Lemma~\ref{lem:stillbiprobustexp}(i)). Clearly each $V_i' = V_i$ remains a $(\rho, \nu, \tau)$-robust expander component and so  is a $(3 \gamma, \nu/2,2 \tau)$-robust expander component as well. This shows that (D2$'$) and (D3$'$) hold. (D1$'$)  obviously holds, and as $|T|\leq \rho n$, it is easy to see that (D4$'$) and (D5$'$) also hold.
	
	To construct the desired cycle (i.e.\ one that contains every vertex of $V(G) \setminus T$), we apply Lemma~\ref{lem:pathsystemtocycle} with
	$G,3\gamma,\nu/2,2\tau,\alpha^2/4,n,k,\ell,\mathcal{V}',\mathcal{P}$ playing the roles of $G,\rho,\nu,\tau,\eta,n,k,\ell,\mathcal{V},\mathcal{P}$. We obtain a cycle $C$ that contains all vertices in $\cup_{X \in \mathcal{V}'}X = V(G) \setminus T$.	Moreover, this cycle can be found in time polynomial in $n$ since we can find $T$ in polynomial time and apply Lemma~\ref{lem:pathsystemtocycle} in polynomial time.
\end{proof}

Finally, we prove the main result, which we repeat here for convenience.
\begin{thm2}
	\label{th:result}
For every $\alpha\in(0,1]$,  there exists $c = c(\alpha) = 100 \alpha^{-2}$ and a (deterministic) polynomial-time algorithm that, given an $n$-vertex $D$-regular graph $G$  with $D\geq \alpha n$ as input, determines whether $G$ contains a cycle on at least $n - c$ vertices. 
Furthermore	there is a (randomised) polynomial-time algorithm to find such a cycle if it exists.	 
\end{thm2}

\begin{proof}

	We are given $\alpha$ in the statement of the theorem.
	We will choose non-decreasing functions $f_1,f_2,f_3,f_4: (0,1) \to (0,1)$ with $f_i(x)\leq x$ for all $x\in (0,1),i\in[4]$ as follows.
		Let $f_1$ be the function governing the hierarchy in the statement of Lemma~\ref{lem:cycle} 
and let $f_2$ be the function governing the hierarchy of Theorem~\ref{th:balance} 
		Define $f_3: (0,1) \to (0,1)$ as $f_3(x)=\min\{f_1(x),f_2(x), \alpha^2 x^2/100\}$.
	Applying Lemma~\ref{lem:cluster} with $f_3$ playing the role of $f$, let $f_4$ be the function we obtain (i.e.\ $f_4:=f'$) and note that $f_4(x) \leq f_3(x)$ for all $x \in (0,1)$.

	We define $\tau = f_4(\alpha)$ and apply Theorem~\ref{th:decompose} with $\tau,\alpha, f_4$ playing the roles of $\tau,\alpha,f$ to obtain a number $n_0 \in \mathbb{N}$. Define $c := 100\alpha^{-2}$. So far we have defined $f_1, \ldots, f_4, \tau, \alpha, n_0, c$.
	
	Given an $n$-vertex $D$-regular graph $G$ with $D \geq \alpha n$, if $n \leq \max(n_0, 1000\alpha^{-3})$ we can use brute force to determine in polynomial time if there exists a cycle in $G$ on at least $n - c$ vertices. So we assume that $n \geq \max(n_0, 1000\alpha^{-3})$.
	
	By applying Theorem~\ref{th:decompose} to $G$ (with $\tau, \alpha, n_0$ as above and $f=f_4$), we obtain a robust partition $\mathcal{V}$ of $G$ with parameters $\rho, \nu, \tau, k, \ell$ satisfying
\begin{equation}
\label{eq:hier1}
1/n_0 \ll_{f_4} \rho \ll_{f_4} \nu \leq \tau \ll_{f_4} \alpha.
\end{equation}
Set $m:= k+ \ell = |\mathcal{V}|$ and note that $m \leq (1 + \rho^{1/3})/ \alpha \leq 2 \alpha^{-1}$. 

We claim that $G$ contains a cycle with at least $n - c$ vertices if and only if $G$ has a $\mathcal{V}$-connecting path system. The claim proves the first part of the Theorem because, by applying the algorithm of Lemma~\ref{lem:connect}, we can determine in time polynomial in $n$ whether $G$ has a $\mathcal{V}$-connecting path system
(and if it does, we can find one in time polynomial in $n$ with at most $m^2$ edges).	

So let us prove the claim. First assume $G$ has no $\mathcal{V}$-connecting path system. Then by Lemma~\ref{lem:shortConnecting}, for every cycle $K$ of $G$, there is some $U \in \mathcal{V}$ such that $K$ contains at most $2m$ vertices of $U$; in particular $K$ misses at least
\[
|U| - 2m \geq (\alpha - \sqrt{\rho})n - 2m \geq (\alpha/2)n - 2m \geq  c
\]
vertices, where the first inequality is by Proposition~\ref{Claim1}, the second since $\rho \ll_{f_4} \alpha$ with $f_4(x) \leq f_3(x) \leq x^2/4$, and the third by our choice of $n$ large and $c$.

Now suppose $G$ contains a $\mathcal{V}$-connecting path system. Then we know there exists a $\mathcal{V}$-connecting path system $\mathcal{P}$ with at most $m^2$ edges. By Lemma~\ref{lem:cluster} with $f_3, f_4$ playing the roles of $f, f'$ and using \eqref{eq:hier1}, we see that $\mathcal{V}$ is a clustering with parameters $\zeta,\delta,\gamma,\beta,\eta$ where 
\begin{equation}
\label{eq:hier2}
1/n \ll_{f_3} \rho \ll_{f_3} \eta \ll_{f_3} \beta \ll_{f_3} \gamma \ll_{f_3} \zeta \ll_{f_3} \nu \leq \tau \leq \delta \leq \alpha.
\end{equation}	
Set $\xi:=\gamma$. In particular 
$n, \eta, \beta, \gamma, \xi, \zeta, \delta$ satisfy the hierarchy needed to apply Theorem~\ref{th:balance} to $G$  (with $\mathcal{V}, \alpha$ playing the roles of $\mathcal{A}, c_{\min}$). 
Thus there exists $H \subseteq G$ that is $\mathcal{V}$-balancing (by part (a)) and such that $|V(H)| \leq \xi n = \gamma n$ (by part (c)). Now applying Lemma~\ref{lem:combine} with $G, \mathcal{V}, H, \mathcal{P}$ playing the roles of $G, \mathcal{V}, \mathcal{B}, \mathcal{C}$, there exists a $\mathcal{V}$-connecting, $r$-almost balancing path system $\mathcal{P}' \subseteq \mathcal{P} \cup H$ where $r \leq 5|E(\mathcal{P})| +m -1 \leq 5m^2 + m \leq c$ (hence $\mathcal{P}'$ is also $c$-almost balancing). Note that for each $U \in \mathcal{V}$, we have $|V(\mathcal{P}') \cap U| \leq |V(H) \cap U| + |V(\mathcal{P})| \leq \xi n + 2m^2 \leq 2 \xi n$. 	
By Lemma~\ref{lem:cycle} with $G, \mathcal{V}, \mathcal{P}', \rho, 2\xi, \nu, \tau, \alpha, c $ playing the role of $G, \mathcal{V}, \mathcal{P}, \rho, \gamma, \nu, \tau, \alpha, t $, we see there exists a cycle $C$ in $G$ with at least $n - c$ vertices. We note that the required hierarchy for applying Lemma~\ref{lem:cycle} follows from \eqref{eq:hier2} and our choice of $f_3$ and it is also easy to see that $c \leq \rho n$ (since $1/n \ll_{f_3} \rho$ and our choice of $f_3$). This proves the claim.

Finally, if our algorithm determines that there exists a cycle in $G$ with at least $n - c$ vertices then there is also a polynomial-time algorithm to construct such a cycle. Indeed repeating the argument above with the corresponding algorithms, in polynomial time we can construct $\mathcal{P}$ (Lemma~\ref{lem:connect}) and $H$ (Theorem~\ref{th:balance} and Remark~\ref{rem:balsmall}) and therefore also $\mathcal{P}'$ (Lemma~\ref{lem:combine}) and hence also $C$ (Lemma~\ref{lem:cycle}).
\end{proof}

\begin{remark}
\label{rem:runtime}
 	The algorithm in Theorem~\ref{th:result} (for determining the existence of the cycle) has a crude running time upper bound of $O(\alpha^{-2}n^3) + O(\alpha^{-4}n^{5/2}) + g(\alpha)$, for some function $g$. Indeed $O(\alpha^{-2}n^3)$ comes from the application of Theorem~\ref{th:decompose} and Lemma~\ref{lem:connect}. The contribution of $g(\alpha)$ comes from using brute force when $n \leq \max(n_0, 100\alpha^{-3})$ and the application of Lemma~\ref{lem:connect}. 
 	
 	We do not give an explicit running time for finding the desired cycle (when it exists) because this algorithm is based on other polynomial-time algorithms in the literature where no explicit running time bound was given.
\end{remark}

\section{Conclusion}
\label{sec:conc}

%\begin{remark}
%	Theorem~\ref{th:result} gives a polynomial time algorithm that, if a large cycle exists, succeeds in finding one with $p>\frac{3}{4}$. If it determines that a large cycle exists but fails to find one, the algorithm given by Theorem~\ref{th:balance} did not return a balancing path system. Note that a balancing path system always exists, so we can modify the the algorithm in Theorem~\ref{th:result} by repeatedly applying the algorithm given by Theorem~\ref{th:balance} in case of failure. As the algorithm from Theorem~\ref{th:balance} is expected to run a constant number of times, this modified algorithm always succeeds and has expected running time polynomial in n.
%\end{remark}

The most obvious question that arises from this work is whether we can take $c=0$ in Theorem~\ref{th:result}, i.e.\ whether the Hamilton cycle problem is polynomial-time solvable for dense, regular graphs. Our work shows that to answer this affirmatively, it is enough to give a polynomial-time algorithm to decide whether there exists a path system that is both $\mathcal{V}$-connecting and $\mathcal{V}$-balancing when given a dense regular graph together with a robust partition $\mathcal{V}$.
 
 One important aspect of Theorem~\ref{th:result} is that it shows that the circumference (the length of a longest cycle) of an $n$-vertex, $D$-regular graph $G$ with $D \geq \alpha n$ cannot take values between roughly $(1-\alpha)n$ and $n-c$, where $c=c(\alpha) = 100\alpha^{-2}$. For our algorithm, this gives some slack to play with. On the other hand, for the Hamiltonicity problem, there is no such slack: by an easy generalisation of the example of Jung~\cite{Jung} and Jackson-Li-Zhu~\cite{JacksonLiZhu} (see Figure~\ref{fig:ex2}) there are regular graphs of degree roughly $n/k$ whose circumference is $n-(k-3)$.
	\begin{figure}
		\centering
		\includegraphics[width=0.4\textwidth]{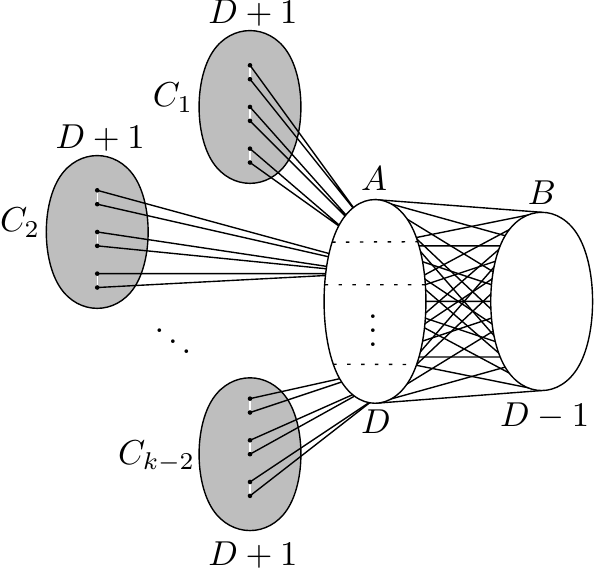}
		\caption{The graph $G$ above has $n = kD + k -3$ vertices (and we assume $k$ divides $D$ for simplicity). $A$ and $B$ are independent sets with all edges between them present. There are $D/k$ independent edges from $A$ to each $C_i$ so that these edges together form a matching. Then we delete a matching from each $C_i$ so that the resulting graph is $D$-regular.
				The graph has no cycle on $n- (k -4)$ vertices because deleting $D$ vertices from $G$ would then yield at most $D+ (k-4)$ components in $G$ (at most $D$ from the cycle and at most $k-4$ from the missed vertices), but deleting $A$ from $G$ yields $D + k -3$ components.}
		\label{fig:ex2}
	\end{figure}

If Hamiltonicity turns out to be NP-complete for dense, regular graphs then the question remains as to the smallest value of $c$ for which Theorem~\ref{th:result} holds. This may turn out to be closely related to the smallest $c$ for which the the circumference cannot take values between roughly $(1 - \alpha)n$ and $n-c$. It is also worth noting that the example in Figure~\ref{fig:ex2} has a large independent set  (roughly of size $\alpha n$) and one can in fact show that any non-Hamiltonian dense regular graph with long cycles (say of length at least $(1  - (\alpha/2))n$) must have a large independent set (of size at least $(\alpha - \varepsilon )n$).

Finally, we expect that the algorithm given in Theorem~\ref{th:result} can be modified to give an approximation algorithm for the longest path/cycle problems in dense regular graphs. The idea would be to search for (similarly to Lemma~\ref{lem:connect}) a connecting path system that maximises the number of vertices in the parts it connects together; write $S$ for this union of parts. We would then combine it with a balancing path system (guaranteed by Theorem~\ref{th:balance}) and use the resulting path system together with (a variant of) Lemma~\ref{lem:cycle} to produce a cycle passing through all but a fixed number $c$ of vertices in $S$. We should not expect any paths/cycles of length bigger than $|S|$ so this would give a $(1 - \frac{c}{n})$-approximation for the longest path/cycle. 
	
%	slightly to search for long paths instead of cycles by adapting the concept of a connecting path system and making some modifications to Lemma~\ref{lem:pathsystemtocycle}. In a similar way, modifying Lemma~\ref{lem:connect} in order to choose path systems that try to connect as many robust parts as possible gives $\left(\frac{n-c}{n}\right)$-approximation algrorithm for the longest path and longest cycle problems on $D$-regular graphs with $D\geq \alpha n$ and $c=c(\alpha)$.

%	$\bullet$ relaxations to almost-regular graphs?
%	$\bullet$ other problems that are hard on arbitrariy graphs but better in regular dense graphs
	
%open problems:
%			get a polynomial algorithm that finds the maximum cycle size reliably?
%			For a fixed α it looks like we can do this. 

\section*{Acknowledgements}
We would like to thank Allan Lo for helpful discussions.

\bibliographystyle{mybibstyle}
\bibliography{writeup}

\begin{thebibliography}{10}

\bibitem{alon1986eigenvalues}
N. Alon.
\newblock Eigenvalues and expanders.
\newblock {\em Combinatorica}, 6(2):83--96, 1986.

\bibitem{Arora}
S. Arora, D. Karger, and M. Karpinski.
\newblock Polynomial time approximation schemes for dense instances of
  {NP}-hard problems.
\newblock {\em J. Comput. System Sci.}, 58(1):193--210, 1999.

\bibitem{AroraSTOC}
S. Arora, D.~R. Karger, and M. Karpinski.
\newblock Polynomial time approximation schemes for dense instances of
  \emph{NP}-hard problems.
\newblock pages 284--293. {ACM}, 1995.

\bibitem{bollobasconj}
B. Bollob\'as.
\newblock {\em Extremal Graph Theory}.
\newblock Academic Press, 1978.

\bibitem{christofides2012finding}
D. Christofides, P. Keevash, D. K{\"u}hn, and D. Osthus.
\newblock Finding hamilton cycles in robustly expanding digraphs.
\newblock {\em J. Graph Algorithms Appl.}, 16(2):335--358, 2012.

\bibitem{Csaba2002Approx}
B. Csaba, P. Krysta, and M. Karpinski.
\newblock Approximability of dense and sparse instances of minimum
  2-connectivity, tsp and path problems.
\newblock {\em Proceedings of the Thirteenth Annual ACM-SIAM Symposium on
  Discrete Algorithms (SODA-02), ACM}, pages 74--83, 02 2002.

\bibitem{Rob2}
B. Csaba, D. K\"{u}hn, A. Lo, D. Osthus, and A. Treglown.
\newblock Proof of the 1-factorization and {H}amilton decomposition
  conjectures.
\newblock {\em Mem. Amer. Math. Soc.}, 244(1154):v+164, 2016.

\bibitem{de1999approximation}
W.~F. De~La~Vega and M. Karpinski.
\newblock On the approximation hardness of dense tsp and other path problems.
\newblock {\em Information Processing Letters}, 70(2):53--55, 1999.

\bibitem{diestel}
R. Diestel.
\newblock Graph theory.
\newblock {\em Graduate texts in Mathematics}, 173, 2016.

\bibitem{dirac1952some}
G.~A. Dirac.
\newblock Some theorems on abstract graphs.
\newblock {\em Proceedings of the London Mathematical Society}, 3(1):69--81,
  1952.

\bibitem{Garey}
M.~R. Garey and D.~S. Johnson.
\newblock {\em Computers and intractability}.
\newblock W. H. Freeman and Co., San Francisco, Calif., 1979.

\bibitem{garey1976planar}
M.~R. Garey, D.~S. Johnson, and R.~E. Tarjan.
\newblock The planar hamiltonian circuit problem is np-complete.
\newblock {\em SIAM Journal on Computing}, 5(4):704--714, 1976.

\bibitem{CyclePartitions}
V. Gruslys and S. Letzter.
\newblock Cycle partitions of regular graphs, 2018.
\newblock arXiv preprint arXiv:1808.00851.

\bibitem{hopcroft2karp}
J.~E. Hopcroft and R.~M. Karp.
\newblock An $n^{5/2}$ algorithm for maximum matchings in bipartite graphs.
\newblock {\em SIAM J. Comput.}, 2(4):225--231, 1973.

\bibitem{jackson1980hamilton}
B. Jackson.
\newblock Hamilton cycles in regular 2-connected graphs.
\newblock {\em Journal of Combinatorial Theory, Series B}, 29(1):27--46, 1980.

\bibitem{JacksonLiZhu}
B. Jackson, H. Li, and Y.~J. Zhu.
\newblock Dominating cycles in regular {$3$}-connected graphs.
\newblock {\em Discrete Math.}, 102(2):163--176, 1992.

\bibitem{Jung}
H.~A. Jung.
\newblock Longest circuits in {$3$}-connected graphs.
\newblock In {\em Finite and infinite sets, {V}ol. {I}, {II} ({E}ger, 1981)},
  volume~37 of {\em Colloq. Math. Soc. J\'{a}nos Bolyai}, pages 403--438.
  North-Holland, Amsterdam, 1984.

\bibitem{kuhn2014robust}
D. K{\"u}hn, A. Lo, D. Osthus, and K. Staden.
\newblock The robust component structure of dense regular graphs and
  applications.
\newblock {\em Proceedings of the London Mathematical Society}, 110(1):19--56,
  2014.

\bibitem{kuhn2016solution}
D. K{\"u}hn, A. Lo, D. Osthus, and K. Staden.
\newblock Solution to a problem of bollob{\'a}s and on hamilton cycles in
  regular graphs.
\newblock {\em Journal of Combinatorial Theory, Series B}, 121:85--145, 2016.

\bibitem{Rob4}
D. K\"{u}hn, R. Mycroft, and D. Osthus.
\newblock A proof of {S}umner's universal tournament conjecture for large
  tournaments.
\newblock {\em Proc. Lond. Math. Soc. (3)}, 102(4):731--766, 2011.

\bibitem{Rob0}
D. K\"{u}hn and D. Osthus.
\newblock Hamilton decompositions of regular expanders: a proof of {K}elly's
  conjecture for large tournaments.
\newblock {\em Adv. Math.}, 237:62--146, 2013.

\bibitem{Robb2}
D. K\"{u}hn and D. Osthus.
\newblock Hamilton decompositions of regular expanders: applications.
\newblock {\em J. Combin. Theory Ser. B}, 104:1--27, 2014.

\bibitem{trevisan2012max}
L. Trevisan.
\newblock Max cut and the smallest eigenvalue.
\newblock {\em SIAM Journal on Computing}, 41(6):1769--1786, 2012.

\bibitem{trevisanLectureNotes}
L. Trevisan.
\newblock Lecture notes cs294, 2016.
\newblock Available at
  {https://lucatrevisan.wordpress.com/2016/02/09/cheeger-type-inequalities-for-$\lambda$n/}.

\end{thebibliography}
\section*{Appendix}

\begin{proof}[Proof (of Lemma~\ref{lem:cluster})]
	We define $f^*,f':(0,1)\to(0,1)$ as $f^*(x) = \min\{x^2/4, f(x)\}$, and
	$f'(x) = f^*_{5}(x)$, where $f^{*}_5(x)$ denotes composing $f^*$ with itself five times. Note that $f^*(x)<x$ and $f^*(x) \leq f(x)$ for all $x \in (0,1)$, so (by induction) $f^*_{5}(x)<f(x)$ for all $x \in (0,1)$. 
%	The choice of $f^*$ ensures that the calculations below work out and the parameters $\rho,\eta,\beta,\gamma,\zeta,\nu$ are far enough apart. The bounds are not all tight, but we choose only a single $f^*$ for readability. \COMMENT{put some explanatory text here \textbf{resolved}}

	We choose $\zeta,\delta,\gamma,\beta,\eta$ such that $\delta=f^*(\alpha), \zeta=f^*(\nu), \gamma=f^*(\zeta),\beta=f^*(\gamma),\eta=f^*(\beta)$. Note that this also implies $\tau\leq f^*(\delta)$ and $\rho\leq f^*(\eta)$.
Writing $x \ll_{f^*} y$ to mean that $x \leq f^*(y)$, one easily checks that
\[
\rho  \ll_{f^*} \eta  \ll_{f^*} \beta \ll_{f^*} \gamma \ll_{f^*} \zeta \ll_{f^*} \nu \leq \tau \ll_{f^*} \delta \ll_{f^*} \alpha.
\]	
Furthermore (D6) implies $m := k+\ell \leq 2n/D \leq 2 \alpha^{-1}$ and so $D/m \geq \alpha n / m \geq \alpha^2 n / 2 \geq \delta n$.
	
	Property (a) follows from (D2), (D3) and $\rho\ll\eta$. Property (b) follows from (D4) and $\alpha/m \geq 2\alpha^2 \geq \delta$.	
	
	For property (c), let $X,Y$ be a non-trivial partition of $A_i$. We will show $e_G(X,Y)> \zeta |X||Y|$.
	
	 First we consider the case that $A_i$ is a robust expander component. Assume without loss of generality that $|X| \leq |Y|$. 	
	If $|X|<\tau|A_i|$, each vertex in $|X|$ sends at least $D/m-|X|$ edges to $|Y|$ by (D4). Then
	$\frac{D}{m} -|X| \geq  \delta n - \tau|A_i| \geq  \zeta n \geq  \zeta |Y|$, so $e_G(X,Y) \geq  \zeta|X||Y|$. 
	If $|X|\geq \tau|A_i|$, then since $|X|\leq|Y|$, we have $|X|\leq |A_i|/2\leq (1-\tau)|A_i|$. Therefore 	$|\RN_{\nu,A_i}(X)|\geq |X|+\nu|A_i|$, so $|\RN_{\nu,A_i}(X)\cap Y|\geq \nu|A_i|$, and so $e_G(X,Y)\geq \nu^2|A_i|^2 \geq \zeta |X||Y|$.
	
	Now consider the case that $A_i$ is a bipartite robust expander component with parts $U_1$, $U_2$. Let $X$ be such that $|X\cap U_1| \leq  |Y\cap U_1|$, so we also have $|X\cap U_1|\leq |U_1|/2$.
	
	If $|X\cap U_1|<\tau|U_1|$ and $|X\cap U_2|<\tau|U_1|$, we  have 
	\begin{align*}
		e_G(X\cap U_1,Y\cap U_2)\geq& |X\cap U_1|(D/2m - |X\cap U_2|)\\ \geq& |X\cap U_1|(\delta n/2-\tau|U_1|)\geq \zeta n|X\cap U_1|.
	\end{align*}
	By the same argument $e_G(Y\cap U_1,X\cap U_2)\geq \zeta n|X\cap U_2|$, and together they sum up to $e_G(X,Y)\geq \zeta |X||Y|$.

	If $|X\cap U_1|<\tau|U_1|$ and $|X\cap U_2|\geq\tau|U_1|$, we have $e_G(Y\cap U_1,X\cap U_2)\geq (D/2m - |X\cap U_1|)|X\cap U_2|\geq 2\zeta n|X\cap U_2|\geq \zeta n|X|\geq \zeta |X||Y|$.
	
	If $|X\cap U_1|\geq \tau|U_1|$, then since $|X \cap U_1| \leq |Y \cap U_1|$, we have that 
	\[
	\tau|U_1| \leq |X \cap U_1|, |Y \cap U_1| \leq (1 - \tau)|U_1|.
	\]
	Therefore (dropping subscripts in $\RN$), 
	\begin{align}
	|\RN(X\cap U_1)\cap U_2|+|\RN(Y\cap U_1)\cap U_2| 
	&\geq  |U_1|+2\nu |A_i| \notag \\
	&\geq |U_2| +2 \nu |A_i| - \rho n \notag \\
	&\geq |U_2| + \nu|A_i|, \label{eq:UX}
%	\text{but } \hspace{1 cm} |X\cap U_2| + |Y\cap U_2| = |U_2| &\leq |U_1|+\rho |A_i|.
	\end{align}
	using Proposition~\ref{Claim1}(i) and $\rho \ll \nu$ for the last inequality. 
	This implies that  $|\RN(X\cap U_1)\cap (Y\cap U_2)| > \nu |A_i|/2$ or  $|\RN(Y\cap U_1)\cap (X\cap U_2)| > \nu |A_i|/2$ since if both fail then we have
	\[
	|\RN(X\cap U_1) \cap U_2| < (\nu/2) |A_i| +|X \cap U_2|
	\text{ and }
	|\RN(Y\cap U_1) \cap U_2| < (\nu/2) |A_i| +|Y \cap U_2|,
	\] 
	 which when summed contradict \eqref{eq:UX}.
%	   $|\RN(X\cap U_1)\cap (Y\cap U_2)| > (\nu-\rho)|A_i|>\nu|A_i|/2$ or $|\RN(Y\cap U_1)\cap (X\cap U_2)| > (\nu-\rho)|A_i|>\nu|A_i|/2$.  
	   Without loss of generality, we assume $|\RN(X\cap U_1)\cap (Y\cap U_2)| > (\nu/2) |A_i|$, so that $e_G(X,Y)\geq  e_G(X\cap U_1,Y\cap U_2) \geq  \nu^2|A_i|^2/4\geq  \zeta |X||Y|$. 
	
	For property (d), if $A_i$ is a bipartite robust expander component with bipartition $U_1$, $U_2$ then 
	the number of non-$U_1$-$U_2$ edges is at most $e_G(U_1,\overline{U_2}) + e_G(U_2,\overline{U_1}) \leq  \rho n^2 \leq \beta n^2$, showing that $A_i$ is $\beta$-almost-bipartite with partition $U_1$, $U_2$. 
	If instead $A_i$ is a robust expander component, we claim that $A_i$ is $\gamma$-far from bipartite. Let $X,Y$ be a non-trivial partition with $|X| \leq |Y|$, so $|X|\leq |A_i|/2\leq (1-\tau)|A_i|$.
	If $|X|<\tau|A_i|$, then $e_G(X,Y)\leq |X|D$, so 
	\begin{align*}
	e(X)+e(Y)\geq (D/2m)|A_i| - D|X| \geq
	 \alpha n |A_i| ( (2m)^{-1} - \tau)
	  &\geq\ (\alpha^3 / 16) n^2 \\
	  &\geq \gamma |X||Y|,
	\end{align*}
	where the penultimate inequality follows since $|A_i| \geq \alpha n/2$ by (D3) and Remark~\ref{rem1}, and $m \leq k + 2\ell \leq 2\alpha^{-1}$ by (D6).
	If $|X|\geq \tau|A_i|$, then recalling $|X| \leq (1- \tau)|A_i|$, we also have $\tau |A_i| \leq |Y| \leq (1-\tau)|A_i|$ so $\RN_{\nu,A_i}(Y) \geq |Y| + \nu |A_i|$. Therefore, since  $|Y|\geq |A_i|/2$, we have  $|\RN(Y)\cap Y|\geq |Y| + \nu|A_i|$, so $e(Y)\geq  \nu^2|A_i|^2/2\geq \gamma |X||Y|$.	
\end{proof}
\end{document}